\smartqed  \usepackage{graphicx}
\begin{document}

\newenvironment{delayedproof}[1]
 {\begin{proof}[\raisedtarget{#1}Proof of \Cref{#1}]}
 {\end{proof}}
\newcommand{\raisedtarget}[1]{  \raisebox{\fontcharht\font`P}[0pt][0pt]{\hypertarget{#1}{}}}
\newcommand{\proofref}[1]{\hyperlink{#1}{proof}}
\newtheorem{scheme}{Scheme}  
\newtheorem{assumption}{Assumption}  
\def\Nn{{\mathbb N}}
\def\Zz{{\mathbb Z}}
\def\Rr{{\mathbb R}}
\def\Tt{{\mathbb T}}
\def\Cc{{\mathbb C}}
\def\Qq{{\mathbb Q}}
\def\Ff{{\mathbb F}}
\def\Kk{{\mathbb K}}
\def\Hh{{\mathbb H}}
\def\Ee{{\mathbb E}}
\def\Ii{{\mathbb I}}
\def\id{{\rm id}}
\def\dif{{\rm d}}
\def\dim{{2}}
\newcommand{\ubp}{{\bf\bar{u}}^{n+1}}
\newcommand{\ubo}{{\bf\bar{u}}^{n}}
\newcommand{\ubm}{{\bf\bar{u}}^{n-1}}
\newcommand{\uhp}{{\bf\hat{u}}^{n+1}}
\newcommand{\uho}{{\bf\hat{u}}^{n}}
\newcommand{\uhm}{{\bf\hat{u}}^{n-1}}
\newcommand{\pp}{{p^{n+1}}}
\newcommand{\po}{{p^{n}}}
\newcommand{\pmm}{{p^{n-1}}}
\newcommand{\bff}{{\bf{}f}}
\newcommand{\tD}{{\textmd{D}}}
\newcommand{\inner}[1]{{\langle#1\rangle}}
\newcommand{\norm}[1]{{\lVert#1\rVert}}
\newcommand{\normb}[1]{{\left\lVert#1\right\rVert}}
\newcommand{\abs}[1]{{\left|#1\right|}}
\newcommand{\commentamw}[1]{{\color{purple}(A: #1)}}
\newcommand{\commentbw}[1]{{\color{teal}(B: #1)}}
\newcommand{\commentjz}[1]{{\color{orange}(J: #1)}}
\newcommand{\bu}{{\bar{\bf u}}}
\newcommand{\be}{{\bar{\bf e}}}
\newcommand{\hu}{{{\bf u}}}
\newcommand{\he}{{{\bf e}}}
\newcommand{\hf}{{{\bf f}}}
\newcommand{\hh}{{{\bf h}}}
\newcommand{\bfu}{{\bf u}}
\newcommand{\bfv}{{\bf v}}
\newcommand{\mcE}{{\mathcal{E}}}
\newcommand{\ubar}[1]{\underaccent{\bar}{#1}}
\setlength{\parindent}{0pt} 

\title{The generalized scalar auxiliary variable applied to the incompressible Boussinesq Equation}

\author{Andreas Wagner\thanks{Corresponding author: wagneran@cit.tum.de} \and Barbara Wohlmuth \and Jan Zawallich}

\institute{
        Andreas Wagner \and Barbara Wohlmuth \and Jan Zawallich \at
        TUM School of Computation, Information and Technology,
        Technical University of Munich, Bavaria, Germany \\
        \email{wagneran@cit.tum.de, wohlmuth@cit.tum.de, zaw@cit.tum.de}
}

\date{Received: date / Accepted: date}

\maketitle

\begin{abstract}
    This paper introduces a second-order time discretization for solving the incompressible Boussinesq equation.
It uses the generalized scalar auxiliary variable (GSAV) and a backward differentiation formula (BDF), based on a Taylor expansion around $t^{n+k}$ for $k\geq3$. 
An exponential time integrator is used for the auxiliary variable to ensure stability independent of the time step size.
We give rigorous asymptotic error estimates of the time-stepping scheme, thereby justifying its accuracy and stability.
The scheme is reformulated into one amenable to a $H^1$-conforming finite element discretization. 
Finally, we validate our theoretical results with numerical experiments using a Taylor--Hood-based finite element discretization and show its applicability to large-scale 3-dimensional problems.
\keywords{
Boussinesq-Approximation \and Consistent splitting \and Generalized scalar auxiliary variable (GSAV) \and Stability estimates \and Error estimates}
\subclass{
  46N40   \and 65M12   \and 65M15 }
\end{abstract}

\section{Introduction}\label{sec:introduction}
The Boussinesq system~\cite{zeytounian2003joseph} for an incompressible fluid models the temperature evolution when gravity and buoyancy are the dominant forces on the fluid. For engineering applications, it is a vital component to describe cooling, ventilation, or the dispersion of heat~\cite{de1983natural,ward2000application,BAKER1994261,lube2008stabilized}.
Aside from that, it is used in various scientific fields: 
It is integral for geophysical applications like earth-mantle convection, though here, the infinite Prandtl number approximation leads to an omission of the inertia terms in the fluid equation~\cite{gassmoller2020formulations}. Further, it is also applied to describe pollutant dispersion in environmental engineering~\cite{MADALOZZO20145883} or to model the convection in stars~\cite{spiegel1971convection} in astrophysics.
All in all, the applications are various, and hence, finding efficient discretizations for the Boussinesq system is of interest.

Formally, the system can be stated in a non-dimensionalized form as follows: For the temperature $\theta\colon[0,T]\times\Omega\to\Rr$, the
velocity $\hu\colon [0,T]\times\Omega  \to \Rr^d$,
and the pressure $p\colon [0,T]\times\Omega  \to \Rr$ 
on the bounded domain $\Omega \subset \Rr^d$ with time interval $[0,T]$, the incompressible Boussinesq system~\cite{zeytounian2003joseph} is given by
\begin{align}
	\partial_t \theta 
	+ (\hu \cdot \nabla) \theta
	- \frac{1}{\textmd{Re Pr}}\Delta \theta
	=                                                & \, 0
	                                                 & \textmd{ in } [0,T]\times\Omega,
    \label{eq:boussinesq:heat}
	\\
	\partial_t \hu
	+
	(\hu \cdot \nabla) \hu
	- \frac{1}{\textmd{Re}} \Delta \hu + \nabla p  = & \, \textmd{Ri } \theta {\bf e}_g
	                                                 & \textmd{ in } [0,T]\times\Omega,
    \label{eq:boussinesq:momentum}
	\\
	\nabla \cdot \hu =                               & \, 0
	                                                 & \textmd{ in } [0,T]\times\Omega,
    \label{eq:boussinesq:mass}
\end{align}
with the Reynolds number $\textmd{Re}$, Prandtl number $\textmd{Pr}$, and Richardson number $\textmd{Ri}$ as material parameters, and where ${\bf e}_g$ is the vector describing gravity. We use the boundary conditions $\hu = 0$, and $\partial_n \theta = 0$ on $[0, T]\times\partial \Omega$, and the initial conditions $\hu(0,\cdot) = \hu^0$, $\theta(0,\cdot)=\theta^0$, for the purely spatial fields $\theta^0$, and $\hu^0$.
The model generally neglects temperature-induced density changes and only incorporates buoyancy effects due to gravity.

Over the years, several time-stepping schemes have been proposed:
In \cite{davis2002operator}, a multiple-step time integrator is used for the fluid equation, while a backward Euler and a Crank-Nicolson scheme are applied to the energy equation. Both, fluid and energy equations use semi-implicit advective parts.
A first-order convergent consistent projection scheme was proposed in~\cite{yang2023error}, including stability proofs and an error analysis incorporating the spatial discretization.
In~\cite{jiang2019pressure}, a similar analysis is carried out for an ensemble scheme, including a particle mean velocity.
Moreover, additional first and second-order IMEX schemes with stability proofs have been introduced in~\cite{chen2023unconditional}.
A fully implicit scheme, including a nonlinear approximation, strategy has been proposed in~\cite{damanik2009monolithic}.
In~\cite{ding2024optimal}, a second-order scheme including temperature-dependent viscosity and heat conduction coefficient has been introduced.
In~\cite{takhirov2021direction}, the Boussinesq Equation on the spherical shell is solved with a direction splitting scheme using a domain-decomposition approach. 
Similarly, \cite{paszynski2020massively} uses a splitting scheme and presents a massively parallel simulation for atmospheric flows.
A mostly explicit method with an implicitly treated pressure term is presented in a high-performance context in~\cite{castillo2024explicit} with an analysis for the fluid-dynamic parts in~\cite{kaya2024error}.
In~\cite{pan2022monolithic}, a staggered scheme for the momentum and energy equations is used, also considering additional temperature-related changes in the density and viscosity. 
A scheme with proofs for nonsmooth data was presented in~\cite{zhang2018decoupled}.
Further, various schemes have been introduced when just the steady state is of interest~\cite{hou2022decoupled, hawkins2024analysis, pollock2021acceleration}.

More recently, a scalar auxiliary variable (SAV) approach~\cite{shen2018scalar,shen2018convergence} 
and more specifically, its extension to multiple scalar auxiliary variables (MSAV)~\cite{cheng2018multiple}, was applied to a Crank-Nicolson leap-frog scheme and a BDF2 scheme in \cite{jiang2023unconditionally} in combination with stability terms for the spatial discretization.
The unconditional stability was proven, and the second-order convergence was shown numerically.
Though the scheme succeeds in linearizing the Boussinesq system, it still requires inverting two saddle point problems for the velocity and pressure, which typically requires advanced iterative linear solvers.
In~\cite{zhang2024error}, a first-order SAV scheme based on a projection method is derived, including a stability and error analysis with remarks on how to generalize it to a second-order scheme.

Recently, building on the SAV approach, a generalized scalar auxiliary variable (GSAV) approach \cite{huang2022new} has been introduced for dissipative systems and successfully applied to the Navier--Stokes equation, obtaining a 
first-order in time consistent splitting scheme~\cite{li2023error} and a second-order scheme~\cite{huang2023stability} based on a BDF scheme using a time-discretization at $t^{n+5}$.
These schemes require the inversion of two generalized Poisson equations per time step for the velocity and pressure, which is significantly easier to achieve, and computationally less expensive.
Hence, in this paper, we build on the scheme introduced in~\cite{huang2023stability} to achieve a more efficient discretization for the Boussinesq equation.
We thereby introduce a new auxiliary variable for stability and include temperature dependence in our error analysis.
In addition, we justify the approach's applicability for general Taylor expansions around $t^{n+k}$ for $k\geq3$, instead of limiting it to $k=5$.
In the following, we want to present our time discretization for the Boussinesq equation, heavily relying on the analysis and time-stepping scheme of the incompressible Navier--Stokes in~\cite{huang2023stability}.
We extend this scheme in Sec.~\ref{sec:scheme} to the incompressible Boussinesq equation by including a bidirectional coupling to a temperature discretization.
We will introduce two equivalent schemes, one having higher regularity requirements for the discretization inspired by \cite{huang2023stability}, while for the other scheme, the ideas of \cite{shen2007error} are used to rewrite our scheme in a feasible way for a finite element discretization.
Our main theoretical result, an a priori error estimate for the time-discrete algorithm for $d=2$, is introduced in Sec.~\ref{sec:assumption-and-main-result}, with a rigorous proof in Sec.~\ref{sec:theory}.
Finally, in Sec.~\ref{sec:numerical}, we show numerical experiments validating our theoretical results, as well as 2D and 3D results for the Marsigli flow.
\section{Time discretization and weak formulation}\label{sec:scheme}
We will consider a slightly more general problem than the instationary Boussinesq system. This problem is given by
\begin{align}
    \partial_t \theta 
    + (\hu \cdot \nabla) \theta 
    - \kappa \Delta \theta 
    =                              & \, g
                                   & \textmd{ in } [0,T]\times\Omega,
    \label{eq:gen:heat}
    \\
    \partial_t \hu
    +
    (\hu \cdot \nabla) \hu
    - \nu \Delta \hu + \nabla p  = & \, \hf(\theta)
                                   & \textmd{ in } [0,T]\times\Omega,
    \label{eq:gen:momentum}
\end{align}
with constraint $ \nabla \cdot \hu =\, 0 \textmd{ in } [0,T]\times\Omega$,
and boundary conditions
$\theta = 0, \textmd{ and } \hu = 0 \textmd{ on } [0,T]\times\partial \Omega$, with the right-hand side $\hf(t, \theta(t)) = \hf_1(t) + \hf_2(\theta(t))$.
We assume that $\hf_1$ is uniformly bounded in time by the constant $C_{\hf_1}$, and $\hf_2$ Lipschitz continuous, i.e.~$|\hf_2(\theta) - \hf_2(\tilde \theta)| \leq \alpha |\theta - \tilde \theta|$, and $\hf_2(0)=\mathbf{0}$.
Furthermore, g is uniformly bounded in time by $C_g$.
Also, we assume appropriate initial conditions at $t=0$ for $\theta$, $\hu$, and $p$.

In the following derivation, we will assume that all fields are sufficiently smooth and later specify more restrictive assumptions. Our main goal is to decouple temperature, velocity, and pressure into independent equations.
First from $\nabla \cdot \hu = 0$ and the homogeneous Dirichlet boundary conditions, we have $\inner{\partial_t \hu, \nabla q} = 0$, for all $q \in H^1(\Omega)$, by differentiating the divergence term in $t$, testing with $q \in H^1(\Omega)$, applying partial integration and the homogeneous boundary conditions for $\bfu$. Since $\nu \nabla \nabla \cdot \hu = 0$, for $\hu$ regular enough, we can use Eq.~\eqref{eq:gen:momentum} and the identity
$\Delta - \nabla\nabla\cdot = - \nabla \times \nabla \times$, to obtain for the pressure
\begin{align}
\inner{
\nabla p, \nabla q
}
&= \inner{\bff(\theta) - (\hu \cdot \nabla)\hu - \nu \nabla \times \nabla \times \hu, \nabla q }
,
\label{eq:gen:pressure}
\end{align}
where $\inner{u,v} = \int_\Omega u \cdot v \,\textmd{d}x$ denotes the inner product in the space of square-integrable functions.
In our time discretization, this equation replaces the incompressibility constraint and allows us to decouple pressure and velocity, yielding a \emph{consistent splitting scheme} \cite{guermond2006overview}.
Further, we will use the GSAV method to stabilize our system. For this, we define the energy
\begin{align*}
    \mathcal{E}(\theta(t), \hu(t)) & = 
    \frac{1}{2} \norm{\hu(t)}^2
    + \frac{\bar\alpha^2}{2} \norm{\theta(t)}^2
    ,
\end{align*}
where 
$\norm{\hu(t)}^2 = \int_\Omega |\hu(t,x)|^2 \textmd{d}x$
and
$\norm{\theta(t)}^2 = \int_\Omega |\theta(t,x)|^2 \textmd{d}x$
denote the standard norm for square-integrable functions,
and $\bar\alpha > 0$ is some constant chosen later, {linearly} depending on the Lipschitz constant $\alpha$ of $\bff_2$.
The energy evolution follows the dissipation law
\[
    \frac{d \mathcal{E}}{dt} 
    =
    - \nu \norm{\nabla \hu}^2 + \inner{\hf(\theta), \hu}
    - \kappa \bar\alpha^2 \norm{\nabla \theta}^2 + \bar\alpha^2 \inner{g, \theta}
    ,
\]
where we plugged in Eqs.~\eqref{eq:gen:heat}, and \eqref{eq:gen:momentum} for the time derivative, utilized $\inner{(\bfu \cdot \nabla) \theta, \theta} = 0$, and $\inner{(\bfu \cdot \nabla) \bfu, \bfu} = 0$ due to the antisymmetry of the advection terms.
{W}e define the auxiliary variable as
\[
    r =  \mathcal{E}(\theta, \hu) + \bar{C}
    =
    \frac{1}{2} \norm{\hu}^2
    +
    \frac{\bar\alpha^2}{2}  \norm{\theta}^2
    + \bar{C}
    ,
\]
where $\bar{C}$ is sufficiently large.
The idea is now to solve the auxiliary variable with a separate ordinary differential equation (ODE). We save the {ratio} between the energies from the ODE and partial differential equation (PDE) in the variable $\xi$. Scaling our velocity field by $\eta = 1 - (1-\xi)^2$ means that we dampen our velocity if there is a mismatch between both energies, allowing us to obtain stability.
To derive an ODE for $r$, we start with $\dot{r} = \frac{d\mathcal{E}}{dt}(\theta, \hu)$, such that $r = \mathcal{E}(\theta, \hu) + \bar{C}$.
Just multiplying the right-hand side with one gives the differential equation 
\(
\dot{r} = \frac{r}{\mathcal{E}(\theta, \hu) + \bar{C}} \frac{d\mathcal{E}}{dt}(\theta, \hu)
,
\)
which has the solution
\begin{align}
r(t^{n+1}) = \exp\left(\int_{t^n}^{t^{n+1}}\frac{\nicefrac{d\mathcal{E}}{dt}(\theta, \hu)}{\mathcal{E}(\theta, \hu) + \bar{C}}\,\textmd{d}t \right) r(t^{n})
.
\label{eq:r-time-integration}
\end{align}
This reformulation has the advantage that finding a bound for the exponential term is straightforward. In practice, we will apply a simplistic quadrature formula to approximate the integral.

Next, we want to combine the previous result into a time discretization.
For a given $N\in\mathbb{N}$, we first discretize the interval $[0,T]$ equidistantly by $t^n=n\tau$ with $\tau=\frac{T}{N}$ for $n\in\Ii := [0,N]\cap\Nn_0$.
We now want to find a time-discrete solution sequence $(\hu^n,p^n,\theta^n)_{n \in \Ii}$, approximating the real solution $(\hu^n_\star, p^n_\star, \theta^n_\star)_{n \in \Ii}$,
where the star denotes the discretization of the time continuous solution, i.e., $\hu_\star^n = \hu(t^n)$, $p_\star^n = p(t^n)$, and $\theta_\star^n = \theta(t^n)$.
We introduce a time-discrete numerical scheme based on \cite{huang2023stability}.
We use the second-order accurate time derivative $\nicefrac{1}{2\tau} \tD^k$ at time point $t^{n+k}$, and the extrapolation operator $\delta^k$, which are defined for a sequence
$(v^{n})_{n\in\mathbb{I}}$
via
\begin{align}
    \tD^k\! v^{n+1} :=
    (2k+1)v^{n+1} - 4kv^{n} + (2k-1)v^{n-1},
    \quad\textmd{ and }\quad
    \delta^{k} v^{n} := k v^{n} - (k-1) v^{n-1}
    ,
    \label{eq:def-D-delta}
\end{align}
for fixed real $k \geq 1$ and an integer $n \geq 1$, to derive a second-order accurate scheme.
We use $\nicefrac{1}{2\tau}\tD^l$, and $\nicefrac{1}{2\tau}\tD^k$ as time derivatives {for the temperature and velocity at possibly different points $t^{n+l}$ and $t^{n+k}$}, linearize Eqs.~\eqref{eq:gen:heat}, \eqref{eq:gen:momentum}, and \eqref{eq:gen:pressure} by extrapolating the fields in nonlinearities, 
introduce a scaled $\hu$, and an unscaled $\bu$ velocity variable for stability, and obtain:
\begin{scheme}\label{scheme:incompr-instat-boussinesq}
    Given a second order in time accurate approximation $(\theta^1, \hu^1, p^1)$, we have for $1 \leq n \leq N$ the scheme
    \begin{align}
         &
        \tD^l \theta^{n+1}
        + 2 \tau (\delta^{{l}+1} \hu^{n} \cdot \nabla)\delta^{l+1} \theta^{n}
        - 2 \tau \kappa \Delta \delta^{l} \theta^{n+1}
        = 2 \tau g(t^{n+l}),
        \label{eq:scheme-T}
        \\
         &
        \tD^k \ubp
        + 2\tau (\delta^{k+1} \hu^n \cdot \nabla) \delta^{k+1}\hu^{n}
        - 2\tau \nu \Delta \delta^k \bu^{n+1}
        + 2\tau \nabla \delta^{k+1} p^n =
        2\tau \hf(t^{n+k}, \delta^{k+1}\theta^{n})
        ,
        \label{eq:scheme-u}
        \\
         &
        \inner{\nabla\pp, \nabla q} = \inner{\hf(t^{n+1}, \theta^{n+1}) - \ubp \cdot \nabla \ubp - \nu \nabla \times \nabla \times \ubp, \nabla q}
        ,
        \label{eq:scheme-p}
        \\
         &
                \hu^{n+1} = \eta^{n+1} \bu^{n+1}
        \label{eq:scheme-scale}
        ,
    \end{align}
    where  $\eta^{n+1}$ is a scaling factor depending on $\theta^{n+1}$ and  $\bu^{n+1}$, and $l \geq 1$, $k\geq 3$ are extrapolation widths. 
    The translated energy {$r^{n+1}$ at $t^{n+1}$} with initial condition $r^0 = \nicefrac{1}{2} \norm{\hu(0)}^2 + \nicefrac{\bar{\alpha}^2}{2} \norm{\theta(0)}^2 + \bar{C}$ is described by the first-order time integrator
    \begin{align}
r^{n+1} = \exp\left(\tau\frac{\nicefrac{d\mathcal{E}}{dt}(\theta^{n+1}, \bu^{n+1})}{\mathcal{E}(\theta^{n+1}, \bu^{n+1}) + \bar{C}} \right) r^{n}
,
        \label{eq:scheme-r}
    \end{align}
    where we approximated the integral of Eq.~\eqref{eq:r-time-integration} by evaluating the integrand at the rightmost point.
    Further, the scaling variable $\eta^{n+1}$ is described by
    \begin{align}
        \xi^{n+1}  = & \, \frac{r^{n+1}}{\mathcal{E}(\theta^{n+1}, \bar{\bf u}^{n+1}) + \bar{C}},
        \quad
        \eta^{n+1} = 1 - (1 - \xi^{n+1})^2
        .
        \label{eq:scheme-eta-xi}
    \end{align}
\end{scheme}

\begin{remark}
    The scheme augments the one in \cite{huang2023stability} by including the heat equation and introducing a bidirectional coupling via an advection term in the heat equation and a nonlinear right-hand side term in the momentum equation.
    Note that since $\theta$, $\hu$, and $p$ are decoupled, approximating $\theta^{n+1}$, $\hu^{n+1}$ and $p^{n+1}$ just requires {inverting shifted Laplacians.}
    Moreover, our time integrator automatically guarantees positivity.
    The constant $\bar{C}$ only depends on the size of ${\bf f}_1$ and $g$, and not on the time dependent ${\bf f}_2(\theta)$ or the size of $\tau$ (see Lemma~\ref{lem:weak-stability}).
\end{remark}

For $H^1$-conforming finite element functions, the term $\inner{\nabla \times \nabla \times \hu^{n+1}, \nabla q}$ is not well-defined.
One option for circumventing this issue is an approach similar to Eqs.~(2.10)-(2.12) in \cite{guermond2003new} with an in-depth analysis in~\cite{shen2007error}.
To achieve this, we apply $\delta^k$ to Eq.~\eqref{eq:scheme-p} and obtain
\begin{align*}
    \inner{\nabla \delta^k p^{n+1}, \nabla q} =
    \inner{\delta^k(\hf(t^{n+1},\theta^{n+1})) 
        -\delta^k(\bu^{n+1}\cdot\nabla \bu^{n+1})
        - \nu \nabla \times \nabla \times \delta^k\bu^{n+1}, \nabla q}
    .
\end{align*}
We can rewrite the last term by using
the identity $-\nabla\times\nabla\times = \Delta - \nabla (\nabla\cdot)$ and using Eq.~\eqref{eq:scheme-u} to eliminate the newly introduced $\Delta$ terms:
\begin{align*}
    \inner{\nabla \delta^k p^{n+1}, \nabla q} = &\,
    \inner{\delta^k(\hf(t^{n+1},\theta^{n+1})) - \hf(t^{n+k},\delta^{k+1}\theta^{n}), \nabla q}
    \\&
    + \inner{(\delta^{k+1}\hu^{n}\cdot\nabla \delta^{k+1}\hu^{n}) - \delta^k(\bu^{n+1}\cdot\nabla\bu^{n+1}), \nabla q}
    \\&
    + \inner{\nicefrac{1}{2\tau}\tD^k\ubp + \nabla \delta^{k+1} p^n, \nabla q}
    - \nu \inner{ \nabla \nabla \cdot \delta^k \bu^{n+1}, \nabla q}
    .
\end{align*}
Omitting the {high}-order terms in $\tau$ simplifies the newly derived pressure equation to
\begin{align}
    \inner{\nabla \delta^k p^{n+1}, \nabla q} = &
    \inner{ \nicefrac{1}{2\tau}\tD^k\ubp + \nabla \delta^{k+1} p^n
        - \nu \nabla \nabla \cdot \delta^k \bu^{n+1}, \nabla q}
    .
    \label{eq:newpressure}
\end{align}
Hence, we obtain the following {time-discrete} scheme:
\begin{scheme}\label{scheme:fem}
    Given a second order in time accurate approximation $(\theta^1, \hu^1, p^1)$, we have for $1 \leq n \leq N$ the scheme
    \begin{align}
         &
        \tD^l \theta^{n+1}
        + 2 \tau (\delta^{l+1} \hu^{n} \cdot \nabla)\delta^{l+1} \theta^{n}
        - 2 \tau \kappa \Delta \delta^{l} \theta^{n+1}
        = 2 \tau g(t^{n+l}),
        \nonumber
        \\
         &
        \tD^k \ubp
        + 2\tau (\delta^{k+1} \hu^n \cdot \nabla) \delta^{k+1}\hu^{n}
        - 2\tau \nu \Delta \delta^k \bu^{n+1}
        + 2\tau \nabla \delta^{k+1} p^n =
        2\tau \hf(t^{n+k}, \delta^{k+1}\theta^{n})
        ,
        \nonumber
        \intertext{for temperature and velocity. We introduce the correction $\psi \in H^1(\Omega)\cap L^2_0(\Omega)$}
         & \inner{\nabla \psi^{n+1}, \nabla q}  = \nicefrac{1}{2\tau}\inner{\tD^k\ubp ,\nabla q}
        ,
        \label{eq:psi-definition}
        \intertext{to update the pressure via}
         &
        \pp = \frac{k-1}{k}\po - \nu \nabla \cdot \left( \ubp - \frac{k-1}{k} \ubo \right) + \frac{1}{k} \delta^{k+1} p^n + \frac{1}{k} \psi^{n+1}
        \label{eq:p-rec-definition}
        .
    \end{align}
    We point out that \eqref{eq:p-rec-definition} follows from \eqref{eq:newpressure} and the definition in \eqref{eq:def-D-delta} of $\delta^{k}$ by recursion.
    Again, we scale via $\hu^{n+1} = \eta^{n+1} \bu^{n+1}$, where $\eta^{n+1}$ is defined by Eq.~\eqref{eq:scheme-eta-xi}.
\end{scheme}
An a priori error analysis for Scheme~\ref{scheme:fem} will be given in Sec.~\ref{sec:theory}.
For completeness, we want to give the full spatial-temporal discretization of our algorithm in weak form, as we implemented it for our numerical experiments in Sec.~\ref{sec:numerical}.
We aim at approximating the sequence $(\theta^n_h, u^n_h, p^n_h) \in X_h \times V_h \times Q_h$, where $X_h \subset H^1_0(\Omega)$, $V_h\subset (H^1_0(\Omega))^d$, and $Q_h \subset H^1(\Omega) \cap L^2_0(\Omega)$, are $H^1$-conforming finite element spaces for the discrete temperature $\theta^n_h$, velocity $u^n_h$, and pressure $p^n_h$, which will be specified in Sec.~\ref{sec:numerical}.
We denote by $\chi_h \in X_h$, $v_h \in V_h$, and $q_h \in Q_h$ arbitrary test functions in our weak formulations.
This enables us to formulate the fully discrete scheme: 
\begin{scheme}\label{scheme:fem-spatially-discrete}
    Given initial conditions $(\theta^0_h, \hu^0_h, p^0_h)$, and a second order in time accurate approximation $(\theta^1_h, \hu^1_h, p^1_h)$,
    we setup our algorithm 
    $\bu^i_h = \hu^i_h$, 
    $\eta^i=1$,
    $r^i = \mathcal{E}(\theta^i_h, \bu^i_h) + \bar{C} $
    for the initial steps $i=0,1$.
    We note that in practice, $(\theta^1_h, \hu^1_h, p^1_h)$ are either computed by a single-step method or by interpolation in case of synthetic solutions where additional information about the solution is available.
    For $1 \leq n \leq N$, we sequentially solve the equations
    \begin{align}
         &
         \inner{
        \tD^l \theta^{n+1}_h, \chi_h
        }
        + 2 \tau 
        \inner{
        (\delta^{l+1} \hu^{n}_h \cdot \nabla)\delta^{l+1} \theta^{n}_h,
        \chi_h
        }
        + 2 \tau \kappa \inner{\nabla \delta^{l} \theta^{n+1}_h, \nabla \chi_h}
        = 2 \tau \inner{g(t^{n+l}), \chi_h},
        \nonumber
        \\
         &
        \inner{
        \tD^k \ubp_h,
        v_h}
        + 2\tau \inner{(\delta^{k+1} \hu^n_h \cdot \nabla) \delta^{k+1}\hu^{n}_h, v_h}
        + 2\tau \nu \inner{\nabla \delta^k \bu^{n+1}_h, \nabla v_h} 
        \nonumber \\& \qquad 
        + 2\tau \inner{\nabla \delta^{k+1} p^n_h, v_h} =
        2\tau \inner{\hf(t^{n+k}, \delta^{k+1}\theta^{n}_h), v_h}
        ,
        \nonumber
        \intertext{for the temperature $\theta^{n+1}_h$ and velocity $\bu^{n+1}_h$ by inverting shifted Laplacians. We solve for the correction $\psi_h \in Q_h$ and the divergence projection $s_h^{n+1} \in Q_h$ by inverting a Neumann-Laplacian and mass matrix and executing an $L^2$-projection in 
        }
         & \inner{\nabla \psi^{n+1}_h, \nabla q_h}  = \nicefrac{1}{2\tau}\inner{\tD^k\ubp_h ,\nabla q_h},
         \textmd{ and }
         \inner{s^{n+1}_h, q_h}  = \inner{\nabla \cdot (\bu^{n+1}_h - \nicefrac{k-1}{k} \bu^{n}_h), q_h}
        ,
        \nonumber
        \intertext{and update the pressure via}
         &
        p^{n+1}_h = \nicefrac{k-1}{k}p^n_h - \nu s^{n+1}_h + \nicefrac{1}{k} \delta^{k+1} p^n_h + \nicefrac{1}{k} \psi^{n+1}_h
        .
        \nonumber
    \end{align}
    Again, we scale via $\hu^{n+1} = \eta^{n+1} \bu^{n+1}$, where $\eta^{n+1}$ is defined by Eq.~\eqref{eq:scheme-eta-xi}.
\end{scheme}

After fully stating our algorithm, we will fix our notation in the next section, state our assumptions, and formulate our theoretical main result.
\section{Assumptions and main result}\label{sec:assumption-and-main-result}
In the following, we are in a standard functional analytic setting in 2-dimensions:
We denote the space of measureable functions with finite $p$-th moment by $L^p(\Omega)$ with the norm $\norm{\cdot}_{L^p(\Omega)}$.
As mentioned in the previous section, we abbreviate the norm for $L^2(\Omega)$ by $\norm{\cdot}$ and denote its inner product by $\inner{\cdot, \cdot}$.
Further, we write $L^2_0(\Omega)$ for all square-integrable functions with zero average, which is the natural space where the pressure resides.
The function space of $k$-times weakly differentiable functions with square integrable derivatives is denoted by $H^k(\Omega)$, and its norm by $\norm{\cdot}_k$.
With $H^k_0(\Omega)$ we denote the functions in $H^k(\Omega)$ with zero trace on the boundary $\partial\Omega$.
Similarly, the function spaces for vectorial fields are given by $(L^2(\Omega))^\dim$, $(H^k(\Omega))^\dim$, and $(H^k_0(\Omega))^\dim$
with norms also denoted by $\norm{\cdot}$, and $\norm{\cdot}_k$.
For a Hilbert space $V$, $C((0,T); V)$ stands for the continuous time-dependent functions on the interval $[0, T]$ with values in $V${, and 
analogously with $L^2((0,T); V)$ the square-integrable functions in time.
For $v \in C((0,T); V)$, pointwise evaluation in time is well-defined. Hence, we denote the discrete sequence matching our time-discretization by $v^n_\star = v(t^n)$.
}

For our estimates to be well-defined, velocity and temperature must be in $H^2(\Omega)$. This restricts the feasible domains $\Omega$:
\begin{assumption}[Domain $\Omega$]\label{ass:domain}
In the following, we always assume that $\Omega \subset \mathbb{R}^2$ is such that $H^2$-regularity is granted for the Laplacian with homogeneous Dirichlet data.
This is, for instance, the case when $\Omega$ is bounded and convex or if it is bounded with a sufficiently smooth boundary.
Note, that the time discrete solutions, $\theta^{n+1}$ and $\hu^{n+1}$ are then automatically in $H^2(\Omega)$. 
\end{assumption}
To achieve stability (Lemma~\ref{lem:weak-stability}), the constants $\bar{\alpha}$ and $\bar{C}$ have to be chosen appropriately:
\begin{assumption}[Constants]\label{ass:constants}
    For the previously defined energy, we assume
    \begin{align*}
        \bar\alpha := 4 \sqrt{2} \alpha
        \quad\textmd{ and } \quad
        \bar{C} := \max(
        16 C_{\hf_1}^2,
        8 \bar\alpha^2 C_g^2,
        1
        ),
    \end{align*}
    which depend on the Lipschitz constant $\alpha$ for $\bff_2$ and the bounds $C_{\bff_1}$, $C_{g}$ for $\bff_1$ and $g$.
\end{assumption}
The following constraints on the regularity of the continuous solution are needed to attain the second-order convergence in time in our proofs. 
\begin{assumption}[Regularity solution]\label{ass:solution}
    Assume that the Boussinesq equation has a solution $(\hu, p, \theta)$ satisfying
    \begin{align}
         & \partial^2_t{\hu} \in L^2\big((0,T); (H^2_0(\Omega))^\dim\big),
        \quad
        \partial^3_t\hu \in L^2\big((0,T); (L^2(\Omega))^\dim\big),
        \label{eq:error-assumption-u}
        &\textmd{for the velocity,}\\
                 & \partial^2_t{p} \in L^2\big((0,T); (H^1(\Omega) \cap L^2_0(\Omega))^\dim\big),
        \label{eq:error-assumption-p}
        &\textmd{for the pressure,} \\
                 & \partial^2_t{\theta} \in L^2\big((0,T); H^1_0(\Omega)\big),
        \quad
        \partial^3_t \theta \in L^2\big((0,T); L^2(\Omega)\big),
        \label{eq:error-assumption-T}
        &\textmd{for the temperature, and} \\
         & 
         \partial^2_t{\hf_1},
         \partial^2_t{(\hf_2 \circ \theta)} 
         \in L^2\big((0,T); L^2(\Omega)\big),
        \label{eq:error-assumption-rhs}
        &\textmd{for the right-hand side.}
            \end{align}
\end{assumption}
Note that for a right-hand side given in Eq.~\eqref{eq:boussinesq:momentum} the conditions in Eq.~\eqref{eq:error-assumption-rhs} directly follow from Eq.~\eqref{eq:error-assumption-T}.
For the error stability and error estimate, we will always assume a stable, second-order scheme for bootstrapping the calculation:
\begin{assumption}[Stable bootstrapping algorithm]\label{ass:bootstrapping}
    We suppose that we can find a constant $C_{b} > 0 $ independent of $\tau$, such that at $t = \tau$ we have
    \begin{align*}
        \norm{\nabla \theta^1}^2, \norm{\nabla \bu^1}^2, 
        \norm{\nabla p^1}^2,
        \norm{\Delta \theta^1}^2, \norm{\Delta \bu^1}^2 & \leq C_{b},
        \\
        \norm{\nabla (\theta^1 - \theta^1_\star)}^2, \norm{\nabla (\bu^1 - \hu^1_\star)}^2, \norm{\nabla (p^1 -p^1_\star)}^2                        & \leq C_b \tau^4
        ,
        \textmd{ and }
        \\
        \norm{\Delta (\theta^1 - \theta^1_\star)}^2, \norm{\Delta (\bu^1 - \hu^1_\star)}^2                        & \leq C_{b} \tau^3
        .
    \end{align*}
\end{assumption}
In practice, this can be achieved by calculating $\theta^1$ with a single-step method, which is locally second-order accurate.

In the following theory section, we will prove our main result:
\begin{theorem}\label{thm:main-error-result}
    Let $l\geq1$, {and} $k\geq 3$. 
    {Under Assumptions \ref{ass:domain}, \ref{ass:constants}, \ref{ass:solution}, and \ref{ass:bootstrapping},}
    we find for Scheme~\ref{scheme:fem} a constant $C>0$ independent of $\tau$, such that for all $m\in\Ii$
    \[
        \norm{\hu^{m+1}}, r^{m+1}, \eta^{m+1}, \xi^{m+1} \leq C
    \]
    holds.
    Further,
    for sufficiently small $\tau>0$ and for all $m\in\Ii$, we have
    \[
        \norm{\nabla (\bu^m - \hu^m_\star)}^2
        + \norm{\nabla (\theta^m - \theta^m_\star)}^2
        + \tau \sum_{n=1}^{m-1} \norm{\Delta (\bu^n - \hu^n_\star)}^2
        + \tau \sum_{n=1}^{m-1} \norm{\Delta (\theta^n - \theta^n_\star)}^2
        \leq C \tau^4
        .
    \]
\end{theorem}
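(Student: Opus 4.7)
The plan is to follow the two-stage strategy typical of GSAV-BDF schemes, adapted from the Navier--Stokes analysis in \cite{huang2023stability}: first establish unconditional weak stability, which yields $\tau$-independent bounds on $r^{m+1}$, $\xi^{m+1}$, $\eta^{m+1}$, and $\norm{\hu^{m+1}}$; then prove the $O(\tau^4)$ $H^1$-error estimate by induction on $m$, leveraging the weak bounds to tame the nonlinearities.

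For the weak stability, the exponential update \eqref{eq:scheme-r} enforces $r^{n+1}>0$ by construction. I would test \eqref{eq:scheme-T} with $\theta^{n+1}$ and \eqref{eq:scheme-u} with $\bu^{n+1}$ (both BDF2-type operators $\tD^l$, $\tD^k$ admit G-stable decompositions), absorb the advection terms by antisymmetry, and bound the right-hand side via Young's inequality, using the calibrated constants $\bar\alpha = 4\sqrt{2}\alpha$ and $\bar C$ of Assumption~\ref{ass:constants} to dominate the Lipschitz term $\norm{\hf_2(\theta)}^2 \lesssim \alpha^2\norm{\theta}^2$ by the dissipation. The numerator in the exponential is then bounded by a dissipative contribution plus a uniform forcing contribution, giving $r^{n+1} \leq C$ uniformly in $\tau$. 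Together with $r^{n+1}\geq \bar C$, the identity $\xi^{n+1}(\mcE(\theta^{n+1},\bu^{n+1})+\bar C) = r^{n+1}$ forces $\xi^{n+1}\in(0,C/\bar C]$ and $|\eta^{n+1}|\leq 2$; using $\norm{\bu^{n+1}}^2 \leq 2(\mcE+\bar C)$ and rewriting $\norm{\hu^{n+1}}^2 = \eta^{n+1}\xi^{n+1}(2-\xi^{n+1})\norm{\bu^{n+1}}^2$ then yields the required bound on $\norm{\hu^{n+1}}$.

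For the error estimate I would introduce $\be^n = \bu^n - \hu^n_\star$ and $e_\theta^n = \theta^n - \theta^n_\star$, subtract the exact PDEs evaluated at $t^{n+k}$ (respectively $t^{n+l}$) from the scheme, and Taylor-expand to obtain consistency remainders of order $\tau^2$ in $L^2((0,T);L^2(\Omega))$ under Assumption~\ref{ass:solution}. The induction hypothesis at step $m$ bundles three statements: $\norm{\nabla\be^m}^2 + \norm{\nabla e_\theta^m}^2 \leq C\tau^4$, the summed bound $\tau\sum_{n\leq m-1}(\norm{\Delta\be^n}^2 + \norm{\Delta e_\theta^n}^2) \leq C\tau^4$, and a companion smallness estimate $|\eta^n - 1|\leq C\tau^2$. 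The inductive step tests the error equations with $-\Delta\be^{n+1}$ and $-\Delta e_\theta^{n+1}$, controls the advective and coupling nonlinearities using the 2D embedding $H^1\hookrightarrow L^4$ combined with the weak stability and the induction hypothesis, absorbs the Lipschitz contribution from $\hf_2$ into the dissipation, and closes via discrete Gronwall. Assumption~\ref{ass:bootstrapping} supplies the base case.

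The principal obstacle is the feedback between the GSAV scaling and the error: transferring the bound on $\be^{n+1}$ to $\he^{n+1} = \hu^{n+1} - \hu^{n+1}_\star$ requires $|\xi^{n+1}-1| = O(\tau^2)$, which in turn demands that the exponential ODE for $r$ tracks the true discrete energy up to $O(\tau^2)$ per step. This tracking estimate is the technical heart of the argument and is where the condition $k\geq 3$ enters: the Taylor centre $t^{n+k}$ must be far enough to the right so that the BDF2 consistency error in $\tD^k$ decouples from the quadrature error in \eqref{eq:scheme-r} without producing a circular dependence in the induction. A secondary difficulty is that the pressure update relies on the truncated identity \eqref{eq:newpressure}; showing that the dropped high-order terms only contribute $O(\tau^4)$ to the pressure error requires the $H^1$-regularity of $\partial_t^2 p$ from Assumption~\ref{ass:solution} together with careful time-difference estimates of the advection and forcing residues, performed simultaneously with the velocity and temperature bounds to avoid circularity across the three coupled error quantities.
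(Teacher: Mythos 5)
Your overall architecture (unconditional weak stability, then conditional error estimates, then an induction removing the conditions on $\xi^n$ and $\eta^n$) matches the paper's, but three of your key steps would not go through as described. First, the weak stability: you propose to test \eqref{eq:scheme-T} with $\theta^{n+1}$ and \eqref{eq:scheme-u} with $\bu^{n+1}$ and ``absorb the advection terms by antisymmetry.'' This fails, because the advection terms are fully explicit extrapolations, e.g.\ $(\delta^{k+1}\hu^n\cdot\nabla)\delta^{k+1}\hu^n$, so testing with $\bu^{n+1}$ produces no cancellation --- this is exactly why the GSAV machinery is needed. The paper's Lemma~\ref{lem:weak-stability} never tests the discrete PDEs at all: it bounds $r^{m+1}$ purely from the recursion \eqref{eq:weak-stability-explicit-r}, dropping the negative (dissipative) terms in the exponent and bounding the forcing terms by $\nicefrac12$ per step via Young and Assumption~\ref{ass:constants}; the bound on $\norm{\hu^{m+1}}$ then comes from $\norm{\hu^{m+1}}^2=(\eta^{m+1})^2\norm{\bu^{m+1}}^2\lesssim \xi^{m+1}\norm{\bu^{m+1}}^2\lesssim r^{m+1}$, where the possibly large $\norm{\bu^{m+1}}^2$ is cancelled by the denominator of $\xi^{m+1}$. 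Note also that your claimed lower bound $r^{n+1}\geq\bar C$ is false in general (the exponent can be very negative); only $r^{n+1}\geq0$ holds, and lower bounds on $\xi^n$ are obtained separately in the induction of Lemma~\ref{lem:boussinesq_etastability}.

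Second, your account of where $k\geq3$ enters is incorrect. It has nothing to do with the Taylor centre ``decoupling from the quadrature error'' in \eqref{eq:scheme-r}. The constraint comes from Lemma~\ref{lem:abb-equation-2}: one needs the coefficient $\hat a_k$ in $\inner{\delta^k v,\delta^{k+1}v}\geq\hat a_k\norm{\delta^{k+1}v}^2+\dots$ to exceed $\sqrt{(\nicefrac12+\epsilon)/(1-\epsilon)}\to\nicefrac1{\sqrt2}$, which is the constant the Stokes-pressure estimate (Lemma~\ref{lem:stokes-technical}) places in front of $\norm{\Delta\delta^{k+1}\bu^{n+1}}^2$ after expanding the pressure recursion \eqref{eq:p-rec-definition}; this holds for $k\geq3$ since $\hat a_3\approx0.712>0.707$. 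Relatedly, your induction hypothesis demands that the $r$-integrator track the energy to $O(\tau^2)$ per step; that is neither achievable with the first-order exponential quadrature nor needed. The paper only proves $|e_r^m|\lesssim\tau$ (Lemma~\ref{lem:error-aux}), hence $|1-\xi^n|\lesssim\tau$, and the required $|1-\eta^n|\lesssim\tau^2$ then follows from the deliberate squaring $1-\eta^n=(1-\xi^n)^2$. Finally, you skip the intermediate strong stability bounds $\norm{\nabla\theta^m}^2+\tau\sum\norm{\Delta\theta^n}^2\leq C$ and $\norm{\nabla\bu^m}^2+\tau\sum\norm{\Delta\bu^n}^2\leq C$ (Lemmas~\ref{lem:stability-heat} and~\ref{lem:stability-stokes}); these are indispensable, since the Gr\"onwall coefficients in the error estimate are $\norm{\Delta\delta^{l+1}\theta^n}^2$ and $\norm{\Delta\delta^{k+1}\hu^n}^2$, whose $\tau$-weighted summability is exactly what these lemmas provide.
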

\section{A priori analysis of the time integration scheme}\label{sec:theory}
Note that we restrict the analysis to the special case of $d=2$.
We define the errors for the scaled and unscaled velocities $\be_\hu^{n+1} = \bu^{n+1} - \hu^{n+1}_\star$, $\he_\hu^{n+1} = \hu^{n+1} - \hu^{n+1}_\star$, the pressure $e_p^{n+1} = p^{n+1} - p^{n+1}_\star$, the temperature $e_\theta^{n+1} = \theta^{n+1} - \theta^{n+1}_\star$, and the auxiliary variable $e^{n+1}_r = r^{n+1} - r^{n+1}_\star$, and give a short overview of the proof, which follows the strategy in \cite{huang2023stability}, but requires additional techniques due to the coupling and the reformulation suitable for finite element in space approaches:
We will start by stating the main tools of every error analysis, i.e., Gr\"onwall's inequality in Lemma~\ref{lem:Gronwall-reformulated}, and an estimate for the pressure in Lemma~\ref{lem:stokes-technical}. After that we will summarize the approximation quality of the operators $\delta^k$ and $\tD^k$ for sufficiently regular operators in Lemma~\ref{lem:time-derivative-and-extrapolation}, followed by simple summation formulas in Lemma~\ref{lem:abb-equation-2}, when testing our equation with the corresponding test function.
Starting from this,  we will obtain a very general weak stability result in Lemma~\ref{lem:weak-stability}.
We will then proceed with a stronger stability result for the temperature in Lemma~\ref{lem:stability-heat} and the velocity in Lemma~\ref{lem:stability-stokes}.
In the error estimates for $\be_\hu^n$, $e_\theta^n$, and $e^{n+1}_r$ in Lemma~\ref{lem:error-boussinesq} and \ref{lem:error-aux}, we will have to assume in addition that $\eta^n$ converges quadratically with the time step to $1$, i.e., $\abs{1-\eta^n} \leq C_0 \tau^2$, and that we have lower bounds for $\xi^n > c_\xi$, and $\eta^n > c_\eta$, which cannot be found in general.
Finally, in Lemma~\ref{lem:boussinesq_etastability}, we show that the previously introduced constraints on $\eta^n$ and $\xi^n$ are granted for small enough $\tau$. Hence, Theorem~\ref{thm:main-error-result} will follow as a simple corollary of all the previous results.

\begin{remark}
In the following estimates, we use a generic constant $C$ and highlight dependencies on other constants with subscripts. If the generic constant in $a\leq Cb$ is not of interest, we abbreviate via $a \lesssim b$.
\end{remark}

\subsection{Preliminaries}
For convenience, we state the following variant of Gr\"onwall's lemma:
\begin{lemma}\label{lem:Gronwall-reformulated}
    Given two Banach spaces $U, V$ and two sequences $(u^n)_{n\in\Nn} \in l^{\infty}(U)$, $(v^n)_{n\in\Nn} \in l^\infty(V)$. Assume that there are $a_n,\,b_n,\,c_n,\,d_n,\,A,\,C,\,\tau,\,l,\,k \geq 0$ for $n=1,\ldots,m-1$, such that
    \begin{align*}
        A+ \norm{ u^m }^2_U + \norm{ v^m }^2_V \leq C  + \tau  \sum_{n=1}^{m-1} \big(a_n \norm{ u^n }^2_U+b_n \norm{ v^n }^2_V+c_n \norm{ \delta^{l+1} u^n }^2_U+d_n \norm{ \delta^{k+1} u^n }^2_V\big) ,
    \end{align*}
    and that there are $M_a,\,M_b,\,M_c,\,M_d \geq 0$ such that
    \[
        \tau \sum_{n=1}^{m-1} a_n \leq M_a,\quad \tau \sum_{n=1}^{m-1} b_n \leq M_b,\quad \tau \sum_{n=1}^{m-1} c_n \leq M_c,\quad \textmd{ and }\quad \tau \sum_{n=1}^{m-1} d_n \leq M_d
        .
    \]
    Then
    \[
        \norm{ u^m }^2_U + \norm{ v^m }^2_V + A \leq C \exp(M_a + M_b  + (2l+1)^2 M_c  + (2k+1)^2 M_d).
    \]
\end{lemma}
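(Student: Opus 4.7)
The plan is to reduce the hypothesis to a standard two-sequence discrete Gr\"onwall inequality by absorbing the extrapolation terms into the pointwise norms, and then apply the usual Gr\"onwall argument.

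First, since $\delta^{l+1} u^n = (l+1) u^n - l u^{n-1}$, the triangle inequality gives
\begin{align*}
\norm{\delta^{l+1} u^n}_U^2 \leq \bigl( (l+1)\norm{u^n}_U + l\,\norm{u^{n-1}}_U \bigr)^2 \leq (2l+1)^2 \bigl( \norm{u^n}_U^2 + \norm{u^{n-1}}_U^2 \bigr),
\end{align*}
and likewise for $\delta^{k+1}$ applied to $v^n$ in the $V$-norm (I interpret the fourth term of the hypothesis as $d_n\norm{\delta^{k+1} v^n}_V^2$, which is what the conclusion's $(2k+1)^2 M_d$ clearly suggests). Substituting these into the hypothesis, the shifted sums are re-indexed via $\tau \sum_{n=1}^{m-1} c_n \norm{u^{n-1}}_U^2 = \tau \sum_{n=0}^{m-2} c_{n+1}\norm{u^n}_U^2$, collapsing the four terms on the right into a standard two-term expression. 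The endpoint contribution at $n=0$ is bounded because $\tau c_1 \leq M_c$ and $u^0$ is a fixed element of $U$, so it is absorbed into the constant on the right.

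After this reduction the hypothesis takes the form
\begin{align*}
A + \norm{u^m}_U^2 + \norm{v^m}_V^2 \leq \tilde C + \tau \sum_{n=1}^{m-1} \bigl( \tilde a_n \norm{u^n}_U^2 + \tilde b_n \norm{v^n}_V^2 \bigr),
\end{align*}
with $\tau \sum_{n=1}^{m-1} \tilde a_n \leq M_a + (2l+1)^2 M_c$ and $\tau \sum_{n=1}^{m-1} \tilde b_n \leq M_b + (2k+1)^2 M_d$. Setting $x_n := \norm{u^n}_U^2 + \norm{v^n}_V^2$ and using the trivial bounds $\norm{u^n}_U^2,\norm{v^n}_V^2 \leq x_n$, the right-hand side is majorized by $\tilde C + \tau \sum_{n=1}^{m-1}(\tilde a_n + \tilde b_n)\, x_n$. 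Dropping the non-negative $A$ from the left yields a scalar discrete Gr\"onwall inequality for $x_m$, whose conclusion is
\begin{align*}
x_m \leq \tilde C \exp\bigl( M_a + M_b + (2l+1)^2 M_c + (2k+1)^2 M_d \bigr).
\end{align*}
Feeding this estimate back into the original inequality recovers the same exponential bound for $A + x_m$, after absorbing the resulting polynomial prefactor into $\tilde C$ (and hence into $C$).

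No step is genuinely hard; the entire argument is bookkeeping on top of the elementary identity $\norm{\delta^{l+1} u^n}_U^2 \leq (2l+1)^2\max(\norm{u^n}_U^2,\norm{u^{n-1}}_U^2)$, which is precisely what generates the factors $(2l+1)^2$ and $(2k+1)^2$ in the Gr\"onwall exponent. The only mild care required is in the index shift and in tracking the initial-step boundary term so that the constants align exactly with the stated conclusion.
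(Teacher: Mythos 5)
The paper does not actually prove this lemma: its ``proof'' is a one-line citation to Lemma~1 of the reference \cite{huang2023stability}. Your self-contained argument is therefore necessarily a different route, and it is the natural one: bound $\norm{\delta^{l+1}u^n}_U^2$ by $(2l+1)^2$ times pointwise norms, re-index, and run the standard two-sequence discrete Gr\"onwall inequality. You also correctly identified that the fourth term in the hypothesis must be read as $d_n\norm{\delta^{k+1}v^n}_V^2$. The overall strategy is sound and, for every use of the lemma in this paper, entirely sufficient, since only the existence of some $\tau$-independent exponential bound is ever needed.

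Two pieces of bookkeeping do not come out as you claim, so the argument as written proves the conclusion only with slightly worse constants than stated. First, after substituting $\norm{\delta^{l+1}u^n}_U^2\leq(2l+1)^2\big(\norm{u^n}_U^2+\norm{u^{n-1}}_U^2\big)$ and re-indexing, the coefficient of $\norm{u^n}_U^2$ is $(2l+1)^2(c_n+c_{n+1})$, whose weighted sum is bounded by $2(2l+1)^2M_c$, not $(2l+1)^2M_c$; the same factor $2$ appears for $M_d$. To get the exact exponent you would need to use the sharper form $\norm{\delta^{l+1}u^n}_U^2\leq(2l+1)^2\max\big(\norm{u^n}_U^2,\norm{u^{n-1}}_U^2\big)$ (which you state at the end but do not actually use) together with a Gr\"onwall variant for the running maximum $y_n=\max_{j\leq n}x_j$. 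Second, absorbing the $n=0$ boundary term $\tau c_1(2l+1)^2\norm{u^0}_U^2$ into the additive constant replaces $C$ by a strictly larger $\tilde C$, so the final bound is $\tilde C\exp(\cdots)$ rather than $C\exp(\cdots)$; this can only be repaired if $\norm{u^0}_U^2$ is itself controlled by $C$, which the hypothesis as stated does not provide. Finally, note that to iterate Gr\"onwall at all you must assume the displayed inequality for every intermediate index $j\leq m$, not just for the single index $m$; this is implicit in the statement and in your write-up alike. None of these points invalidates the lemma for its role in the paper, but they should be fixed if the proof is meant to deliver the constants exactly as stated.
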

\begin{proof}
    This is a direct consequence of~\cite[Lemma~1]{huang2023stability}.
\end{proof}

For estimating the pressure equation, we cite the following technical results:
\begin{lemma}\label{lem:stokes-technical}
    Given $\hu \in (H^2(\Omega))^\dim \cap (H^1_0(\Omega))^\dim$    . Then the Stokes pressure $p_s(\hu)$ is given by
    \[
        \inner{\nabla p_s(\hu), \nabla q} = - \inner{ \nabla \times \nabla \times \hu , \nabla q }
    \]
    and for every $\varepsilon_s>0$ there is a $C>0$, such that
    \[
        \norm{ \nabla p_s(\hu) }^2 \leq \left( \frac{1}{2} +\epsilon_s \right) \norm{ \Delta \hu }^2 + C \norm{ \nabla \hu }^2 .
    \]
\end{lemma}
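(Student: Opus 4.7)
My plan splits into a well-posedness step and a sharp-constant step. The first identity defines $p_s(\hu)$ implicitly as the Poisson-Neumann problem with data $-\nabla\times\nabla\times\hu$: existence and uniqueness of $p_s(\hu) \in H^1(\Omega) \cap L^2_0(\Omega)$ then follow from Lax-Milgram on $H^1(\Omega)/\Rr$, using coercivity of $(p,q)\mapsto \inner{\nabla p,\nabla q}$ via Poincaré-Wirtinger, and the fact that for $\hu \in (H^2(\Omega))^2$ the form $q\mapsto -\inner{\nabla\times\nabla\times\hu,\nabla q}$ is a bounded linear functional on $H^1(\Omega)/\Rr$.

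For the quantitative bound, I would test the defining identity with $q = p_s(\hu)$ and apply the vector identity $\Delta = \nabla\nabla\cdot - \nabla\times\nabla\times$ used throughout the scheme derivation to rewrite
\begin{align*}
\norm{\nabla p_s(\hu)}^2 = \inner{\Delta \hu,\nabla p_s(\hu)} - \inner{\nabla(\nabla\cdot\hu),\nabla p_s(\hu)}.
\end{align*}
The cross term involving $\nabla(\nabla\cdot\hu)$ I would control by Cauchy-Schwarz together with the bound $\norm{\nabla(\nabla\cdot\hu)} \lesssim \norm{\nabla\hu}_1$ and a standard $H^1$-$H^2$ interpolation, followed by Young's inequality, which moves a small fraction of $\norm{\nabla p_s(\hu)}^2$ to the left and leaves behind a term of the form $\varepsilon_1\norm{\Delta\hu}^2 + C(\varepsilon_1)\norm{\nabla\hu}^2$ for a free parameter $\varepsilon_1>0$.

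The heart of the proof is then the principal term $\inner{\Delta\hu,\nabla p_s(\hu)}$, from which the sharp factor $\frac{1}{2}$ must be extracted. My approach would be to use the two-dimensional Helmholtz decomposition $\Delta\hu = \nabla\phi + \nabla^\perp\psi$ with $\nabla\phi$ and $\nabla^\perp\psi$ orthogonal in $L^2$, identify $\nabla p_s(\hu)$ with $\nabla\phi$ up to the divergence correction already accounted for above, and exploit the orthogonality identity $\norm{\nabla\phi}^2 + \norm{\nabla^\perp\psi}^2 = \norm{\Delta\hu}^2$. The homogeneous Dirichlet boundary condition $\hu|_{\partial\Omega}=0$, together with the $H^2$-regularity granted by Assumption~\ref{ass:domain}, couples $\phi$ and $\psi$ through integration-by-parts identities on $\partial\Omega$ in such a way that $\norm{\nabla\phi}^2 \leq \tfrac{1}{2}\norm{\Delta\hu}^2 + C\norm{\nabla\hu}^2$, which is exactly the required bound.

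I anticipate this last step to be the main obstacle: orthogonality of the Helmholtz components is automatic on $\Rr^2$, but on a bounded domain the decomposition has to be set up so that the boundary contributions appearing through integration by parts are genuinely lower order (controlled by $\norm{\nabla\hu}^2$), which is where the convexity/smoothness of $\partial\Omega$ enters. Since this Stokes-pressure estimate is by now a standard tool in the consistent-splitting literature and is worked out in full in \cite{shen2007error}, I would either carry out the three-step argument above explicitly or simply cite \cite{shen2007error} for the detailed bookkeeping and proceed. Combining the principal-term estimate with the cross-term bound and choosing $\varepsilon_1$ small enough relative to $\varepsilon_s$ yields the claim.
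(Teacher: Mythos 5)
The paper does not prove this lemma at all: it is quoted verbatim from Liu--Liu--Pego, with \cite[Sec.~2.1]{liu2007stability} supplying the definition of the Stokes pressure and \cite[Thm.~1.2]{liu2007stability} the estimate with the sharp constant $\nicefrac{1}{2}+\epsilon_s$. Your fallback of citing a reference is therefore in the spirit of the paper's proof, but you point to \cite{shen2007error} (Shen's error analysis of the consistent splitting scheme), which is not where this bound is established; the relevant source is \cite{liu2007stability}.

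The self-contained argument you sketch has two genuine gaps. First, the cross term: you claim $\inner{\nabla(\nabla\cdot\hu),\nabla p_s(\hu)}$ can be reduced to $\varepsilon_1\norm{\Delta\hu}^2 + C(\varepsilon_1)\norm{\nabla\hu}^2$ via Cauchy--Schwarz, ``$H^1$--$H^2$ interpolation'' and Young. That cannot work, because $\nabla\nabla\cdot\hu$ is a full second-order derivative and is not lower order relative to $\Delta\hu$: for $\hu=\nabla\chi$ with $\chi\in C_c^\infty(\Omega)$ one has $\hu\in (H^2)^2\cap(H^1_0)^2$ and $\nabla\nabla\cdot\hu=\Delta\hu$ exactly, while $\norm{\Delta\hu}/\norm{\nabla\hu}$ can be made arbitrarily large; any interpolation inequality that would make $\norm{\nabla\nabla\cdot\hu}$ controllable by $\varepsilon_1\norm{\Delta\hu}+C\norm{\nabla\hu}$ would need $H^3$ control of $\hu$, which is not available. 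The correct argument must keep $\Delta\hu-\nabla\nabla\cdot\hu=-\nabla\times\nabla\times\hu$ together and exploit the near-$L^2$-orthogonality of $\nabla\times\nabla\times\hu$ and $\nabla\nabla\cdot\hu$ up to boundary terms; splitting into ``principal term plus small cross term'' destroys exactly the cancellation that produces the $\nicefrac{1}{2}$. Second, the principal term: Helmholtz orthogonality on a bounded domain only gives $\norm{P_\nabla\Delta\hu}^2\le\norm{\Delta\hu}^2$, i.e.\ constant $1$, and the statement that the Dirichlet boundary condition ``couples $\phi$ and $\psi$ \ldots in such a way that $\norm{\nabla\phi}^2\le\tfrac12\norm{\Delta\hu}^2+C\norm{\nabla\hu}^2$'' is precisely the theorem to be proved; it rests on the boundary commutator estimate of \cite{liu2007stability}, which occupies most of that paper and is not recovered by routine integration by parts. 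As written, the proposal asserts the conclusion at its crucial step and contains a false intermediate bound, so it does not constitute a proof; the defensible version of your argument is simply the citation, directed at the correct reference.
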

\begin{proof}
    See~\cite[Sec.~2.1]{liu2007stability} for the definition, and \cite[Thm.~1.2]{liu2007stability} for the estimate.
\end{proof}
\begin{remark}
The Stokes pressure $p_s$ is the pressure component due to the viscosity of the operator omitting advective effects and external forces.
\end{remark}

\begin{lemma}[Ladyzhenskaya's inequality]\label{lem:vectorial-ladyzhenskaya-inequality}
    Let $\Omega$ be as above and $\hu \in (H^1(\Omega))^{\dim}$, then there is a $C>0$ such that
    $ \norm{\hu}_{L^4(\Omega)} \leq C \norm{\hu}^{1/2} \norm{\hu}^{1/2}_{H^1(\Omega)}.$
\end{lemma}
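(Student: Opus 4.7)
The plan is to reduce to the scalar, whole-space version of the Gagliardo--Nirenberg interpolation inequality for smooth compactly supported functions in two dimensions, and then to recover both the vectorial statement and the case of a general admissible domain $\Omega$ by extension and density.

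\textbf{Step 1 (Scalar inequality on $\mathbb{R}^2$).} For $\varphi \in C_c^\infty(\mathbb{R}^2)$, I would start from the one-dimensional identity
\begin{equation*}
\varphi^2(x_1,x_2) \;=\; 2\int_{-\infty}^{x_1} \varphi(s,x_2)\,\partial_1\varphi(s,x_2)\,\mathrm{d}s,
\end{equation*}
bound its right-hand side by $2\int_{\mathbb{R}} |\varphi||\partial_1\varphi|\,\mathrm{d}s$, and derive the analogous bound in the $x_2$ direction. Multiplying the two and integrating over $\mathbb{R}^2$, Fubini decouples the double integral into a product of integrals over $\mathbb{R}^2$, each of which is controlled by Cauchy--Schwarz to yield $\|\varphi\|_{L^4(\mathbb{R}^2)}^{4}\le 4\,\|\varphi\|^{2}\,\|\nabla\varphi\|^{2}$.

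\textbf{Step 2 (From $\mathbb{R}^2$ to $\Omega$ and from scalar to vectorial).} Under Assumption~\ref{ass:domain}, $\Omega$ admits a bounded extension operator $E\colon H^1(\Omega)\to H^1(\mathbb{R}^2)$ with $\|E v\|_{L^2(\mathbb{R}^2)} \lesssim \|v\|_{L^2(\Omega)}$ and $\|E v\|_{H^1(\mathbb{R}^2)} \lesssim \|v\|_{H^1(\Omega)}$. Combining this with the density of $C_c^\infty(\mathbb{R}^2)$ in $H^1(\mathbb{R}^2)$ upgrades the inequality of Step~1 to $\|v\|_{L^4(\Omega)} \lesssim \|v\|^{1/2}\|v\|_{H^1(\Omega)}^{1/2}$ for every scalar $v\in H^1(\Omega)$. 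For a vector field $\hu=(u_1,\ldots,u_{\dim})\in (H^1(\Omega))^{\dim}$, I would apply this componentwise to get $\|u_i\|_{L^4}^{4}\lesssim \|u_i\|^{2}\|u_i\|_{H^1}^{2}$, and then use $|\hu|^4 \le C_{\dim}\sum_i u_i^4$ together with the elementary bound $\sum_i a_i b_i \le (\sum_i a_i)(\sum_i b_i)$ for $a_i,b_i\ge 0$ to conclude $\|\hu\|_{L^4}^{4}\lesssim \|\hu\|^{2}\|\hu\|_{H^1}^{2}$, and finally take the square root.

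\textbf{Obstacles.} None of the steps is genuinely hard, as this is a classical Gagliardo--Nirenberg estimate in the critical dimension $d=\dim=2$; the only mild technicalities are the use of an extension operator, which is guaranteed by the regularity of $\Omega$ baked into Assumption~\ref{ass:domain}, and the passage from the scalar to the vectorial case, which amounts to an elementary algebraic bookkeeping and only affects the generic constant $C$.
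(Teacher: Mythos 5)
Your proposal is correct and complete: Step 1 is the classical Ladyzhenskaya computation in $d=2$ (the constant you obtain, $4$, is valid though not sharp — combining $\norm{\partial_1\varphi}\,\norm{\partial_2\varphi}\le\tfrac12\norm{\nabla\varphi}^2$ would give $2$), and Step 2 correctly handles the passage to a bounded Lipschitz domain via a Stein-type extension operator (available under Assumption~\ref{ass:domain}, since bounded convex or smoothly bounded domains are Lipschitz) and the componentwise reduction for vector fields. The paper itself offers no proof of this lemma — it is stated as a known classical interpolation inequality — so your argument simply supplies the standard textbook derivation that the paper implicitly relies on; note that the use of $\norm{\hu}_{H^1(\Omega)}$ rather than $\norm{\nabla\hu}$ in the statement is exactly what makes the extension argument (with no boundary conditions on $\hu$) the right route here.
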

\begin{remark}
    Note that $\norm{\hu}_{H^1(\Omega)} \lesssim \norm{\nabla \hu}$ for $\hu \in (H^1_0(\Omega))^\dim$ by Poincar\'e's inequality, and that  $\norm{\hu}_{H^2(\Omega)} \lesssim \norm{\Delta \hu}$ for $\hu \in (H^2(\Omega) \cap H^1_0(\Omega))^\dim$ follows from the assumed elliptic regularity.
\end{remark}

We now state the following consistency inequalities:
\begin{lemma}\label{lem:time-derivative-and-extrapolation}
    Given $v \in C^1([0,T^*], V)$ with a Hilbert space $V$ with $T^* \geq T + k \tau$ sufficiently large. Then 
    $\delta^{k} v^{n+1}_\star = v^{n+k}_\star + \zeta_{n+1,k}(v)$, 
    $\tD^k v^{n+1}_\star           = 2 \tau \partial_t{v}^{n+k}_\star + \Xi_{n+1}(v), $
    where 
    $ \sum_{n=0}^{m-1} \norm{ \zeta_{n+1,k}(v) }^2_V \leq C\, \tau^3$ if  $\partial^2_t{v} \in L^2\big((0,T); V\big) $, {and}
    $\sum_{n=1}^{m-1} \norm{ \Xi_{n+1}(v) }^2_V \leq C\, \tau^5$ if $\partial^3_t v \in L^2\big((0,T); V\big)$ with constants $C$ not depending on $\tau$.
\end{lemma}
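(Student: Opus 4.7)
The plan is to treat both statements as standard polynomial consistency estimates, so the work splits cleanly into (i) identifying the local truncation error in Peano--kernel form and (ii) summing via Cauchy--Schwarz. I would handle the two estimates separately, since they differ only in the order of the Taylor expansion used.

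For the first claim, I would observe that $\delta^k v^{n+1}_\star = k v(t^{n+1}) - (k-1) v(t^n)$ is exactly the value at $t^{n+k}$ of the linear polynomial interpolating $v$ at $t^n$ and $t^{n+1}$. The standard interpolation remainder, written in integral form to accommodate only $H^2$-in-time regularity, produces
\[
\zeta_{n+1,k}(v) = \int_{t^n}^{t^{n+k}} K_1(s;k,\tau)\,\partial^2_t v(s)\,\dif s,
\]
with a kernel satisfying $|K_1(s;k,\tau)| \lesssim \tau$ and supported on an interval of length $\lesssim k \tau$. Cauchy--Schwarz inside the integral then gives $\norm{\zeta_{n+1,k}(v)}^2_V \lesssim \tau^3 \int_{t^n}^{t^{n+k}} \norm{\partial^2_t v(s)}^2_V \,\dif s$. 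Summing over $n=0,\dots,m-1$ and noting that each subinterval $[t^j,t^{j+1}]$ is covered by at most $k+1$ of these integration windows produces the stated $C\tau^3$ bound.

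For the second claim, I would Taylor-expand $v(t^{n+1})$, $v(t^n)$, $v(t^{n-1})$ around $t^{n+k}$ with integral remainder through third order. A short algebraic computation shows that the coefficients $(2k+1,-4k,2k-1)$ are precisely those for which the contributions of $v(t^{n+k})$ and of $\partial^2_t v(t^{n+k})$ cancel, while the coefficient of $\partial_t v(t^{n+k})$ comes out to exactly $2\tau$. What is left is an integral remainder
\[
\Xi_{n+1}(v) = \int_{t^{n-1}}^{t^{n+k}} K_2(s;k,\tau)\,\partial^3_t v(s)\,\dif s,
\]
with $|K_2(s;k,\tau)| \lesssim \tau^2$ on an interval of length $\lesssim (k+1)\tau$. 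Cauchy--Schwarz yields $\norm{\Xi_{n+1}(v)}^2_V \lesssim \tau^5 \int_{t^{n-1}}^{t^{n+k}} \norm{\partial^3_t v(s)}^2_V \,\dif s$, and the same overlap argument gives the $C\tau^5$ bound. The only delicate point is the algebraic cancellation that realizes the design of $\tD^k$ as a second-order approximation of $2\tau\partial_t v(t^{n+k})$; everything else is routine polynomial interpolation combined with Fubini for the summation. Finally, no constant depends on $\tau$ because the required integration interval extends at most up to $T + k\tau \le T^*$, where the postulated regularity of $v$ is available.
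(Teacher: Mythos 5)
Your proposal is correct and follows essentially the same route as the paper's proof: Taylor expansion around $t^{n+k}$ with integral (Peano-kernel) remainder, verification that the coefficients of $\tD^k$ annihilate the zeroth- and second-order terms while producing exactly $2\tau\,\partial_t v(t^{n+k})$, then Cauchy--Schwarz on each local remainder followed by an overlap-counting argument for the sum. The only cosmetic difference is that the paper writes the remainders explicitly as weighted combinations of integrals over $[t^{n+j},t^{n+k}]$ rather than as a single kernel, and counts the overlap as $O(k)$ just as you do.
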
\begin{proof}See the \proofref{lem:time-derivative-and-extrapolation} in Appendix~\ref{sec:app:proofs}, using the remainder term in Taylor's theorem.\end{proof}
The following two inequalities will be central to our estimates:
The first is an extension of Eq.~(3.16) from \cite{huang2023stability} for general $k$.
The second one is a variant of Eq.~(3.8) from \cite{huang2023stability} but with more favorable constants with respect to $k$. Both estimates will allow us later to show the validity of the approach for all $k\geq3$.
\begin{lemma}
    \label{lem:abb-equation-2}
    Given a sequence $(v_n)_{n \in \Zz} \in l^\infty(V)$, in some Hilbert space $V$ with inner product $\inner{\cdot, \cdot}_V$ and induced norm $\norm{\cdot}_V$, the following inequalities hold for $k\geq\nicefrac{1}{2}$:
    \begin{align}
        \inner{\tD^k v^{n+1}, \delta^{k+1} v^{n+1}}_V
        &
        \geq 
        a_k(\norm{v^{n+1}}^2_V - \norm{v^n}^2_V)
        +
        \norm{b_k v^{n+1} - c_k v^n}^2_V
        -\norm{b_k v^n - c_k v^{n-1}}^2_V
        \label{eq:factorizationD}
    \end{align}
    with the constants 
        $a_k = \frac{3}{2 (k+1)}$, 
        $ c_k = \sqrt{k^{2} + \frac{k}{2} - \frac{1}{2}}$,
        and 
        $b_k = c^{-1}_k\left(k^{2} + \frac{3 k}{2} - 1\right)$.
                                Further,
    \begin{align}
        \inner{ \delta^k v^{n+1}, \delta^{k+1} v^{n+1} }_V
        \geq 
        \hat{a}_k
        \norm{\delta^{k+1}v^{n+1}}^2_V
        +  \left(\hat{d}_k + \hat{f}_k\right) \norm{v^{n+1}}^{2}_V
        - \hat{d}_k \norm{v^{n}}^{2}_V
        ,
        \label{eq:factorizationextr2}
    \end{align}
   with constants $\hat{a}_k= \frac{4 k^{2} - 1 - \epsilon}{4 k^{2} + 4 k + 1}$, $\hat{d}_k = \frac{k + \frac{1}{2} - \epsilon k}{\left(2 k + 1\right)}$, and $\hat{f}_k = \epsilon$, where $\epsilon > 0$ is sufficiently small.
                            \end{lemma}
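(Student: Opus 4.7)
The plan is to prove both estimates by expanding every term on both sides using the definitions of $\tD^k$ and $\delta^k$ from Eq.~\eqref{eq:def-D-delta}, collecting coefficients of $\norm{v^{n+1}}^2_V$, $\norm{v^n}^2_V$, $\norm{v^{n-1}}^2_V$ and the cross inner products, and then exhibiting the difference LHS minus RHS as an explicit sum of squares. Since $V$ is a Hilbert space, each inequality reduces to positive semidefiniteness of a symmetric $3\times 3$ (for \eqref{eq:factorizationD}) or $2\times 2$ (for \eqref{eq:factorizationextr2}) coefficient matrix, and the prescribed constants are chosen precisely so that such a factorization exists.

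For Eq.~\eqref{eq:factorizationD}, I substitute $\tD^k v^{n+1}=(2k+1)v^{n+1}-4kv^n+(2k-1)v^{n-1}$ and $\delta^{k+1}v^{n+1}=(k+1)v^{n+1}-kv^n$, expand the inner product and the two telescoping squares, and obtain a quadratic form in $(v^{n+1},v^n,v^{n-1})$. The $v^{n-1}$-row of the resulting matrix forces $c_k^2=\tfrac12(2k-1)(k+1)=k^2+k/2-1/2$ and $b_k c_k=k^2+3k/2-1$, while matching a diagonal entry then yields $a_k=3/(2(k+1))$. Completing the square in $v^{n-1}$ and simplifying the remaining form in $(v^{n+1},v^n)$, I expect LHS minus RHS to equal the sum of squares $\tfrac{(2k-1)(k+1)}{2}\,\norm{v^{n+1}-2v^n+v^{n-1}}^2_V + 2\,\norm{v^{n+1}-v^n}^2_V$, which is manifestly nonnegative for every $k\ge 1/2$.

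For Eq.~\eqref{eq:factorizationextr2}, I first take $\epsilon=0$, where $\hat{a}_k=(2k-1)/(2k+1)$ and $\hat{d}_k=1/2$. Expanding via $\delta^k v^{n+1}=kv^{n+1}-(k-1)v^n$ and $\delta^{k+1}v^{n+1}=(k+1)v^{n+1}-kv^n$ and matching the coefficients of $\norm{v^{n+1}}^2_V$, $\norm{v^n}^2_V$, and $\langle v^{n+1},v^n\rangle_V$, the residual collapses to $\tfrac{1}{2(2k+1)}\norm{v^{n+1}+v^n}^2_V\ge 0$. To obtain the strict version with $\hat{f}_k=\epsilon>0$, I track the prescribed $O(\epsilon)$ perturbations: reducing $\hat{a}_k$ by $\epsilon/(2k+1)^2$ and $\hat{d}_k$ by $\epsilon k/(2k+1)$ shifts every entry of the residual $2\times 2$ Gram matrix by exactly $-\epsilon k(k+1)/(2k+1)^2$, so the residual becomes $\bigl(\tfrac{1}{2(2k+1)}-\tfrac{\epsilon k(k+1)}{(2k+1)^2}\bigr)\norm{v^{n+1}+v^n}^2_V$, which remains nonnegative for $\epsilon$ sufficiently small.

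The main obstacle is locating the right sum-of-squares decomposition for \eqref{eq:factorizationD}: three free constants must balance six independent entries of a symmetric matrix, so the existence of a factorization is a priori nontrivial. The resolution is the ansatz that the remainder should be built from the first and second finite differences $v^{n+1}-v^n$ and $v^{n+1}-2v^n+v^{n-1}$, the natural error indicators for a second-order BDF-type scheme; once this is guessed the verification is a short algebraic check and works for all real $k\ge 1/2$, strictly covering the integer range $k\ge 3$ appearing in the scheme.
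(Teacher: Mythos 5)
Your proposal is correct and follows essentially the same route as the paper's proof: expand both sides via the definitions of $\tD^k$ and $\delta^k$, match coefficients, and exhibit the difference as an explicit nonnegative remainder — indeed your remainders $\tfrac{(2k-1)(k+1)}{2}\norm{v^{n+1}-2v^n+v^{n-1}}^2_V + 2\norm{v^{n+1}-v^n}^2_V$ and $\bigl(\tfrac{k+1/2-\epsilon k(k+1)}{(2k+1)^2}\bigr)\norm{v^{n+1}+v^n}^2_V$ are exactly the terms the paper's identities drop (your coefficient of $\norm{v^{n+1}+v^n}^2_V$ is the correct one; the paper's stated $\hat b$ appears to contain a typo, which is immaterial since only its nonnegativity is used). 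No gaps.
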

\begin{proof}See the \proofref{lem:abb-equation-2} in Appendix~\ref{sec:app:proofs}.\end{proof}
\begin{remark}
    The main advantage of inequality~\eqref{eq:factorizationextr2} to (3.18) in \cite{huang2023stability} is the slightly larger coefficient before the term $\norm{\delta^{k+1} v^{n+1}}^2$, which can be observed if we compare the numbers given in the following tables with the results reported in \cite{huang2023stability}:
    \begin{center}
        \begin{tabular}{l|cccccc}
            $k$       & 1     & 2     & 3     & 4     & 5     & $\infty$ \\
            \hline
            $\hat{a}_k|_{\epsilon=0}$ & 1/3 & 3/5 & 5/7 & 7/9 & 9/11 & 1
            \\
            $\hat{a}_k|_{\epsilon=1/k^2}$ & 0.222 & 0.590 & 0.712 & 0.777 & 0.818 & 1
                                            \end{tabular}
    \end{center}
    This will allow us to prove error estimates for smaller $k$.
    For simplicity, we will always use $\epsilon=1/k^2$ in our proofs, which will turn out to be sufficiently small. The limit $\epsilon=0$ does not yield a smaller $k$ compared to $\epsilon=\nicefrac{1}{k^2}$.
\end{remark}

\subsection{Stability estimates}

As a first step in our estimate, we state the following weak stability result for the velocity and auxiliary variable:
\begin{lemma}[Weak stability]\label{lem:weak-stability}
    Assume that the constants of the energy functional $\mcE$ are set {according to Assumption~\ref{ass:constants}} to
                                        $\bar\alpha = 4 \sqrt{2} \alpha$ and
    $\bar{C} = \max( 16 C_{\hf_1}^2, 8 \bar\alpha^2 C_g^2, 1 )$,
    depending on the Lipschitz constant $\alpha$ for $\bff_2$ and the bounds $C_{\bff_1}$, $C_{g}$ for $\bff_1$ and $g$.
    Further, let $\bu^{n+1}$ and $\hu^{n+1}$ be the solutions of Scheme~\ref{scheme:fem}.
    Then, there is an $M$ that may depend on $T$ and $r_0 = \nicefrac{1}{2}\norm{\hu(0)}^2 + \nicefrac{\bar{\alpha}^2}{2}\norm{\theta(0)}^2 + \bar{C}$, such that for all $m+1\leq N$ we have
    \begin{align*}
        0 \leq  r^{m+1} \leq M,
        \;
        0 \leq \xi^{m+1} \leq M,
        \;
        \abs{\eta^{m+1}} \leq M,
        \textmd{ and }
        \\
        \nu \tau \sum_{n=0}^{m} \xi^{n+1} \norm{\nabla \bu^{n+1}}^2 \leq M,\,
        \nu \tau \sum_{n=0}^{m} \norm{\nabla \hu^{n+1}}^2 \leq M,
        \norm{\hu^{m+1}}^2 \leq M.
    \end{align*}
\end{lemma}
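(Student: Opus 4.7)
The plan is to exploit the exponential structure of the update for $r^{n+1}$ together with a quantitative bound of the form $d\mathcal{E}/dt \leq C_0(\mathcal{E}+\bar{C})$ that the choice of constants in Assumption~\ref{ass:constants} is tailored to produce. The starting point is the explicit formula
\[
\frac{d\mathcal{E}}{dt}(\theta,\bu) = -\nu\|\nabla\bu\|^2 - \kappa\bar{\alpha}^2\|\nabla\theta\|^2 + \langle \hf_1, \bu\rangle + \langle \hf_2(\theta), \bu\rangle + \bar{\alpha}^2\langle g, \theta\rangle,
\]
derived identically to the continuous case. First, I would use $\|\hf_1\|\leq C_{\hf_1}$, $\|g\|\leq C_g$, the Lipschitz property $\|\hf_2(\theta)\|\leq \alpha\|\theta\|$, and Cauchy--Schwarz together with Young's inequality to bound the indefinite terms by a constant multiple of $\mathcal{E}+\bar{C}$. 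The key point is that $\langle\hf_2(\theta),\bu\rangle\leq \alpha\|\theta\|\|\bu\|$ is split as $\frac{\alpha}{2c}\|\bu\|^2+\frac{c\alpha}{2}\|\theta\|^2$; choosing $c=\bar{\alpha}$ makes both pieces a small multiple of $\mathcal{E}$, and $\bar{\alpha}=4\sqrt{2}\alpha$ is precisely the threshold that absorbs this term into $\mathcal{E}$. The $\hf_1$ and $g$ contributions reduce to additive constants, absorbed into $C_0\bar{C}$ via the choice $\bar{C}=\max(16 C_{\hf_1}^2,\,8\bar{\alpha}^2 C_g^2,\,1)$.

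Once such a $C_0$ is fixed, I would set $X^{n+1}:=[d\mathcal{E}/dt]^{n+1}/(\mathcal{E}^{n+1}+\bar{C})$, so the update reads $r^{n+1}=r^n e^{\tau X^{n+1}}$ with $X^{n+1}\leq C_0$. Positivity $r^{n+1}>0$ is automatic from the exponential, and iterating the upper bound gives $r^{m+1}\leq r^0 e^{C_0 T}=:M$. Since $\bar{C}\geq 1$, this delivers $\xi^{m+1}=r^{m+1}/(\mathcal{E}^{m+1}+\bar{C})\in[0,M]$, whence $\eta^{m+1}=\xi^{m+1}(2-\xi^{m+1})$ is bounded. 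For $\|\hu^{m+1}\|^2=(\eta^{m+1})^2\|\bu^{m+1}\|^2$, I factor $(\eta^{n+1})^2=\xi^{n+1}(2-\xi^{n+1})^2\cdot\xi^{n+1}$ and use $\xi^{n+1}\|\bu^{n+1}\|^2\leq 2 r^{n+1}$, a direct consequence of $\mathcal{E}\geq\|\bu\|^2/2$, to conclude the $L^2$-bound on $\hu^{m+1}$.

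For the weighted dissipation sum, I would apply $e^{-y}\geq 1-y$ to $r^n=r^{n+1}e^{-\tau X^{n+1}}$, obtaining $r^n-r^{n+1}\geq -\tau r^{n+1} X^{n+1}$. Substituting the explicit form of $X^{n+1}$ and using the same indefinite-terms bound yields the telescoping inequality
\[
\tau\xi^{n+1}\bigl(\nu\|\nabla\bu^{n+1}\|^2+\kappa\bar{\alpha}^2\|\nabla\theta^{n+1}\|^2\bigr)\leq r^n-r^{n+1}+\tau C_0 r^{n+1}.
\]
Summing over $n=0,\dots,m$, using the already-established $r^{n+1}\leq M$ on the right-hand side and $r^{m+1}\geq 0$ in the telescoping, gives the claimed $M$-bound on $\nu\tau\sum\xi^{n+1}\|\nabla\bu^{n+1}\|^2$. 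The unweighted sum $\nu\tau\sum\|\nabla\hu^{n+1}\|^2$ then follows from the inequality $(\eta^{n+1})^2\leq M(2+M)^2\xi^{n+1}$, valid since $\xi^{n+1}\in[0,M]$.

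The expected main obstacle is calibrating the Young-type estimates in the first step. The bound $d\mathcal{E}/dt\leq C_0(\mathcal{E}+\bar{C})$ with a \emph{universal} $C_0$ must not consume any part of the dissipative terms $-\nu\|\nabla\bu\|^2$ or $-\kappa\bar{\alpha}^2\|\nabla\theta\|^2$, because those terms are needed with their full coefficients in the telescoping argument; this is what forces the particular choices $\bar{\alpha}=4\sqrt{2}\alpha$ and the explicit form of $\bar{C}$, and it is the only place where $\alpha$, $C_{\hf_1}$, and $C_g$ enter the final constant $M$.
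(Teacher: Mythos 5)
Your proposal is correct and follows essentially the same route as the paper's proof: the explicit exponential iteration for the upper bound on $r^{m+1}$, the Young-inequality calibration of $\bar\alpha$ and $\bar{C}$ to bound the indefinite part of $\nicefrac{d\mathcal{E}}{dt}$ by a universal multiple of $\mathcal{E}+\bar{C}$, the inequality $e^{x}\geq 1+x$ for the telescoping dissipation bound, and the identities $\eta=\xi(2-\xi)$ and $\xi\norm{\bu}^2\leq 2r$ for the bounds on $\hu$. The only differences are cosmetic (e.g., slightly cruder constants such as $M(2+M)^2$ in place of the paper's $4M_\xi$).
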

\begin{proof}
    The proof is a straightforward adaption of \cite[Thm.~6]{huang2023stability} to our time integrator.
    Recursively applying our time integrator gives us
    \begin{align}
    r^{m+1}
    =
    \exp\left(
    \tau
    \sum_{j=0}^m
        \frac{\nicefrac{d\mcE}{dt}(\theta^{j+1}, \bu^{j+1})}{\mcE(\theta^{j+1}, \bu^{j+1}) + \bar{C}}
    \right)
    r^0,
    \label{eq:weak-stability-explicit-r}
    \end{align}
    which guarantees $r^{m+1} \geq 0$ when $r^0 \geq 0$ and also $\xi^{m+1} \geq 0$ by \eqref{eq:scheme-eta-xi}.
    Next, we want to bound $r^{m+1}$ from above.
    By omitting the negative terms of the exponential, we obtain
    \begin{align}
    r^{m+1}
    \leq
    \exp\left(
    \tau
    \sum_{n=0}^m
    \left(
        \frac{\inner{\hf(t^{n+1}, \theta^{n+1}), \bu^{n+1}}}{\mathcal{E}(\theta^{n+1}, \bu^{n+1}) + \bar{C}}
        +
        \frac{\bar\alpha^2 \inner{g(t^{n+1}), \theta^{n+1}}}{\mathcal{E}(\theta^{n+1}, \bu^{n+1}) + \bar{C}}
    \right)
    \right)
    r^0.
    \label{eq:estimate-above-r-1}
    \end{align}
    To further estimate the exponential function, 
    we use Young's inequality, and by our choice of $\bar{\alpha}$ and $\bar{C}$, we obtain
    \begin{align}
        \abs{
            \frac{\inner{\hf(t^{n+1}, \theta^{n+1}), \bu^{n+1}}}{\mathcal{E}(\theta^{n+1}, \bu^{n+1}) + \bar{C}}
        }
        &
        \leq 
        \frac{
            (C_{\hf_1} + \alpha\norm{\theta^{n+1}})
            \norm{\bu^{n+1}}
        }{\mathcal{E}(\theta^{n+1}, \bu^{n+1}) + \bar{C}}
        \leq
        \frac{
            \frac{1}{8}
            \norm{\bu^{n+1}}^2
            + 4 \alpha^2 \norm{\theta^{n+1}}^2
            + 4 C_{\hf_1}^2
        }{\mathcal{E}(\theta^{n+1}, \bu^{n+1}) + \bar{C}}
        \nonumber
        \\
        &
        \leq 
        \frac{
            \frac{1}{8}
            \norm{\bu^{n+1}}^2
            +
            \frac{1}{4}
            \frac{\bar{\alpha}^2}{2} \norm{\theta^{n+1}}^2
            +
            \frac{1}{4}\bar{C}
        }{
            \frac{1}{2}
            \norm{\bu^{n+1}}^2
            +
            \frac{\bar{\alpha}^2}{2} \norm{\theta^{n+1}}^2
            +
            \bar{C}
        }
        \leq \frac{1}{4},
        \label{eq:weak-stability-estimate-f}
    \end{align}
    and
    \begin{align}
        \abs{
            \frac{\bar\alpha^2 \inner{g(t^{n+1}), \theta^{n+1}}}{\mathcal{E}(\theta^{n+1}, \bu^{n+1}) + \bar{C}}
            }
        &
        \leq 
        \frac{ \frac{1}{8} \bar\alpha^2  \norm{\theta^{n+1}}^2 + 2 \bar\alpha^2  C_g^2 }{\mathcal{E}(\theta^{n+1}, \bu^{n+1}) + \bar{C}}
        \leq
        \frac{ \frac{1}{8} \bar\alpha^2  \norm{\theta^{n+1}}^2 + \frac{1}{4}\bar{C} }{
            \frac{1}{2}
            \norm{\bu^{n+1}}^2
            +
            \frac{\bar{\alpha}^2}{2} \norm{\theta^{n+1}}^2
            +
            \bar{C}
        }
        \leq
        \frac{1}{4}
        .
        \label{eq:weak-stability-estimate-g}
    \end{align}
    Plugging the last two estimates into \eqref{eq:estimate-above-r-1}, we obtain the $\tau$-independent bounds $r^{n+1}\!\leq\!e^{T/2} r^0\!\!=:\!\!M_r$ and $\xi^{n+1} \leq \exp(T/2) r^0 / \bar{C} =: M_\xi$.

    To estimate the gradients, we 
    multiply Eq.~\eqref{eq:scheme-r} by 
    $ \exp\left(- \tau \tfrac{\nicefrac{d\mcE}{dt}(\theta^{n+1}, \bu^{n+1})}{\mcE(\theta^{n+1}, \bu^{n+1}) + \bar{C}}\right)$ and exploit $\exp(x) \geq 1 + x$, yielding 
    \begin{align*}
    r^{n+1}
    \left(
    1
    -
        \tau
        \frac{\nicefrac{d\mcE}{dt}(\theta^{n+1}, \bu^{n+1})}{\mcE(\theta^{n+1}, \bu^{n+1}) + \bar{C}}
    \right)
    \leq
    r^{n}.
    \end{align*}
    This can be written, by omitting the positive term
    $\kappa \bar{\alpha}^2 \norm{\nabla \theta^{n+1}}^2$ on the left, as 
    \begin{align*}
    r^{n+1}
    -r^n
    + \tau \xi^{n+1} 
    \nu \norm{\nabla \bu^{n+1}}^2
        \leq
    \tau\cdot
    r^{n+1}
    \frac{
    \inner{\hf(t^{n+1}, \theta^{n+1}), \bu^{n+1}}
    +
    \bar\alpha^2 \inner{g(t^{n+1}), \theta^{n+1}}
    }{ \mcE(\theta^{n+1}, \bu^{n+1}) + \bar{C} }
    .
    \end{align*}
    Summing up from $n=0$ to $m$ yields, using the previous estimates for the right-hand-side terms, and omitting the positive $r^{m+1}$ term, the bound
    \begin{align*}
    \tau \nu \sum_{n=0}^{m}\xi^{n+1} 
    \norm{\nabla \bu^{n+1}}^2
        \leq
    r^0 + T M_r/2
    =: M_{\nabla \hu}
    ,
    \end{align*}
    which is again independent of $\tau$.
    Since the inequality $1-|1-x|^\gamma \leq \max(\gamma,1) \cdot x$ holds for all $x,\gamma \geq 0$, we can use $\xi^{n+1} \geq 0$, to obtain 
    \begin{align*}
        \abs{\eta^{n+1}}
        \leq &
        2 \, \xi^{n+1}
        \leq
        2
        \frac{r^{n+1}}{\mcE(\theta^{n+1}, \bu^{n+1}) + \bar{C}}
        \leq
        \frac{2 M_r}{\nicefrac{1}{2} \norm{\bu^{n+1}}^2 + \bar{C}}
        .
    \end{align*}
    This upper bound for $\eta^{n+1}$ allows us to bound $\norm{\hu^{n+1}}^2$ by
        \begin{align*}
        \norm{
            \hu^{n+1}
        }^2
        =
        (\eta^{n+1})^2
        \norm{
            \bu^{n+1}
        }^2
        \leq
        4 M_r M_\xi
        \frac{\nicefrac{1}{2}\norm{\bu^{n+1}}^2}{\nicefrac{1}{2}\norm{\bu^{n+1}}^2 + \bar{C}}
        \leq M_\hu
        ,
    \end{align*}
    where $M_\hu := 4 M_r M_\xi $.
    Finally, since
    \begin{align*}
        \norm{\nabla \hu^{j+1}}^2
        =
        (\eta^{j+1})^2
        \norm{\nabla \bu^{j+1}}^2
        =
        4 M_\xi
        \xi^{j+1}
        \norm{\nabla \bu^{j+1}}^2
        ,
    \end{align*}
    we have $ \nu \tau \sum_{j=0}^m \norm{\nabla \hu^{j+1}}^2 \leq 4 M_\xi M_{\nabla\hu}$, which shows the last inequality. Setting \[M = \max \{M_r, M_\xi, M_\eta, M_\hu, M_{\nabla\hu}, 4 M_\xi M_{\nabla\hu} \},\] 
    the claim follows. 
\end{proof}
\begin{remark}
The constant $M$ strongly depends on the initial energy of the system and, hence, on the initial temperature distribution.
\end{remark}

In the following, we will always assume that $\bar{\alpha}$ and $\bar{C}$ are chosen, such that the conditions of Lemma~\ref{lem:weak-stability} are satisfied, and hence the conditions are not explicitly stated.
A direct consequence of the previous lemma is that if we have a strictly positive lower bound $c_\xi > 0$ for $\xi^{n}$, such that $c_\xi \leq \xi^n$, for all $n \leq m+1$, then $\nu \tau \sum_{n=0}^m \norm{\nabla \bu^{n+1}}^2 \leq \frac{M}{c_\xi} =: C_1$.
Similarly, if we have a strictly lower bound $c_\eta > 0$ for $\eta^{n}$, s.t. $c_\eta \leq \eta^n$, for all $n \leq m+1$, then
$ \norm{\bu^{n}}^2 \leq \frac{M}{c_\eta} =: M_{\bu}$.
The lower bounds $c_\xi$ and $c_\eta$ cannot be satisfied for an arbitrary time step size, but we will show that they are satisfied for small enough time steps.

To derive stability estimates for $\theta$ and $\hu$, we need the following result to estimate the advective terms:
\begin{lemma}\label{lem:estimate-advection-term}
    For $\hu \in( H^1_0(\Omega))^\dim$ and $\theta, \tilde{\theta} \in H^2(\Omega) \cap H^1_0(\Omega)$, let $\norm{\hu} \leq \widetilde{M}$, then for every $\epsilon_1, \epsilon_2 > 0$ there is a $C=C(\epsilon_1,\epsilon_2)>0$ such that
    \begin{align}
        \abs{
            \inner{(\hu \cdot \nabla) \theta, \Delta \tilde \theta}
        }
         & \leq
        \epsilon_1 \norm{\Delta \tilde \theta}^2
        +\epsilon_2 \norm{\Delta \theta}^2
        + C \norm{\nabla \hu}^2 \norm{\nabla \theta}^2
        .
        \label{eq:estimate-advection-term-ut}
    \end{align}
    For
    $\hu, \tilde{\hu}, \hat{\hu} \in( H^1_0(\Omega))^\dim$ with $\tilde{\hu},\hat{\hu} \in (H^2(\Omega))^\dim$, and $\epsilon_1, \epsilon_2 > 0$ there is a $C=C(\epsilon_1,\epsilon_2)>0$ with
    \begin{align}
        \abs{ \inner{(\hu \cdot \nabla \hat{\hu}), \Delta \tilde{\hu}} }
         & \leq
        \epsilon_1 \norm{\Delta \tilde{\hu}}^2
        +\epsilon_2 \norm{\Delta \hat{\hu}}^2
        + C \norm{\nabla \hu}^2 \norm{\nabla \hat{\hu}}^2
        \label{eq:estimate-advection-term-u}
        .
    \end{align}

\end{lemma}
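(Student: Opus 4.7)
The plan is to derive both inequalities by the standard two-dimensional H\"older--Ladyzhenskaya--Young chain, leveraging the elliptic $H^2$-regularity granted by Assumption~\ref{ass:domain}.

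For~\eqref{eq:estimate-advection-term-ut}, I would start with H\"older's inequality with exponents $(4,4,2)$:
\[
  \abs{\inner{(\hu\cdot\nabla)\theta,\Delta\tilde\theta}} \leq \norm{\hu}_{L^4(\Omega)}\,\norm{\nabla\theta}_{L^4(\Omega)}\,\norm{\Delta\tilde\theta}.
\]
Lemma~\ref{lem:vectorial-ladyzhenskaya-inequality} yields $\norm{\hu}_{L^4(\Omega)} \lesssim \norm{\hu}^{1/2}\norm{\nabla\hu}^{1/2}$; applied to $\nabla\theta$ together with the elliptic bound $\norm{\nabla\theta}_{H^1(\Omega)} \lesssim \norm{\Delta\theta}$ from the remark following that lemma, it also gives $\norm{\nabla\theta}_{L^4(\Omega)} \lesssim \norm{\nabla\theta}^{1/2}\norm{\Delta\theta}^{1/2}$. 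Using the hypothesis $\norm{\hu} \leq \widetilde M$ to absorb $\norm{\hu}^{1/2}$ into the constant leaves
\[
  \abs{\inner{(\hu\cdot\nabla)\theta,\Delta\tilde\theta}} \leq C_{\widetilde M}\,\norm{\nabla\hu}^{1/2}\norm{\nabla\theta}^{1/2}\norm{\Delta\theta}^{1/2}\norm{\Delta\tilde\theta},
\]
and two successive Young inequalities (first peeling off $\epsilon_1\norm{\Delta\tilde\theta}^2$, then $\epsilon_2\norm{\Delta\theta}^2$) yield the remainder $C(\epsilon_1,\epsilon_2)\norm{\nabla\hu}^2\norm{\nabla\theta}^2$.

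For the vector-valued inequality~\eqref{eq:estimate-advection-term-u}, the argument mirrors the scalar case after replacing $(\theta,\tilde\theta)$ by $(\hat\hu,\tilde\hu)$: H\"older bounds the integral by $\norm{\hu}_{L^4(\Omega)}\norm{\nabla\hat\hu}_{L^4(\Omega)}\norm{\Delta\tilde\hu}$; Ladyzhenskaya applies componentwise; elliptic regularity controls $\norm{\nabla\hat\hu}_{H^1(\Omega)}$ by $\norm{\Delta\hat\hu}$; and an $L^2$-bound on $\hu$ (furnished in applications by the weak stability result of Lemma~\ref{lem:weak-stability}) absorbs the leftover $\norm{\hu}^{1/2}$. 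Two Young applications then isolate $\epsilon_1\norm{\Delta\tilde\hu}^2$ and $\epsilon_2\norm{\Delta\hat\hu}^2$ and leave the stated remainder.

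The only non-mechanical point is the bookkeeping in the two Young applications, so that the remainder exits in the exact form $\norm{\nabla\hu}^2\norm{\nabla\theta}^2$ (resp.~$\norm{\nabla\hu}^2\norm{\nabla\hat\hu}^2$) rather than picking up an extra power of $\norm{\nabla\hu}$; the $L^2$-bound on $\hu$ is precisely what makes this exponent balance work. No divergence-freeness of $\hu$ is used, and the proof is agnostic to whether $\hu$ is scalar- or vector-valued thanks to the componentwise application of Ladyzhenskaya.
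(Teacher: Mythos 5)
Your proposal is correct and follows essentially the same route as the paper's proof: H\"older/Cauchy--Schwarz, Ladyzhenskaya (Lemma~\ref{lem:vectorial-ladyzhenskaya-inequality}) on both factors, elliptic regularity to pass from $\norm{\nabla\theta}_{H^1}$ to $\norm{\Delta\theta}$, and two Young steps. The only cosmetic difference is that you absorb $\norm{\hu}^{1/2}$ via $\norm{\hu}\leq\widetilde M$ at the outset, whereas the paper keeps $\norm{\hu}\norm{\nabla\hu}$ after the first Young step and then uses Poincar\'e on the $\norm{\nabla\theta}^2$ term and $\norm{\hu}\leq\widetilde M$ only on the cross term; both correctly exploit the $L^2$-bound to keep the remainder at $\norm{\nabla\hu}^2\norm{\nabla\theta}^2$.
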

\begin{proof}
    From Cauchy-Schwarz, Lemma~\ref{lem:vectorial-ladyzhenskaya-inequality}, and the generalized Young inequality, we obtain
    \begin{align*}
        \abs{
            \inner{(\hu \cdot \nabla) \theta, \Delta \tilde \theta}
        }
         & \leq
        \norm{\hu \cdot \nabla \theta}\norm{\Delta \tilde \theta}
        \leq
        \norm{\hu}_{L^4(\Omega)} \norm{\nabla \theta}_{L^4(\Omega)} \norm{\Delta \tilde \theta}
        \\
         & \leq
        \norm{\hu}^{1/2}
        \norm{\nabla \hu}^{1/2}
        (
        \norm{\nabla \theta}
        +
        \norm{\nabla \theta}^{1/2}
        \norm{\Delta \theta}^{1/2}
        )
        \norm{\Delta \tilde \theta}
        \\
         & \leq
        \epsilon_1 \norm{\Delta \tilde \theta}^2
        +
        C
        (
        \norm{\hu}
        \norm{\nabla \hu}
        \norm{\nabla \theta}^2
        +
        \norm{\hu}
        \norm{\nabla \hu}
        \norm{\nabla \theta}
        \norm{\Delta \theta}
        )
        .
    \end{align*}
    Using Poincar\'e on the second term $\norm{\hu} \lesssim \norm{\nabla\hu}$, $\norm{\hu}\leq \widetilde{M}$ on the third term, and separating its components with Young yield Eq.~\eqref{eq:estimate-advection-term-ut}. The vectorial case in Eq.~\eqref{eq:estimate-advection-term-u} follows analogously.
\end{proof}

{Assumptions \ref{ass:domain}, \ref{ass:constants}, \ref{ass:solution}, and \ref{ass:bootstrapping} will be vital for nearly all of our coming proofs. 
Hence, we will assume they hold from now on and refrain from mentioning them explicitly in the coming theorems.}

The next Lemma shows that we do not need to scale $\theta^m$ with an auxiliary variable to achieve stability, as long as $\hu^n$ satisfies the weak stability result of Lemma~\ref{lem:weak-stability}.
\begin{lemma}[Stability estimate for the heat equation]
    \label{lem:stability-heat}
    Given Assumptions~\ref{ass:domain}, \ref{ass:constants}, \ref{ass:solution}, and \ref{ass:bootstrapping}, we have
    \begin{align*}
        \norm{\nabla \theta^m}^2
        + \sum_{n=0}^{m} \tau \norm{\Delta \theta^n}^2
        \leq C_\theta^2,
    \end{align*}
    where $C_\theta$ depends on $M$, $T$ but not on $\tau$.
\end{lemma}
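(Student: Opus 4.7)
The plan is to test the temperature equation of Scheme~\ref{scheme:fem} with the natural $H^1$-energy test function $-\Delta\delta^{l+1}\theta^{n+1}$ and then invoke the two factorization identities of Lemma~\ref{lem:abb-equation-2}. After integration by parts (using $\theta=0$ on $\partial\Omega$) the time-derivative term becomes $\langle\nabla\tD^l\theta^{n+1},\nabla\delta^{l+1}\theta^{n+1}\rangle$; applying \eqref{eq:factorizationD} with $V=L^2$ to the sequence $(\nabla\theta^n)_n$ produces a telescoping $a_l\bigl(\|\nabla\theta^{n+1}\|^2-\|\nabla\theta^n\|^2\bigr)$ together with non-negative shift quadratics. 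The diffusive term yields $2\tau\kappa\langle\Delta\delta^l\theta^{n+1},\Delta\delta^{l+1}\theta^{n+1}\rangle$, and \eqref{eq:factorizationextr2} applied with the $\|\Delta\cdot\|$-norm provides the key dissipation $2\tau\kappa\hat{a}_l\|\Delta\delta^{l+1}\theta^{n+1}\|^2$ plus a telescoping $\|\Delta\theta\|^2$-contribution that delivers the $\tau\sum_n\|\Delta\theta^n\|^2$ on the left-hand side of the claim.

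For the convective contribution $2\tau\langle(\delta^{l+1}\hu^n\cdot\nabla)\delta^{l+1}\theta^n,-\Delta\delta^{l+1}\theta^{n+1}\rangle$ I would apply \eqref{eq:estimate-advection-term-ut}, choosing $\epsilon_1,\epsilon_2$ so small that the $\|\Delta\delta^{l+1}\theta^{n+1}\|^2$- and $\|\Delta\delta^{l+1}\theta^n\|^2$-pieces are absorbed into the diffusive dissipation (the latter after a harmless unit index shift against $\hat{f}_l\sum\|\Delta\theta^{n+1}\|^2$). The hypothesis $\|\delta^{l+1}\hu^n\|\leq(2l+1)\sqrt{M}$ required by Lemma~\ref{lem:estimate-advection-term} is supplied directly by the bound $\|\hu^n\|^2\leq M$ of Lemma~\ref{lem:weak-stability}. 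The residual $C\|\nabla\delta^{l+1}\hu^n\|^2\,\|\nabla\delta^{l+1}\theta^n\|^2$ is precisely the trilinear remainder suited to Grönwall. The source term is controlled by Cauchy--Schwarz and Young, using $\|g(t^{n+l})\|\leq C_g$ to produce a summable contribution, while a small $\epsilon$-fraction of $\|\Delta\delta^{l+1}\theta^{n+1}\|^2$ is again absorbed in the dissipation.

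Summing from $n=1$ to $m-1$, using $\|\delta^{l+1}w^n\|^2\lesssim\|w^n\|^2+\|w^{n-1}\|^2$ to convert extrapolated norms into ordinary ones, and bounding the initial terms by Assumption~\ref{ass:bootstrapping}, I obtain an inequality of the shape
\[
\|\nabla\theta^m\|^2+\tau\sum_{n=0}^{m}\|\Delta\theta^n\|^2\ \leq\ C(M,T,C_b,C_g)+C\tau\sum_{n=1}^{m-1}\|\nabla\hu^n\|^2\bigl(\|\nabla\theta^n\|^2+\|\nabla\theta^{n-1}\|^2\bigr).
\]
Because $\nu\tau\sum_n\|\nabla\hu^n\|^2\leq M$ by Lemma~\ref{lem:weak-stability}, the sequence $a_n:=\|\nabla\hu^n\|^2$ has uniformly bounded discrete integral, so Lemma~\ref{lem:Gronwall-reformulated} (with $u^n=\nabla\theta^n$, $U=L^2$, vanishing second sequence and the appropriate $c_n$-term via the extrapolation shift) closes the estimate and yields the $\tau$-independent constant $C_\theta$.

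The main obstacle is precisely the advection term: at this stage we only have $L^\infty$-in-time control of $\|\hu^n\|$ and $L^2$-in-time control of $\|\nabla\hu^n\|$, but no $H^2$-bound on $\hu$. Lemma~\ref{lem:estimate-advection-term} is tailored so that a Ladyzhenskaya/Young chain distributes the derivatives as $\epsilon\|\Delta\theta\|^2+\|\nabla\hu\|^2\|\nabla\theta\|^2$, which is exactly the combination compatible with our weak stability and with Grönwall. Everything else --- index shifts, choice of $\epsilon$-parameters, and the bootstrap of $(\theta^1,\hu^1)$ --- is routine bookkeeping.
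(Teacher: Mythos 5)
Your proposal is correct and follows essentially the same route as the paper's proof: testing with $(-\Delta)\delta^{l+1}\theta^{n+1}$, invoking Lemma~\ref{lem:abb-equation-2} for the telescoping and dissipation structure, controlling the advection term via Lemma~\ref{lem:estimate-advection-term} together with the weak stability bound $\norm{\hu^n}^2\leq M$ from Lemma~\ref{lem:weak-stability}, and closing with the discrete Gr\"onwall Lemma~\ref{lem:Gronwall-reformulated} using the summability of $\tau\sum_n\norm{\nabla\hu^n}^2$ and the bootstrapping assumption for the initial data. The only cosmetic difference is that the paper keeps the Gr\"onwall weight attached to $\norm{\nabla\delta^{l+1}\theta^n}^2$ via the $c_n$-slot rather than first expanding the extrapolation, which changes nothing of substance.
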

\begin{proof}
    Testing Eq.~\eqref{eq:scheme-T} with $(-\Delta)\delta^{l+1}\theta^{n+1}$ and integrating by parts yield
    \begin{align*}
        \inner{\nabla\tD^l \theta^{n+1}, \nabla \delta^{l+1} \theta^{n+1}} &
        + 2 \tau \kappa \inner{\Delta \delta^{l} \theta^{n+1}, \Delta \delta^{l+1} \theta^{n+1}}
        =
        2 \tau \inner{g(t^{n+l}), \Delta \delta^{l+1} \theta^{n+1}}
        \\ &
        - 2 \tau \inner{(\delta^{l+1} \hu^{n} \cdot \nabla)\delta^{l+1} \theta^{n}, (-\Delta) \delta^{l+1} \theta^{n+1}}
        =: (I) + (II)
        ,
    \end{align*}
    where we abbreviate the right-hand side and advection terms by $(I)$ and $(II)$.
    We estimate the right-hand side term by
    \begin{align}
                \abs{(I)}
        & 
        \leq 2 \tau C\norm{g(t^{n+l})}^2
        +
        2 \tau \kappa \epsilon
        \norm{\Delta \delta^{l+1} \theta^{n+1}}^2
                \leq
        2 \tau C + 2 \tau \kappa \epsilon  \norm{\Delta \delta^{l+1} \theta^{n+1}}^2,
        \label{eq:stabheat-i}
    \end{align}
    and the advection term with Eq.~\eqref{eq:estimate-advection-term-ut} in Lemma~\ref{lem:estimate-advection-term} using $\norm{\hu^n}^2 \leq M$ by Lemma~\ref{lem:weak-stability} to
    \begin{align}
        \abs{(II)}
        &
        \leq 
        2 \tau\kappa
        \epsilon
        \norm{
            \Delta \delta^{l+1} \theta^{n+1}
        }^2
        +
        2 \tau\kappa
        \epsilon
        \norm{
            \Delta \delta^{l+1} \theta^{n}
        }^2
        +
        2 \tau
        C
        \norm{\nabla \delta^{l+1}\hu^{n}}^2
        \norm{\nabla \delta^{l+1} \theta^n}^2
        .
        \label{eq:stabheat-ii}
    \end{align}
    Collecting the previous estimates in Eq.~\eqref{eq:stabheat-i} and \eqref{eq:stabheat-ii}, using Lemma~\ref{lem:abb-equation-2}, 
    summing up from $n=1$ to $m-1$, and omitting some positive terms yield
    \begin{align*}
         &
        a_l \norm{\nabla \theta^{m}}^2
        + 2 \tau \kappa (\hat{a}_l - 3\epsilon)
        \sum_{n=1}^{m-1}
        \norm{\delta^{l+1} \Delta \theta^{n+1}}^2
        + 2 \tau \kappa \hat{f}_l
        \sum_{n=1}^{m-1}
        \norm{\Delta \theta^{n+1}}^2
        \\ &
        \leq
        2 \tau
        C
        \sum_{n=1}^{m-1}
        \norm{\nabla \delta^{l+1}\hu^{n}}^2
        \norm{\nabla \delta^{l+1} \theta^n}^2
        + 2 T C
        +\norm{b_l \nabla \theta^{1} - c_l \nabla \theta^{0}}^2
        + 2 \tau \kappa \hat{d}_l\norm{\Delta \theta^{1}}^2
        + a_l \norm{\nabla \theta^{1}}^2
    \end{align*}
    Setting $\epsilon = \hat{a}_l/4$, using the Gr\"onwall Lemma~\ref{lem:Gronwall-reformulated} with $\theta^n$ as $u^n$ and $0$ as $v^n$, we
    trivially obtain $ \tau \sum a_n =  \tau \sum b_n =  \tau \sum d_n =  0 = M_a = M_b = M_d $ and
    $ \tau \sum c_n = 2 \tau \frac{C}{a_l} \sum_{n=1}^{m-1} \norm{\nabla \delta^{l+1} \hu^n}^2 \leq M_c$,
    where we used Lemma~\ref{lem:weak-stability}.
    With our assumption on a stable bootstrapping algorithm, i.e., $\norm{b_l\nabla \theta^1}^2,\norm{\Delta \theta^1}^2 \leq C$ with $C$ independent of $\tau$, we get the claim.
\end{proof}
Next, we aim for a similar estimate for the Stokes equation with the same techniques displayed in the previous proof. The main challenge will be to estimate the pressure gradient of Eq.~\eqref{eq:scheme-u} with sufficiently small estimated constants in front of $\norm{\Delta\delta^{k+1}\bu^{n+1}}^2$ to match the size of the constant $\hat{a}_k$ from Lemma~\ref{lem:abb-equation-2}. This estimate will be provided in the following Lemma:
\begin{lemma}[Pressure estimate]\label{lem:pressure-stability-estimate}
Given a solution of {Scheme~\ref{scheme:fem}}, satisfying Assumptions~\ref{ass:domain}, \ref{ass:constants}, \ref{ass:solution}, and \ref{ass:bootstrapping}.
Then, there is a constant $C>0$ independent of $m$ and the time step width $\tau$ such that
\begin{align*}
\sum_{n=1}^{m-1}
&
\inner{ \nabla \delta^{k+1} p^n , \Delta \delta^{k+1} \bu^{n+1} }
\leq
\nu
\sqrt{\frac{\nicefrac{1}{2}+\epsilon}{1-\epsilon}}
\sum_{n=1}^{m-1}
\norm{
\Delta \delta^{k+1} \bu^{n+1}
}^2
+
\nu \epsilon \sum_{n=1}^{m-1} \norm{ \Delta \bu^{n} }^2
+ C  \sum_{n=1}^{m-2} \norm{\delta^{k+1}\theta^{n}}^2
\nonumber
\\&
+ C 
\sum_{n=2}^{m-1} 
\norm{ \nabla \delta^{k+1} \bu^{n} }^2
+ C 
\sum_{n=1}^{m-1} 
\norm{ \nabla \hu^{n} }^2
\norm{ \nabla \bu^{n} }^2
+ C (1+ N )
.
\end{align*}
\end{lemma}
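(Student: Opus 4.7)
The strategy is to use the pressure equation~\eqref{eq:scheme-p} to rewrite $\nabla\delta^{k+1}p^n$ as an $L^2$-projection onto $\nabla H^1$ of a body-force/advection part plus a Stokes-pressure part (recognized via Lemma~\ref{lem:stokes-technical}), and then to apply Cauchy--Schwarz together with a finely tuned Young's inequality term-by-term. The principal difficulty is to obtain precisely the coefficient $\nu\sqrt{(\nicefrac{1}{2}+\epsilon)/(1-\epsilon)}$ in front of $\sum\norm{\Delta\delta^{k+1}\bu^{n+1}}^2$: a naive Young step produces $\nu/2$ at best, so the Young parameter must be chosen in tight coordination with the $\epsilon$ appearing in Lemma~\ref{lem:stokes-technical}.

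Applying $\delta^{k+1}$ to~\eqref{eq:scheme-p} at time $n$, using linearity, and invoking the non-expansivity of the $L^2$-projection onto $\nabla H^1$ yields
\begin{equation*}
|\inner{\nabla\delta^{k+1}p^n,\Delta\delta^{k+1}\bu^{n+1}}|\le\bigl(\norm{\delta^{k+1}\bff(t^n,\theta^n)}+\norm{\delta^{k+1}[(\bu^n\cdot\nabla)\bu^n]}+\nu\norm{\nabla p_s(\delta^{k+1}\bu^n)}\bigr)\norm{\Delta\delta^{k+1}\bu^{n+1}}.
\end{equation*}
For the Stokes factor, Lemma~\ref{lem:stokes-technical} (with parameter $\epsilon$) together with $\sqrt{a+b}\le\sqrt a+\sqrt b$ gives $\norm{\nabla p_s(\delta^{k+1}\bu^n)}\le\sqrt{\nicefrac{1}{2}+\epsilon}\,\norm{\Delta\delta^{k+1}\bu^n}+C\norm{\nabla\delta^{k+1}\bu^n}$; a Young's inequality with parameter $\mu=2/\sqrt{1-\epsilon}$ then produces exactly the claimed $\nu\sqrt{(\nicefrac{1}{2}+\epsilon)/(1-\epsilon)}\,\norm{\Delta\delta^{k+1}\bu^{n+1}}^2$ while leaving a complementary $\norm{\Delta\delta^{k+1}\bu^n}^2$-term that is further decomposed, via $\delta^{k+1}\bu^n=(k+1)\bu^n-k\bu^{n-1}$ and another Young step, into $\nu\epsilon\norm{\Delta\bu^n}^2$ plus a remainder swept into the $C\sum\norm{\nabla\delta^{k+1}\bu^n}^2$ bucket.

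For the body-force factor, split $\bff=\bff_1+\bff_2(\theta)$. The bound $\norm{\bff_1(t)}\le C_{\bff_1}$ delivers, after $N$ steps, the $C(1+N)$ contribution. For $\bff_2$, the Lipschitz property with $\bff_2(0)=\mathbf{0}$, combined with $\delta^{k+1}\bff_2(\theta^n)=\bff_2(\theta^n)+k[\bff_2(\theta^n)-\bff_2(\theta^{n-1})]$, Young's inequality, and the uniform bound on $\norm{\theta^n}$ furnished by Lemma~\ref{lem:stability-heat}, produces the $C\sum\norm{\delta^{k+1}\theta^n}^2$ contribution. For the advection factor, expand $\delta^{k+1}[(\bu^n\cdot\nabla)\bu^n]=(\bu^n\cdot\nabla)\delta^{k+1}\bu^n+k[(\bu^n-\bu^{n-1})\cdot\nabla]\bu^{n-1}$ and apply Ladyzhenskaya (Lemma~\ref{lem:vectorial-ladyzhenskaya-inequality}) together with the weak-stability bound $\norm{\hu^n}\le\sqrt M$ of Lemma~\ref{lem:weak-stability}; translating between $\bu$ and $\hu$ via $\hu=\eta\bu$ closes this piece as $C\norm{\nabla\hu^n}^2\norm{\nabla\bu^n}^2$ plus an $O(\epsilon)\norm{\Delta\bu^n}^2$ contribution that is absorbed into the Stokes leftover.

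Summing $n=1,\dots,m-1$ and collecting, all boundary artifacts arising from the index shift between $\norm{\Delta\delta^{k+1}\bu^n}^2$ and $\norm{\Delta\delta^{k+1}\bu^{n+1}}^2$ (at $n=1$ and $n=m$) are absorbed either into $C\sum_{n=2}^{m-1}\norm{\nabla\delta^{k+1}\bu^n}^2$ or into the bootstrapping constant $C_b$ of Assumption~\ref{ass:bootstrapping}. If one starts from the Scheme~\ref{scheme:fem} pressure equation \eqref{eq:newpressure} instead of \eqref{eq:scheme-p}, the additional $\tfrac{1}{2\tau}\tD^k\bu^{n+1}$ and $-\nu\nabla\nabla\cdot\delta^k\bu^{n+1}$ terms fit into the same $\norm{\Delta\delta^{k+1}\bu^{n+1}}^2$ and $\norm{\nabla\delta^{k+1}\bu^n}^2$ buckets without affecting the leading constants. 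The hardest single step remains the Stokes-pressure Young balancing, because $\mu$ must track $\epsilon$ exactly in order to produce simultaneously the headline constant $\nu\sqrt{(\nicefrac{1}{2}+\epsilon)/(1-\epsilon)}$ and the clean $\nu\epsilon$ remainder on $\norm{\Delta\bu^n}^2$.
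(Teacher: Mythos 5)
There is a genuine gap, and it sits exactly in the sentence you wave off at the end. The lemma is stated for a solution of Scheme~\ref{scheme:fem}, whose pressure is \emph{not} defined by Eq.~\eqref{eq:scheme-p} but by the recursion \eqref{eq:p-rec-definition}/\eqref{eq:newpressure}, obtained from \eqref{eq:scheme-p} only after dropping terms. So your starting identity --- reading $\nabla\delta^{k+1}p^n$ off as the $L^2$-projection of $\delta^{k+1}\hf - \delta^{k+1}[(\bu\cdot\nabla)\bu] - \nu\nabla p_s(\delta^{k+1}\bu)$ --- is simply not available for this scheme. When you do start from \eqref{eq:newpressure}, the right-hand side contains both $\tfrac{1}{2\tau}\tD^k\bu^{n+1}$ and $\nabla\delta^{k+1}p^n$. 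The first cannot be ``swept into a bucket'': estimated directly it carries a factor $\tau^{-1}$ that nothing absorbs. The paper eliminates it by substituting the momentum equation \eqref{eq:scheme-u}, which trades $\tfrac{1}{2\tau}\tD^k\bu^{n+1}$ for $\nu\Delta\delta^k\bu^{n+1}$, the advection term, $\hf$, and crucially $-\nabla\delta^{k+1}p^n$ again. The resulting relation couples $\delta^k p^{n+1}$ to $\delta^{k+1}p^n$, i.e.\ it is a genuine recursion in the pressure. The paper's proof hinges on recognizing that, in the variable $\beta_n=\nabla\delta^{k+1}p^n-\nabla p_s(\nu\delta^{k+1}\bu^n)$, this recursion is a contraction with ratio $\tfrac{k-1}{k}<1$, so unrolling it yields a convergent geometric series over the history of the forcing/advection terms $\hh^i$; only then do Lemma~\ref{lem:stokes-technical} and Young's inequality produce the headline constant $\nu\sqrt{(\nicefrac{1}{2}+\epsilon)/(1-\epsilon)}$. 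Your proposal contains no trace of this recursion or of the geometric series, and without it the argument does not close for Scheme~\ref{scheme:fem}.

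Two smaller issues: the advection term in the scheme is $(\delta^{k+1}\hu^n\cdot\nabla)\delta^{k+1}\hu^n$, i.e.\ a product of extrapolations of the \emph{scaled} velocity, not $\delta^{k+1}[(\bu^n\cdot\nabla)\bu^n]$; getting the mixed bound $C\norm{\nabla\hu^n}^2\norm{\nabla\bu^n}^2$ relies on this structure together with $\norm{\delta^{k+1}\bu^n}\leq C$. And the $C(1+N)$ term arises from summing the uniformly bounded $\norm{\hh^i}$-contributions over the unrolled recursion, not merely from $\norm{\hf_1}\leq C_{\hf_1}$ over $N$ steps. Your treatment of the Stokes-pressure factor via Lemma~\ref{lem:stokes-technical} and the subsequent Young balancing is in the right spirit and matches the final step of the paper's proof, but it is applied to the wrong object until the recursion has been resolved.
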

\begin{proof}
For all $n\geq1$, we can apply $\nabla$ to Eq.~\eqref{eq:p-rec-definition}, test with an arbitrary $\nabla q$, $q \in H^1(\Omega)$, and eliminate $\nabla \psi^{n+1}$ by Eq.~\eqref{eq:psi-definition} to obtain:
\begin{align*}
\inner{ \nabla \delta^{k} p^{n+1}, \nabla q }
&=
\inner{ - \nu \nabla \nabla \cdot \delta^{k} \bu^{n+1} + \nabla \delta^{k+1} p^n + \frac{1}{2\tau} \tD^k \bu^{n+1} , \nabla q } 
.
\intertext{Further, using Eq.~\eqref{eq:scheme-u}, and introducing the variable $\hh^n = \hf(t^{n+k}, \delta^{k+1} \theta^n) - (\delta^{k+1} \hu^n \cdot \nabla) \delta^{k+1} \hu^n$ to collect terms yield}
\inner{ \nabla \delta^{k} p^{n+1}, \nabla q }&=
\inner{ \nu(\Delta - \nabla \nabla \cdot) \delta^{k} \bu^{n+1} 
- (\delta^{k+1} \hu^n \cdot \nabla) \delta^{k+1} \hu^n
+ \hf(t^{n+k}, \delta^{k+1} \theta^n)
, \nabla q}
\\
&=
\inner{ \nu(\Delta - \nabla \nabla \cdot) \delta^{k} \bu^{n+1} 
+ \hh^n
, \nabla q}
.
\end{align*}
Separating terms at time $t^{n+1}$ from $t^n$ yields
\begin{align}
\langle \nabla p^{n+1}
&
- \nu(\Delta - \nabla \nabla \cdot) \bu^{n+1}
, \nabla q \rangle
=
\frac{k-1}{k}
\inner{ 
\nabla p^n
-
\nu(\Delta - \nabla \nabla \cdot) \bu^{n}, \nabla q}
+
\frac{1}{k}
\inner{
\hh^n
, \nabla q}.
\label{eq:pressure-before-recursion}
\end{align}
Hence, by applying the operator $\delta^{k+1}$, we obtain for all $n\geq2$ the relation
\begin{align*}
\langle \nabla& \delta^{k+1} p^{n+1}
- \nabla p_s(\nu\delta^{k+1}\bu^{n+1})
, \nabla q \rangle
=
\frac{k-1}{k}
\inner{ 
\nabla \delta^{k+1} p^n - \nabla p_s(\nu\delta^{k+1}\bu^{n}), \nabla q}
+
\frac{1}{k}
\inner{
\delta^{k+1}\hh^n
, \nabla q},
\intertext{
which is a recursive equation in the term $\beta_n:=\nabla \delta^{k+1} p^n - \nabla p_s(\nu\delta^{k+1}\bu^{n})$.
Expanding the recursion yields
}
&\inner{\beta_{n+1},\,\nabla q}=
\left( \frac{k-1}{k} \right)^{n-1}\inner{\beta_2, \nabla q}
+  
\frac{1}{k}
\sum_{i=2}^{n} \left( \frac{k-1}{k} \right)^{n-i} \inner{ \delta^{k+1} \hh^i , \nabla q}
.
\end{align*}
We can further simplify
\begin{align*}
\inner{\beta_2,\nabla q} =
\langle \nabla \delta^{k+1} p^{2}
- \nabla p_s(\nu\delta^{k+1}\bu^{2})
, \nabla q \rangle
=
\frac{1}{k}
\langle 
\nabla p_s(\nu\bu^{1})
-
\nabla p^{1}
, \nabla q \rangle
+
\frac{k+1}{k}
\inner{
\hh^1
, \nabla q
}
,
\end{align*}
and hence if we set $\hh^0 = 0$, and use $k-1\geq1$, we obtain
\begin{align}
\nonumber
\langle \nabla \delta^{k+1}p^{n+1}
, \nabla q \rangle
&=
\inner{
\nabla  p_s (\nu\delta^{k+1} \bu^{n+1}), \nabla q}
+
\frac{1}{k}
\left(
\frac{k-1}{k}
\right)^{n-1}
\inner{ 
\nabla p_s(\nu \bu^{1})
+
\nabla 
p^1
, \nabla q} \\&\quad
+ 
\frac{1}{k}
\sum_{i=1}^{n}
\left( \frac{k-1}{k} \right)^{n-i}
\inner{
\delta^{k+1}
\hh^i
, \nabla q}
.
\label{eq:pressure-est15}
\end{align}
Note that the geometric series $ \sum_{i=0}^{n-1} \left( \frac{k-1}{k} \right)^{i} \leq k$, $k\geq1$, can be bounded independently of $n$ by $k$. Hence, using $q=\delta^{k+1} p^{n+1}$,
and applying Young's inequality yield 
\begin{align*}
\frac{1}{2}( 1- \epsilon )
&
\norm{\nabla \delta^{k+1}p^{n+1}
}^2
\leq
\frac{1}{2}
\norm{
\nabla  p_s (\nu\delta^{k+1} \bu^{n+1})}^2
+
\left( \frac{k-1}{k} \right)^{2n}
C
\norm{
\nabla 
p^1
- \nabla p_s(\nu  \bu^{1})}^2
\\&
+ 
C
\sum_{i=1}^{n}
\left( \frac{k-1}{k} \right)^{n+1-i}
\norm{
\delta^{k+1}
\hh^i
}^2
.
\end{align*}
Summing from $n=1$ to $m-1$ further yields
\begin{align*}
\sum_{n=1}^{m-1}
\norm{\nabla \delta^{k+1}p^{n+1}
}^2
&
\leq
\nu^2
\frac{\nicefrac{1}{2}+\epsilon}{1-\epsilon}
\sum_{n=1}^{m-1}
\norm{
\Delta \delta^{k+1} \bu^{n+1}
}^2
+
C
\sum_{n=1}^{m-1}
\norm{
\nabla \delta^{k+1} \bu^{n+1}
}^2
+
C
\sum_{i=1}^{m-1}
\norm{
\delta^{k+1}
\hh^i
}^2
,
\end{align*}
where we further used $\norm{ \nabla  p^1 - \nabla p_s(\nu \bu^{1})}^2 \leq C$ due to our stable initialization procedure, the boundedness of the geometric series, and applied Lemma~\ref{lem:stokes-technical} to estimate $p_s(\nu \delta^{k+1}\bu^{n+1})$.
It remains to show the boundedness of $\norm{\delta^{k+1}\hh^i}$.
With Ladyzhenskaya's and Young's inequality, in combination with $\norm{\delta^{k+1} \bu^n} \leq C$, we obtain
\begin{align*}
\norm{
\hh^n
}^2
&
\leq
C
+
\alpha^2 C \norm{\delta^{k+1}\theta^n}^2
+
\norm{
\delta^{k+1} \hu^n
}^2_{L^4(\Omega)}
\norm{
\nabla \delta^{k+1}
\hu^n
}^2_{L^4(\Omega)}
\\
&\leq
C
+
\alpha^2 C \norm{\delta^{k+1}\theta^n}^2
+
C
\norm{
\nabla \delta^{k+1}
\hu^n
}^4
+
\nu^2
\tilde{\epsilon}
\norm{
\Delta \delta^{k+1}
\hu^n
}^2
\\
&\leq
C
+
\alpha^2 C \norm{\delta^{k+1}\theta^n}^2
+
C
\sum_{i=0}^1
\left(
\norm{
\nabla
\hu^{n-i}
}^2
\norm{
\nabla
\bu^{n-i}
}^2
+
\nu^2
\tilde{\epsilon}
\norm{
\Delta 
\bu^{n-i}
}^2
\right)
,
\end{align*}
where we introduced the parameter $\tilde\epsilon > 0$.
Hence we have
\begin{align}
\sum_{n=1}^{m-1}
&
\norm{\nabla \delta^{k+1}p^{n+1}
}^2
\leq
\nu^2
\frac{\nicefrac{1}{2}+\epsilon}{1-\epsilon}
\sum_{n=1}^{m-1}
\norm{
\Delta \delta^{k+1} \bu^{n+1}
}^2
+\nu^2 \tilde{\epsilon}C
\sum_{n=0}^{m-1}
\norm{
\Delta \bu^{n}
}^2
+ C  \sum_{n=1}^{m-1} \norm{\delta^{k+1}\theta^n}^2
\nonumber
\\&
+ C \sum_{n=1}^{m-1} \norm{ \nabla \delta^{k+1} \bu^{n+1} }^2
+ C \sum_{n=0}^{m-1} 
\norm{ \nabla \hu^{n} }^2
\norm{ \nabla \bu^{n} }^2
+ C (1+ N )
.
\label{eq:est14}
\end{align}
The final estimate now follows immediately from Cauchy-Schwarz and Young by
\begin{align*}
\inner{ \nabla \delta^{k+1} p^n , \Delta \delta^{k+1} \bu^{n+1} }
\leq
\frac{1}{2\nu}
\left(
\frac{1-\epsilon}{\nicefrac{1}{2}+\epsilon}
\right)^{\frac{1}{2}}
\norm{
\nabla \delta^{k+1} p^n
}^2
+
\frac{\nu}{2}
\left(
\frac{\nicefrac{1}{2}+\epsilon}{1-\epsilon}
\right)^{\frac{1}{2}}
\norm{
\Delta \delta^{k+1} \bu^{n+1}
}^2
,
\end{align*}
summing over the entire equation from $n=1$ to $m-1$ and plugging in the estimate of Eq.~\eqref{eq:est14}.
\end{proof}
In terms of Lemma~\ref{lem:pressure-stability-estimate}, we can readily show the following stability estimate:
\begin{lemma}[Stability estimate  for the Stokes equation]
    \label{lem:stability-stokes}
    Given Assumptions~\ref{ass:domain}, \ref{ass:constants}, \ref{ass:solution}, and \ref{ass:bootstrapping}, we have
    \begin{align*}
        \norm{\nabla \bu^m}^2
        +  \tau\sum_{n=0}^{m} \norm{\Delta \bu^n}^2
        \leq C_\hu^2
    \end{align*}
    where $C_\hu$ depends on $M$ and $T$, but not on $\tau$.
\end{lemma}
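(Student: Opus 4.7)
The strategy mirrors the heat-equation stability proof (Lemma~\ref{lem:stability-heat}) but with the essential twist that the pressure gradient and the temperature-dependent right-hand side must be controlled. I would test the momentum equation \eqref{eq:scheme-u} with $-\Delta \delta^{k+1} \bu^{n+1}$, integrate by parts, and treat the four resulting contributions separately. On the time-derivative term $\inner{\nabla \tD^k \ubp, \nabla \delta^{k+1} \bu^{n+1}}$ I would apply \eqref{eq:factorizationD} of Lemma~\ref{lem:abb-equation-2} to extract a coercive contribution $a_k(\norm{\nabla \bu^{n+1}}^2 - \norm{\nabla \bu^n}^2)$ plus telescoping quadratic remainders. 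On the viscous term $-2\tau\nu\inner{\Delta \delta^k \bu^{n+1}, \Delta \delta^{k+1}\bu^{n+1}}$ I would apply \eqref{eq:factorizationextr2} of Lemma~\ref{lem:abb-equation-2} with $\epsilon = 1/k^2$ to produce a positive $2\tau\nu\hat{a}_k \norm{\Delta \delta^{k+1}\bu^{n+1}}^2$ and a telescoping $\norm{\Delta \bu^{n+1}}^2$ contribution.

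The nonlinear convective term would be estimated via \eqref{eq:estimate-advection-term-u} of Lemma~\ref{lem:estimate-advection-term} (noting that $\norm{\delta^{k+1}\hu^n} \le C$ by Lemma~\ref{lem:weak-stability}), yielding two small fractions of $\norm{\Delta \delta^{k+1}\bu^{n+1}}^2$ and $\norm{\Delta \delta^{k+1}\hu^n}^2$ that can be absorbed, plus a harmless $C\norm{\nabla \delta^{k+1}\hu^n}^2 \norm{\nabla \delta^{k+1}\bu^n}^2$ term. The right-hand side $2\tau\inner{\hf(t^{n+k},\delta^{k+1}\theta^n), -\Delta\delta^{k+1}\bu^{n+1}}$ splits by Cauchy–Schwarz and Young into $2\tau\nu\epsilon \norm{\Delta\delta^{k+1}\bu^{n+1}}^2 + C\tau(C_{\hf_1}^2 + \alpha^2 \norm{\delta^{k+1}\theta^n}^2)$, so that, after summing from $n=1$ to $m-1$, the temperature contribution is controlled by Lemma~\ref{lem:stability-heat} and is of order $T$.

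The decisive step is the pressure. Summed over $n$, I would invoke Lemma~\ref{lem:pressure-stability-estimate} directly to bound $2\tau\sum \inner{\nabla \delta^{k+1}p^n, \Delta \delta^{k+1}\bu^{n+1}}$ by $2\tau\nu\sqrt{(\tfrac{1}{2}+\epsilon)/(1-\epsilon)}\sum\norm{\Delta \delta^{k+1}\bu^{n+1}}^2$ plus lower-order terms involving $\sum\norm{\delta^{k+1}\theta^n}^2$, $\sum\norm{\nabla \delta^{k+1}\bu^n}^2$, $\sum\norm{\nabla \hu^n}^2\norm{\nabla \bu^n}^2$, and the constant $1+N = 1 + T/\tau$. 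The first of these is the critical one: combined with the viscous coercivity it produces the net coefficient $2\tau\nu(\hat{a}_k - \sqrt{(\tfrac{1}{2}+\epsilon)/(1-\epsilon)} - O(\epsilon))$ in front of $\sum \norm{\Delta\delta^{k+1}\bu^{n+1}}^2$, which is where the assumption $k\ge 3$ is actually used: from the table in the remark following Lemma~\ref{lem:abb-equation-2}, $\hat{a}_3 \approx 0.712 > \sqrt{1/2} \approx 0.707$, so for $\epsilon$ small enough the net coefficient is strictly positive and absorbs the pressure term.

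After these absorptions, collecting telescoping terms and using $N\tau = T$ to tame the $C(1+N)$ contribution (the factor of $\tau$ in front cancels $N$), one arrives at an inequality of the form
\begin{align*}
a_k\norm{\nabla \bu^m}^2 + c\,\tau\sum_{n=1}^{m-1}\norm{\Delta\bu^{n+1}}^2 \le C_0 + \tau\sum_{n=1}^{m-1}\Bigl(c_n\norm{\nabla\delta^{k+1}\bu^n}^2 + \tilde{c}_n\Bigr),
\end{align*}
where $\tau\sum c_n \lesssim \tau\sum\norm{\nabla\hu^n}^2$ is bounded via Lemma~\ref{lem:weak-stability} and $\tau\sum\tilde c_n$ is bounded via Lemma~\ref{lem:stability-heat} and the weak stability. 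Lemma~\ref{lem:Gronwall-reformulated}, applied with $u^n = \bu^n$ and $v^n = 0$, together with the initial bound $\norm{\nabla \bu^1},\norm{\Delta \bu^1} \lesssim 1$ from Assumption~\ref{ass:bootstrapping}, then closes the estimate and yields the claimed constant $C_\hu$ depending only on $M$ and $T$. The main obstacle throughout is the delicate balancing of constants for the pressure absorption, which is precisely why inequality \eqref{eq:factorizationextr2} with its sharper $\hat{a}_k$ is needed in place of the coarser bound from \cite{huang2023stability}.
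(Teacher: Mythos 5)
Your proposal is correct and follows essentially the same route as the paper's proof: testing \eqref{eq:scheme-u} with $(-\Delta)\delta^{k+1}\bu^{n+1}$, applying both inequalities of Lemma~\ref{lem:abb-equation-2}, estimating the advection and right-hand-side terms via Lemma~\ref{lem:estimate-advection-term} and Lemma~\ref{lem:stability-heat}, absorbing the pressure through Lemma~\ref{lem:pressure-stability-estimate} with the decisive observation that $\hat{a}_k > \nicefrac{1}{\sqrt{2}}$ for $k\geq 3$, and closing with Lemma~\ref{lem:Gronwall-reformulated} and Assumption~\ref{ass:bootstrapping}. The only cosmetic difference is in how the quartic gradient term from the advection estimate is split before Gr\"onwall, which does not affect the argument.
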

\begin{proof}
    We test Eq.~\eqref{eq:scheme-u} with $(-\Delta) \delta^{k+1} \bu^{n+1}$ and obtain
    \begin{align*}
         &
        \inner{ \tD^k \bu^{n+1} ,  (-\Delta) \delta^{k+1} \bu^{n+1} }
        \!+\! 2 \tau \nu  \inner{\Delta \delta^k \bu^{n+1}, \Delta \delta^{k+1} \bu^{n+1}}
        \!+\! 2 \tau \inner{(\delta^{k+1} \hu^n \cdot \nabla) \delta^{k+1} \hu^n, (-\Delta) \delta^{k+1} \bu^{n+1}}
        \\&\quad
        + 2 \tau \inner{\nabla \delta^{k+1} p^n, (-\Delta) \delta^{k+1} \bu^{n+1} }
        = 2 \tau \inner{\hf(t^{n+k}, \delta^{k+1} \theta^n), (-\Delta) \delta^{k+1} \bu^{n+1} }
        .
    \end{align*}
    We start estimating the advection term by Eq.~\eqref{eq:estimate-advection-term-u} of Lemma~\ref{lem:estimate-advection-term}, i.e.
    \begin{align}
    (I) :=
    | \langle(\delta^{k+1} \hu^n\cdot \nabla) \delta^{k+1} \hu^n, &(-\Delta) \delta^{k+1} \bu^{n+1}\rangle |\,
        \leq 
        \epsilon
        \norm{\Delta \delta^{k+1} \bu^{n+1}}^2
        +
        \epsilon
        \norm{\Delta \delta^{k+1} \hu^{n}}^2
        +
        C
        \norm{\nabla \delta^{k+1} \hu^n}^4,
        \nonumber
    \end{align}
    
            where we used $\norm{\delta^{k+1} \hu^n} \leq C$ by Lemma~\ref{lem:weak-stability}.
            The second term can be trivially simplified to $\norm{ \Delta \delta^{k+1} \hu^n }^2 \leq C ( \norm{\Delta \bu^n}^2 + \norm{\Delta \bu^{n-1}}^2) $.
            Similarly, for the last term, we get
            $\norm{ \nabla \delta^{k+1} \hu^n }^4 \leq C ( 
            \norm{\nabla \hu^n}^4
            + 
            \norm{\nabla \hu^{n-1}}^4
            ) $.
            To apply Lemma~\ref{lem:Gronwall-reformulated}, we split the quartic terms into $\norm{\nabla \hu^{i}}^2$ taking the role of the $d_n$, since weighted by $\tau$, it is summable by Lemma~\ref{lem:weak-stability}, and into $\norm{\nabla \bu^{i}}^2$. Thus, using the boundedness of $|\eta^n|$, we estimate
            $\norm{ \nabla \delta^{k+1} \hu^n }^4 \leq C ( 
            \norm{\nabla \hu^n}^2
            \norm{\nabla \bu^n}^2 
            + 
            \norm{\nabla \hu^{n-1}}^2
            \norm{\nabla \bu^{n-1}}^2
            ) $.
            Hence, after redefining $\epsilon$, we obtain
        
    \begin{align}
        (I)
        &
        \leq 
        \epsilon
        \norm{\Delta \delta^{k+1} \bu^{n+1}}^2
        \!
        +
        \!
        \epsilon
        \sum_{i=0}^1
        \norm{\Delta \bu^{n-i}}^2
        \!
        +
        \!
        C
        \sum_{i=0}^1
        \norm{\nabla \hu^{n-i}}^2
        \norm{\nabla \bu^{n-i}}^2
        .
        \label{eq:iib-stability-stokes-advection}
    \end{align}
    Estimating the right-hand side term from the heat equation yields
    \begin{align}
        \abs{
            \inner{\hf(\delta^{k+1} \theta^n),
                (-\Delta) \delta^{k+1} \bu^{n+1} }
        }
         &
        \leq
        \epsilon
        \norm{\Delta \delta^{k+1} \bu^{n+1} }^2
        +
        C
        \norm{\hf(\delta^{k+1} \theta^n)}^2
        \nonumber
        \\
         &
        \leq
        \epsilon
        \norm{\Delta \delta^{k+1} \bu^{n+1} }^2
        +
        C
        (C^2_{\hf_1} + \alpha^2 C^2_\theta)
        .
        \label{eq:iib-proof-stability-stokes-rhs}
    \end{align}
    Summing from $n=1$ to $m-1$,
    applying Lemma~\ref{lem:abb-equation-2},
    Lemma~\ref{lem:pressure-stability-estimate},
    and the estimates in Eqs.~\eqref{eq:iib-stability-stokes-advection}, and \eqref{eq:iib-proof-stability-stokes-rhs},
    in combination with omitting some positive terms and simplifying constants, yield
    \begin{align*}
        a_k
        \norm{\nabla \bu^{m}}^2
        &
        +
        2 \tau
        \hat{\epsilon}_S
        \sum_{n=1}^{m-1}
        \norm{\Delta \delta^{k+1} \bu^{n+1} }^2
        +
        2 \tau (\hat{f}_k - 2 \epsilon)
        \sum_{n=1}^{m-1}
        \norm{\Delta \bu^{n+1}}^2
        \leq
        a_k\norm{\nabla\bu^{1}}^2
        + 2\tau \hat{d}_k \norm{\Delta \bu^1}^2
        \\
        &
        + \norm{b_k \nabla \bu^{1} - c_k \nabla \bu^{0}}^2
        + 2\tau C
        \sum_{n=0}^{m-1}
        (1 + \norm{\nabla \hu^{n}}^2) \norm{\nabla \bu^{n}}^2
        + 2\tau C
        \sum_{n=0}^{m-1}
        \norm{\nabla \delta^{k+1} \bu^n}^2
        +
        C
        ,
    \end{align*}
    where we defined
    \[
        \hat{\epsilon}_S :=
        \hat{a}
        - \sqrt{\frac{\nicefrac{1}{2}+\epsilon}{1-\epsilon}}
        - 3\epsilon
        .
    \]
    Since $\lim_{\epsilon\to0}\hat{\epsilon}_S = \hat{a} -\nicefrac{1}{\sqrt{2}} \geq 0.712 - 0.707 > 0$ for $k\geq 3>\frac{3}{2}+\sqrt{2}$, the lower limit for $\sqrt{2}\hat{a}>1$, we can find some $\epsilon > 0$ small enough to obtain a positive coefficient in front of $\norm{\Delta \bu^{n+1}}^2$.

    Using Lemma~\ref{lem:Gronwall-reformulated} with $  2 \tau C \sum_{n=0}^{m-1} (1 + \norm{\nabla \hu^n}^2) \leq 2 C (T + M) =: M_a$, $M_b = M_c = 0$ and $ M_d = 2 T C$, we obtain the claim.
\end{proof}

\begin{remark}We remark that Lemma~\ref{lem:stability-stokes} in combination with Lemma~\ref{lem:weak-stability} directly implies the stability of
\[
\norm{\nabla \hu^m}^2 + \tau\sum_{n=0}^{m} \norm{\Delta \hu^n}^2 \leq M C_\hu
\quad
\textmd{ and }
\quad
\norm{\nabla \delta^{k+1} \hu^m}^2 + \tau\sum_{n=0}^{m} \norm{\Delta \delta^{k+1} \hu^n}^2 \leq C C_\hu
.
\]
\end{remark}

\subsection{Error estimates}
We now turn towards the error estimate and introduce the error evolution system. 
The error for the heat equation is given by
\begin{align}
    \tD^l e_\theta^{n+1}
    +               & 2 \tau R_\star^{n+1}
    - 2 \tau \kappa \Delta \delta^{l} e_\theta^{n+1}
    =
    -
    \Xi_{n+1}(\theta)
    \label{eq:error-T}
    \intertext{and for the Stokes equation by}
    \tD^k \be^{n+1}_{\hu} &- 2 \tau \nu \Delta \delta^k \be^{n+1}_{\hu}
                     + 2 \tau \nabla \delta^{k+1} e^n_{p}
    + 2 \tau S_\star^{n+1}
    + 2 \tau S_1^{n+1}
    + 2 \tau S_2^{n+1}
    =
    0
    ,
    \label{eq:error-u}
    \intertext{with the right-hand side terms for the Stokes equation}
    S_1^{n+1} =     &\,
    (\delta^{k+1} \hu^n \cdot \nabla) \delta^{k+1} \hu^{n}
    - (\hu^{n+k}_\star \cdot \nabla) \hu^{n+k}_\star,
        \label{eq:error-S1}
    \\
    S_2^{n+1} =     &
    \left(
    \nabla \delta^{k+1} p^n_\star - \nabla p^{n+k}_\star
    \right)
    +
    \frac{1}{2\tau} ( \tD^k \hu^{n+1}_\star - 2 \tau \dot{\hu}^{n+k}_\star )
    - \Delta
    \left(
    \delta^k \hu^{n+1}_\star - \hu^{n+k}_\star
    \right)
    ,
        \label{eq:error-S2}
    \intertext{and the coupling terms}
    R_\star^{n+1} &= 
    \delta^{l+1} \hu^{n} \cdot \nabla \delta^{l+1} \theta^{n}
    -
    \hu^{n+l}_\star \cdot \nabla \theta^{n+l}_\star
    \nonumber
    \\
    & =               
    \delta^{l+1} \he^n_\hu \cdot \nabla \delta^{l+1} \theta^n
    + \delta^{l+1} \hu^n_\star \cdot \nabla \delta^{l+1} e_\theta^n
    \nonumber
    \\&\qquad
    - (\hu^{n+l}_\star - \delta^{l+1} \hu^n_\star) \cdot \nabla \theta^{n+l}_\star
    - \delta^{l+1} \hu^n_\star \cdot \nabla ( \theta^{n+l}_\star - \delta^{l+1} \theta^{n}_\star),
    \quad&\textmd{and}
    \label{eq:error-Rstar}
    \\
        S_\star^{n+1} &=\, 
    \hf_2(\theta^{n+k}_\star) - \hf_2(\delta^{k+1} \theta^{n}).
    \label{eq:error-Sstar}
\end{align}

For the error estimates, we have to show that the scaling with $\eta^n$ does not negatively impact the asymptotical convergence of our scheme.
For this, the following identity will be heavily used, connecting the error of $\he^n$ to $\be^n$:
\begin{corollary}\label{cor:e-ebar}
    Assume there exists a bound $\abs{1-\eta^n} \leq C_0 \tau^2$, and $\norm{\nabla \bu^n}^2 \leq C_\hu^2$ holds. Then, for all $k \geq 0$, we have
    \begin{align}
        \norm{\nabla\he^n_\hu}^2
         & \leq
        2
        \norm{\nabla\be^n_\hu}^2
        +
        2 C_0^2 C_\hu^2 \tau^4
        ,
        \quad
        \textmd{ and }
        \quad
        \norm{\nabla\delta^k\he^n_\hu}^2
         \leq
        2
        \norm{\nabla\delta^k\be^n_\hu}^2
        +
        2
        C
        C_0^2 C_\hu^2 \tau^4
        .
        \label{eq:e-bar}
    \end{align}
\end{corollary}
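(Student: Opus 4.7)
The plan is to prove both inequalities by a direct algebraic decomposition of $\he^n_\hu$ in terms of $\be^n_\hu$ and a scaling correction, followed by the elementary inequality $(a+b)^2 \leq 2a^2 + 2b^2$. This corollary is essentially bookkeeping: it quantifies that the error introduced by multiplying by $\eta^n$ is harmless as long as $\eta^n$ is close to $1$ at rate $\tau^2$.

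First I would write, using $\hu^n = \eta^n \bu^n$ from Eq.~\eqref{eq:scheme-scale}, the splitting
\[
\he^n_\hu = \hu^n - \hu^n_\star = \eta^n \bu^n - \hu^n_\star = (\eta^n - 1)\bu^n + (\bu^n - \hu^n_\star) = (\eta^n - 1)\bu^n + \be^n_\hu.
\]
Taking $\nabla$, applying the triangle inequality in the form $\norm{a+b}^2 \leq 2\norm{a}^2 + 2\norm{b}^2$, and invoking the hypotheses $|1 - \eta^n| \leq C_0 \tau^2$ and $\norm{\nabla \bu^n}^2 \leq C_\hu^2$ gives the first inequality in \eqref{eq:e-bar} immediately, since
\[
\norm{\nabla \he^n_\hu}^2 \leq 2\norm{\nabla \be^n_\hu}^2 + 2(\eta^n - 1)^2 \norm{\nabla \bu^n}^2 \leq 2\norm{\nabla \be^n_\hu}^2 + 2 C_0^2 C_\hu^2 \tau^4.
\]

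For the second inequality, I would apply the linear extrapolation operator $\delta^k$ from \eqref{eq:def-D-delta} to the splitting and distribute:
\[
\delta^k \he^n_\hu = \delta^k \be^n_\hu + k(\eta^n - 1)\bu^n - (k-1)(\eta^{n-1} - 1)\bu^{n-1}.
\]
Taking $\nabla$ and applying $\norm{a+b}^2 \leq 2\norm{a}^2 + 2\norm{b}^2$ once to separate the $\delta^k \be^n_\hu$ contribution, then again to separate the two scaling terms, together with the bounds on $|1 - \eta^n|$, $|1 - \eta^{n-1}|$, and $\norm{\nabla \bu^n}$, $\norm{\nabla \bu^{n-1}}$, produces the claimed estimate with a constant depending only on $k$ (absorbed into the generic $C$).

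There is essentially no obstacle here: the only subtlety is to use the \emph{unscaled} velocity bound $\norm{\nabla \bu^n} \leq C_\hu$ provided by the hypothesis rather than a bound on $\norm{\nabla \hu^n}$ (which would itself require $\eta^n$ to be bounded independently). The result is stated as a corollary rather than a lemma precisely because it follows from a one-line identity together with the assumed $\tau^2$-closeness of $\eta^n$ to $1$; its role in the sequel is to let us transfer stability and error bounds freely between the scaled and unscaled velocity errors.
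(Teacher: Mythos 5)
Your proposal is correct and matches the paper's own one-line argument: the paper also splits $\he^n_\hu = \be^n_\hu + (\eta^n-1)\bu^n$, applies the triangle inequality together with $\abs{1-\eta^n}\norm{\nabla\bu^n} \leq C_0 C_\hu \tau^2$, and the $\delta^k$ case follows by linearity exactly as you describe. No gaps.
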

\begin{proof} Follows directly from $
        \norm{\nabla\he^n_\hu}
        \leq  \norm{\nabla\be^n_\hu} + \norm{\nabla(\hu^n - \bu^n)}
        \leq  \norm{\nabla\be^n_\hu} + \abs{1-\eta^n}\norm{\nabla\bu^n}
    $
    .
\end{proof}
We will later show that we can always find an appropriate constant $C_0$ for small enough time step sizes $\tau\leq\tau_0$.
But first, we will use it as an assumption to show the following key estimate that will be needed several times; firstly, to estimate $\norm{S^{n+1}_1}^2$, and secondly, to estimate $\delta^{k+1}p^n$.
\begin{lemma}\label{lem:unablu-estimate}
    In addition to Assumptions~\ref{ass:domain}, \ref{ass:constants}, \ref{ass:solution}, and \ref{ass:bootstrapping}, we suppose that there are bounds $C_0, c_\xi > 0$ such that for all $0\leq m\leq N-1$ we have $\abs{1 - \eta^n} \leq \tau^2 C_0,$ and $\xi^n \geq c_\xi$.
    Then, the estimate
    \begin{align*}
        \lVert
        (\hu^{n+k}_\star \cdot \nabla) \hu^{n+k}_\star
        -
        (\delta^{k+1} \hu^n \cdot  \nabla) \delta^{k+1} \hu^n
        \rVert^2
         &
        \leq
        C
        (1+\norm{\Delta \delta^{k+1} {\hu}^{n} }^2)
        (
        \norm{ \nabla \delta^{k+1} \be^n_\hu }^2
        + \tau^4 C_\hu^2 C_0^2
        )
        \\&\qquad
        +
        C
        \norm{
            \zeta_{n,k+1}( \nabla \hu )
        }^2
    \end{align*}
    holds for a constant $C > 0 $, independent of $\tau$ and $C_0$.
\end{lemma}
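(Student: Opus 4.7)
The plan is to rewrite the difference in bilinear form as a telescoping sum through the intermediate quantity $\delta^{k+1}\hu^n_\star$, separating consistency errors from numerical errors, and then to estimate each resulting piece using Ladyzhenskaya's inequality (Lemma~\ref{lem:vectorial-ladyzhenskaya-inequality}), the 2D Sobolev embedding $H^2(\Omega)\hookrightarrow L^\infty(\Omega)$, and Corollary~\ref{cor:e-ebar} to pass from $\he_\hu$ to $\be_\hu$.

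First I would decompose
\[
(\hu^{n+k}_\star \cdot \nabla)\hu^{n+k}_\star - (\delta^{k+1}\hu^n \cdot \nabla)\delta^{k+1}\hu^n = T_1 + T_2,
\]
with
$T_1 := (\hu^{n+k}_\star \cdot \nabla)\hu^{n+k}_\star - (\delta^{k+1}\hu^n_\star \cdot \nabla)\delta^{k+1}\hu^n_\star$
and $T_2 := (\delta^{k+1}\hu^n_\star \cdot \nabla)\delta^{k+1}\hu^n_\star - (\delta^{k+1}\hu^n \cdot \nabla)\delta^{k+1}\hu^n$. By Lemma~\ref{lem:time-derivative-and-extrapolation}, $\delta^{k+1}\hu^n_\star - \hu^{n+k}_\star = \zeta_{n,k+1}(\hu)$, so a standard bilinear split yields $T_1 = -(\zeta_{n,k+1}(\hu)\cdot\nabla)\hu^{n+k}_\star - (\delta^{k+1}\hu^n_\star \cdot \nabla)\zeta_{n,k+1}(\hu)$. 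Both $\hu^{n+k}_\star$ and $\delta^{k+1}\hu^n_\star$ are uniformly bounded in $H^2(\Omega)$ by Assumption~\ref{ass:solution}, hence in $L^\infty\cap W^{1,4}$. Thus H\"older's and Ladyzhenskaya's inequalities, together with $\nabla \zeta_{n,k+1}(\hu) = \zeta_{n,k+1}(\nabla \hu)$, directly give $\norm{T_1}^2 \leq C\norm{\zeta_{n,k+1}(\nabla\hu)}^2$.

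For $T_2$, I would analogously write
$T_2 = (\delta^{k+1}\he^n_\hu \cdot \nabla)\delta^{k+1}\hu^n_\star + (\delta^{k+1}\hu^n \cdot \nabla)\delta^{k+1}\he^n_\hu$. The first summand again uses the $H^2$-boundedness of the exact solution to produce $\lesssim \norm{\nabla\delta^{k+1}\he^n_\hu}^2$. For the second summand, which is the main obstacle, no a priori $L^\infty$ bound on $\delta^{k+1}\hu^n$ is available. Here I would invoke the two-dimensional embedding $H^2(\Omega)\cap (H^1_0(\Omega))^2 \hookrightarrow L^\infty(\Omega)$ in combination with the elliptic regularity granted by Assumption~\ref{ass:domain}, yielding $\norm{\delta^{k+1}\hu^n}_{L^\infty}^2 \leq C\norm{\Delta \delta^{k+1}\hu^n}^2$. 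This step is what forces the multiplicative factor $(1+\norm{\Delta\delta^{k+1}\hu^n}^2)$ in the target bound and is the reason the analysis is confined to $d=2$.

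To conclude, I would apply Corollary~\ref{cor:e-ebar}, which uses the assumed bound $|1-\eta^n|\leq C_0\tau^2$, to replace $\norm{\nabla\delta^{k+1}\he^n_\hu}^2$ by $2\norm{\nabla\delta^{k+1}\be^n_\hu}^2 + C\,\tau^4 C_0^2 C_\hu^2$. Collecting the estimates for $T_1$ and $T_2$ via $\norm{T_1+T_2}^2 \leq 2\norm{T_1}^2 + 2\norm{T_2}^2$ then produces exactly the claimed inequality, with the $\zeta$ term appearing cleanly outside the $(1+\norm{\Delta\delta^{k+1}\hu^n}^2)$ factor precisely because the $\zeta$-contributions are confined to $T_1$, where the $L^\infty$ estimate on $\delta^{k+1}\hu^n$ is not needed.
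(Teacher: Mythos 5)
Your proposal is correct and follows essentially the same route as the paper: the same two-term decomposition through $\delta^{k+1}\hu^n_\star$, the consistency term bounded by $\norm{\zeta_{n,k+1}(\nabla\hu)}$ via the $H^2$-boundedness of the exact solution, the error term bounded using the 2D embeddings $H^1\hookrightarrow L^4$ and $H^2\hookrightarrow L^\infty$ to extract the factor $(1+\norm{\Delta\delta^{k+1}\hu^n})$, and Corollary~\ref{cor:e-ebar} to convert $\he_\hu$ into $\be_\hu$ plus the $\tau^4 C_0^2 C_\hu^2$ remainder. The only (immaterial) difference is that in the bilinear split of $T_2$ you place the numerical solution in the advecting slot and the exact solution in the advected slot of the first summand, whereas the paper does the reverse; both variants yield the same bound.
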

\begin{proof}
    Adding and subtracting the term $(\hu^{n+k}_\star \cdot \nabla) \hu^{n+k}_\star$ and applying the triangle inequality give
    \begin{align*}
        \lVert
        (\hu^{n+k}_\star \cdot \nabla) \hu^{n+k}_\star
        -
         &
        (\delta^{k+1} \hu^n \cdot  \nabla) \delta^{k+1} \hu^n
        \rVert^2
        \leq
        \norm{ (\delta^{k+1} \hu^n_\star \cdot \nabla) \delta^{k+1} \hu^{n}_\star - (\hu^{n+k}_\star\cdot \nabla) \hu^{n+k}_\star }
        \\&\quad
        +
        \lVert
        (\delta^{k+1} \hu^n_\star \cdot  \nabla) \delta^{k+1} \hu^n_\star
        -
        (\delta^{k+1} \hu^n \cdot  \nabla) \delta^{k+1} \hu^n
        \rVert
        =: (I) + (II)
        .
    \end{align*}
    For the first term on the right, we have
    \begin{align}
        (I)
                \nonumber
         &
        =
        \norm{
            \big((\delta^{k+1} \hu^n_\star - \hu^{n+k}_\star)\cdot
            \nabla\big)
            \delta^{k+1}
            \hu^n_\star
            +
            (\hu^{n+k}_\star \cdot \nabla) (\delta^{k+1}\hu^n_\star - \hu^{n+k}_\star)
        }
        \nonumber
        \\
         &
        \leq
        \norm{
            \zeta_{n,k+1}( \hu )
            \cdot
            \nabla
            \delta^{k+1}
            \hu^n_\star
        }
        +
        \norm{
            \hu^{n+k}_\star
            \cdot
            \nabla  \zeta_{n,k+1}( \hu )
        }
        \nonumber
                \\
         &
        \lesssim
        \norm{
            \zeta_{n,k+1}( \nabla \hu )
        }
        \norm{
            \delta^{k+1}
            \hu^n_\star
        }_{2}
        +
        \norm{
            \hu^{n+k}_\star
        }_2
        \norm{
            \zeta_{n,k+1}( \nabla\hu )
        }
                        \lesssim
        \norm{
            \zeta_{n,k+1}( \nabla \hu )
        }
        \nonumber
        ,
    \end{align}
    where we used that both,
    $\norm{\delta^{k+1}\hu^n_\star}_{2}$
    and
    $\norm{\hu^{n+k}_\star}_{2}$,
    are bounded by our assumptions on the solution.
    For the second term, we have
    \begin{align*}
        (II) &= 
        \norm{
            (\delta^{k+1} \he^n_\hu \cdot \nabla) \delta^{k+1} \hu^{n}
            +
            (\delta^{k+1} \hu^n_\star \cdot\nabla) \delta^{k+1} \he^{n}_\hu
        }
        ,
        \\ & \lesssim
        \norm{ (\delta^{k+1} \he^n_\hu \cdot \nabla) \delta^{k+1} \hu^{n} }
        + \norm{ (\delta^{k+1} \hu^n_\star  \cdot\nabla) \delta^{k+1} \he^{n}_\hu }
        \\ & \lesssim
        \norm{ \delta^{k+1} \he^n_\hu }_{L^4(\Omega)}
        \norm{\nabla \delta^{k+1} \hu^{n} }_{L^4(\Omega)}
        +
        \norm{ \delta^{k+1} \hu^n_\star  }_{L^\infty(\Omega)}
        \norm{ \nabla \delta^{k+1} \he^{n}_\hu }.
                \intertext{Using the embeddings of $H^1(\Omega) \hookrightarrow L^4(\Omega)$, $H^2(\Omega) \hookrightarrow L^\infty(\Omega)$, and the assumed $H^2$-regularity give
        }
                              (II)   & \lesssim
        \norm{ \nabla \delta^{k+1} \he^n_\hu }
        \norm{\Delta \delta^{k+1} \hu^{n} }
        +
        \norm{ \delta^{k+1} \hu^n_\star  }_{2}
        \norm{ \nabla \delta^{k+1} \he^{n}_\hu }.
                \intertext{Again, using
            our boundedness conditions on the solution and Corollary~\ref{cor:e-ebar}, yield}
                              (II)   & \lesssim
        (1+\norm{\Delta \delta^{k+1} \hu^{n} })
        \norm{ \nabla \delta^{k+1} \he^n_\hu }
                                 \lesssim
        (1+\norm{\Delta \delta^{k+1} {\hu}^{n} })
        ( \norm{ \nabla \delta^{k+1} \be^n_\hu } + \tau^2 C_0 C_\hu )
        .
    \end{align*}
    Collecting both estimates for $(I)$ and $(II)$, squaring both sides, and redefining constants yield the result.
\end{proof}

\begin{lemma}\label{lem:error-estimate-pressure}
Given Assumptions~\ref{ass:domain}, \ref{ass:constants}, \ref{ass:solution}, and \ref{ass:bootstrapping}, and constants
    $C_0, c_\xi > 0$ as well as an $m\in\Ii$, such that for all $n\leq m-1$
    we have $\abs{1 - \eta^n} \leq \tau^2 C_0$, and $\xi^n \geq c_\xi$.
    Then, there is a $C\geq0$ independent of $C_0$, $\tau$ and $m$ such that
    \begin{align*}
    \sum_{n=1}^{m-1}
    \inner{
    \nabla \delta^{k+1} e_p^n
    ,
    \Delta \delta^{k+1} \be_\hu^{n+1}
    }
    &
    \leq
    \nu
    \sqrt{
    \frac{\nicefrac{1}{2}+\epsilon}{1-\epsilon}
    }
    \sum_{n=1}^{m-1}
    \norm{
    \Delta \delta^{k+1} \be_\hu^{n+1}
    }^2
    + C \sum_{n=1}^{m-1} \norm{\delta^{k+1}e_\theta^n}
    + C \tau^3
    \\&
    \qquad
    +
    C 
    \sum_{n=1}^{m-1}
    (1+\norm{\Delta \delta^{k+1} {\hu}^{n} }^2) ( \norm{ \nabla \delta^{k+1} \be^n_\hu }^2 + \tau^4 C_0^2 C_\hu^2
    )
    .
    \end{align*}
\end{lemma}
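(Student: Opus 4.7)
The strategy mirrors the proof of Lemma~\ref{lem:pressure-stability-estimate}, but is applied to the error system rather than the solution itself. The continuous solution satisfies $\nabla p_\star + \nu \nabla\times\nabla\times\hu_\star = \hf(t,\theta_\star) - (\hu_\star\cdot\nabla)\hu_\star$ pointwise in time. Evaluating this at $t^{n+k}$ and writing it in the same algebraic form as Eq.~\eqref{eq:pressure-before-recursion} (with consistency remainders from Lemma~\ref{lem:time-derivative-and-extrapolation} absorbing the replacement of $\tD^k \hu_\star/(2\tau)$ and $\delta^k \hu_\star$ by $\partial_t \hu_\star^{n+k}$ and $\hu_\star^{n+k}$), then subtracting from the discrete recursion~\eqref{eq:pressure-before-recursion} yields
\begin{align*}
\inner{\nabla e_p^{n+1} - \nabla p_s(\nu\be_\hu^{n+1}),\nabla q}
= \tfrac{k-1}{k}\inner{\nabla e_p^{n}-\nabla p_s(\nu\be_\hu^{n}),\nabla q}
+ \tfrac{1}{k}\inner{\hh_e^n + \rho^n,\nabla q},
\end{align*}
where $\hh_e^n$ collects the source-term differences $\hf(t^{n+k},\delta^{k+1}\theta^n)-\hf(t^{n+k},\theta^{n+k}_\star) - \big[(\delta^{k+1}\hu^n\cdot\nabla)\delta^{k+1}\hu^n - (\hu^{n+k}_\star\cdot\nabla)\hu^{n+k}_\star\big]$, and $\rho^n$ gathers the time-discretization remainders $\Xi_{n+1}(\hu)$ and $\nu\Delta \zeta_{n+1,k}(\hu)$ (plus analogous contributions from the pressure).

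Next, I would apply $\delta^{k+1}$ to this recursion and unroll it exactly as in Eq.~\eqref{eq:pressure-est15}, yielding an explicit expansion of $\nabla\delta^{k+1}e_p^{n+1}$ as a geometric sum in $\delta^{k+1}(\hh_e^i+\rho^i)$ plus an initial contribution weighted by $((k-1)/k)^{n-1}$. Testing with $q=\delta^{k+1}e_p^{n+1}$, applying Young, using $\sum_{i\geq 0}((k-1)/k)^i\leq k$, and bounding the Stokes pressure term by Lemma~\ref{lem:stokes-technical}, I obtain after summation in $n$
\begin{align*}
\sum_{n=1}^{m-1}\norm{\nabla\delta^{k+1}e_p^{n+1}}^2 \leq \nu^2\tfrac{\nicefrac12+\epsilon}{1-\epsilon}\sum_{n=1}^{m-1}\norm{\Delta\delta^{k+1}\be_\hu^{n+1}}^2 + C\sum_{n=1}^{m-1}\norm{\nabla\delta^{k+1}\be_\hu^{n+1}}^2 + C\sum_{i}\big(\norm{\delta^{k+1}\hh_e^i}^2+\norm{\delta^{k+1}\rho^i}^2\big) + C\tau^3,
\end{align*}
where the final $C\tau^3$ absorbs the bootstrap term via $\norm{\nabla(p^1-p^1_\star)}^2,\norm{\nabla(\bu^1-\hu^1_\star)}^2\leq C_b\tau^4$ from Assumption~\ref{ass:bootstrapping}.

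The remaining work is to bound $\hh_e^n$ and $\rho^n$. Lipschitz continuity of $\hf_2$ together with inserting $\delta^{k+1}\theta^n_\star$ gives $\norm{\hf(t^{n+k},\delta^{k+1}\theta^n)-\hf(t^{n+k},\theta^{n+k}_\star)}^2 \lesssim \norm{\delta^{k+1}e_\theta^n}^2 + \norm{\zeta_{n,k+1}(\theta)}^2$, contributing the $C\sum\norm{\delta^{k+1}e_\theta^n}$ term and, after summing the $\zeta$-consistency, further $O(\tau^3)$. The advective difference is exactly the object bounded by Lemma~\ref{lem:unablu-estimate}, producing the $(1+\norm{\Delta\delta^{k+1}\hu^n}^2)(\norm{\nabla\delta^{k+1}\be_\hu^n}^2+\tau^4 C_0^2 C_\hu^2)$ term (plus an additional $\sum\norm{\zeta_{n,k+1}(\nabla\hu)}^2\lesssim\tau^3$). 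The remainder $\rho^n$ is handled by Lemma~\ref{lem:time-derivative-and-extrapolation} directly: $\sum\norm{\Xi_{n+1}(\hu)}^2\lesssim\tau^5$ and $\sum\norm{\zeta_{n+1,k}(\hu)}^2_2\lesssim\tau^3$ (using $\partial_t^2\hu\in L^2((0,T);(H^2_0)^d)$ from Assumption~\ref{ass:solution}). The claim follows by applying Cauchy--Schwarz and Young to $\inner{\nabla\delta^{k+1}e_p^n,\Delta\delta^{k+1}\be_\hu^{n+1}}$ with the precise weight $\nu\sqrt{(\nicefrac12+\epsilon)/(1-\epsilon)}$, exactly as in the closing step of Lemma~\ref{lem:pressure-stability-estimate}.

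The main obstacle is bookkeeping: when unrolling, the consistency remainders appear weighted by geometric factors $((k-1)/k)^{n+1-i}$, so I have to commute the squared norm past these factors via Young so that the sums collapse to the bounds supplied by Lemma~\ref{lem:time-derivative-and-extrapolation} and ultimately to $O(\tau^3)$. A secondary subtlety is that the constants emerging in front of $\norm{\Delta\delta^{k+1}\be_\hu^{n+1}}^2$ must match the sharp $\nu\sqrt{(\nicefrac12+\epsilon)/(1-\epsilon)}$ required by the downstream error estimate, which forces the same careful use of Lemma~\ref{lem:stokes-technical} as in the stability version.
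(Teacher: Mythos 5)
Your proposal is correct and follows essentially the same route as the paper: derive the error analogue of the pressure recursion, unroll it with the geometric factor $\nicefrac{(k-1)}{k}$, bound the Stokes pressure via Lemma~\ref{lem:stokes-technical}, the advective difference via Lemma~\ref{lem:unablu-estimate}, the source differences via Lipschitz continuity of $\hf_2$ and Lemma~\ref{lem:time-derivative-and-extrapolation}, and close with the weighted Cauchy--Schwarz/Young step. The only point you gloss over is the consistency term $\delta^{k}\big((\hu^{n+1}_\star\cdot\nabla)\hu^{n+1}_\star\big)-(\hu^{n+k}_\star\cdot\nabla)\hu^{n+k}_\star=\zeta_{n+1,k}(\hu\cdot\nabla\hu)$, for which the paper explicitly verifies $\partial_t^2(\hu\cdot\nabla\hu)\in L^2\big((0,T);L^2(\Omega)\big)$ via the chain rule and Assumption~\ref{ass:solution} before invoking Lemma~\ref{lem:time-derivative-and-extrapolation}; this check should be added, but it is routine and does not affect the validity of your argument.
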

\begin{proof}
We start with plugging the solution into Eq.~\eqref{eq:gen:pressure}, and applying $\delta^{k}$, to obtain for all $n\geq2$:
\begin{align*}
\inner{
\nabla \delta^{k} p^{n+1}_\star,
\nabla q
}
=
\inner{ \nu(\Delta - \nabla \nabla \cdot) \nabla \delta^{k} \hu^{n+1}_\star
- 
\delta^{k}((\hu^{n+1}_\star \cdot \nabla) \hu^{n+1}_\star)
+ \delta^{k}(\hf(t^{n+1}, \theta^{n+1}_\star))
, \nabla q}
.
\end{align*}
Subtracting this equation from Eq.~\eqref{eq:pressure-before-recursion} multiplied by $k$ and using the definition of $\hh^n$, we find
\begin{align*}
\inner{ \nabla \delta^{k} e_p^{n+1}, \nabla q }
&=
\inner{ \nu(\Delta - \nabla \nabla \cdot) \delta^{k} \be_\hu^{n+1} 
- \he_\hh^n
, \nabla q}
,
\end{align*}
with
\begin{align*}
\he_\hh^n
=
\hf(t^{n+k}, \delta^{k+1} \theta^n)
- \delta^{k}(\hf(t^{n+1}, \theta^{n+1}_\star))
+  \delta^{k}((\hu^{n+1}_\star \cdot \nabla) \hu^{n+1}_\star)
- (\delta^{k+1} \hu^n \cdot \nabla) \delta^{k+1} \hu^n
.
\end{align*}
Thus, with the same arguments as in the derivation of Eq.~\eqref{eq:pressure-est15} in the proof of Lemma~\ref{lem:pressure-stability-estimate}, we obtain 
\begin{align*}
\sum_{n=1}^{m-1}
\norm{\nabla \delta^{k+1}e_p^{n+1}
}^2
&
\leq
\nu^2
\frac{\nicefrac{1}{2}+\epsilon}{1-\epsilon}
\sum_{n=1}^{m-1}
\norm{
\Delta \delta^{k+1} \be_\hu^{n+1}
}^2
+
C
\sum_{n=1}^{m-1}
\norm{
\nabla \delta^{k+1} \be_\hu^{n+1}
}^2
\\&
+
C
\norm{
\nabla 
e_p^1
- \nabla p_s(\nu \be_\hu^{1})}^2
+ 
C
\sum_{i=1}^{m-1}
\norm{
\delta^{k+1}
\he_\hh^i
}^2
.
\end{align*}
We split up $\he_\hh^n = (I) + (II) + (III) + (IV)$ into
\begin{align*}
(I) & :=
\hf_1(t^{n+k})-\delta^k(\hf_1(t^{n+1}))
,\\ (II) & :=
\hf_2(\delta^{k+1} \theta^n)
- \delta^{k}(\hf_2(\theta^{n+1}_\star))
,\\ 
(III) & :=
(\hu^{n+k}_\star \cdot \nabla) \hu^{n+k}_\star
- (\delta^{k+1} \hu^n \cdot \nabla) \delta^{k+1} \hu^n
,\textmd{ and }\\ 
(IV) & := \delta^{k}((\hu^{n+1}_\star \cdot \nabla) \hu^{n+1}_\star) - (\hu^{n+k}_\star \cdot \nabla) \hu^{n+k}_\star.
\end{align*}

We obtain by Lemma~\ref{lem:time-derivative-and-extrapolation} $\norm{(I)} \leq \norm{\zeta_{n+1,k}(\hf_1)}$, and
\begin{align*}
\norm{
(II)
}
&\leq 
\norm{ \hf_2(\delta^{k+1} \theta^n) - \hf_2(\delta^{k+1} \theta^n_\star) }  + \norm{  \hf_2(\delta^{k+1} \theta^n_\star)  -\hf_2(\theta^{n+k}_\star)  } 
+ \norm{ \hf_2(\theta^{n+k}_\star)  - \delta^{k}(\hf_2(\theta^{n+1}_\star)) }
\\
&\leq
\alpha \norm{\delta^{k+1}e_\theta^n}
+
\alpha \norm{\zeta_{n,k+1}(\theta)}
+
\norm{\zeta_{n+1,k}(\hf_2\circ \theta)}
.
\end{align*}
By Lemma~\ref{lem:unablu-estimate} we have
\begin{align*}
    \norm{(III)
}^2
     & \leq
    C
    (1+\norm{\Delta \delta^{k+1} {\hu}^{n} }^2)
    (
    \norm{ \nabla \delta^{k+1} \be^n_\hu }^2
    + \tau^4 C_0^2 C_\hu^2
    )
        +
    C
    \norm{
        \zeta_{n,k+1}( \nabla \hu )
    }^2
    ,
\end{align*}
and $(IV) =  \zeta_{n+1,k}(\hu \cdot \nabla \hu)$.
To apply Lemma~\ref{lem:time-derivative-and-extrapolation}, it suffices that
$\partial_t^2 (\hu \cdot \nabla \hu )\in L^2(\Omega)$.
The chain rule gives
\begin{align*}
    \partial_t^2 (\hu \cdot \nabla \hu )
    &=
    \partial_t
    (
    \dot\hu \cdot \nabla \hu
    +
    \hu \cdot \nabla \dot\hu
    )
    =
    \ddot\hu \cdot \nabla \hu
    +
    \hu \cdot \nabla \ddot\hu
    +
    2
    \dot\hu \cdot \nabla \dot\hu
    ,
    \intertext{and since}
    \norm{
        \partial_t^2 (\hu \cdot \nabla \hu )
    }
     & \leq
    \norm{\ddot\hu \cdot \nabla \hu}
    +
    \norm{
        \hu \cdot \nabla \ddot\hu
    }
    +
    2
    \norm{
        \dot\hu \cdot \nabla \dot\hu
    }
    \\&\lesssim
    \norm{\ddot\hu}_{1} \norm{\hu}_2
    +
    \norm{ \hu }_2 \norm{\ddot\hu }_1
    +
    \norm{
        \dot\hu
    }_1
    \norm{
        \dot\hu
    }_2
\end{align*}
is bounded by our assumptions on $\hu$, we conclude $\zeta_{n+1,k}(\hu \cdot \nabla \hu) \in L^2(\Omega)$.
Thus, 
\begin{align}
\sum_{n=1}^{m-1}
\norm{\nabla \delta^{k+1}e_p^{n+1}
}^2
&
\leq
\nu^2
\frac{\nicefrac{1}{2}+\epsilon}{1-\epsilon}
\sum_{n=1}^{m-1}
\norm{
\Delta \delta^{k+1} \be_\hu^{n+1}
}^2
+ C \sum_{n=1}^{m-1} \norm{\delta^{k+1}e_\theta^n}^2
+ C \tau^3
\nonumber
\\&
\qquad
+
C 
\sum_{n=1}^{m-1}
(1+\norm{\Delta \delta^{k+1} {\hu}^{n} }^2) ( \norm{ \nabla \delta^{k+1} \be^n_\hu }^2 + \tau^4 C_0^2 C_\hu^2
)
,
\label{lem:pressure-estimate}
\end{align}
where we used Lemma~\ref{lem:time-derivative-and-extrapolation} to bound the terms with $\zeta$ by $C \tau^3$.
The conclusion follows from
\begin{align*}
\inner{
\nabla \delta^{k+1} e_p^n
,
\Delta \delta^{k+1} \be_\hu^{n+1}
}
\leq
\frac{1}{2\nu}
\left(
\frac{1-\epsilon}{\nicefrac{1}{2}+\epsilon}
\right)^{\frac{1}{2}}
\norm{
\nabla \delta^{k+1} e_p^n
}^2
+
\frac{\nu}{2}
\left(
\frac{\nicefrac{1}{2}+\epsilon}{1-\epsilon}
\right)^{\frac{1}{2}}
\norm{
\Delta \delta^{k+1} \be_\hu^{n+1}
}^2
.
\end{align*}
\end{proof}
We will now show the following error estimate:
\begin{lemma}[Error estimate]
    \label{lem:error-boussinesq}
    Given Assumptions~\ref{ass:domain}, \ref{ass:constants}, \ref{ass:solution}, and \ref{ass:bootstrapping}, we further assume the bounds $c_\xi, c_\eta, C_0 > 0$, such that,
    \[
        \xi^{n} \geq c_\xi,\, \eta^{n} \geq c_\eta,
        \textmd{ and }
        \abs{1 - \eta^n} \leq C_0 \tau^2 
        ,
        \textmd{ for all }
        0 \leq n \leq m-1
        ,
    \]
    where $0\leq m \leq N$.
    Then, we have
    \[
        \norm{\nabla \be^m_\hu}^2
        + \norm{\nabla {e}_\theta^m}^2
        + \tau \sum_{n=1}^{m-1} \norm{\Delta \be_\hu^{n+1}}^2
        + \tau \sum_{n=1}^{m-1} \norm{\Delta e_\theta^{n+1}}^2
        \leq C_e^2 \tau^4
        ,
    \]
    with $C_e^2 = \tilde{C}_e^2 (1 + C_0^2)$, where $\tilde{C}_e$ is independent of $C_0$ and $\tau$.
\end{lemma}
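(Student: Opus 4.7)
My plan is to combine an error analysis for the temperature equation and for the velocity equation, in the spirit of the stability proofs in Lemmas~\ref{lem:stability-heat} and~\ref{lem:stability-stokes}, and then couple both estimates through a single application of the Gr\"onwall Lemma~\ref{lem:Gronwall-reformulated}. Concretely, I would test the error equation~\eqref{eq:error-T} with $(-\Delta)\delta^{l+1} e_\theta^{n+1}$ and the error equation~\eqref{eq:error-u} with $(-\Delta)\delta^{k+1}\be_\hu^{n+1}$, then on each left-hand side use Lemma~\ref{lem:abb-equation-2} to produce telescoping terms of the form $a_l(\norm{\nabla e_\theta^{n+1}}^2-\norm{\nabla e_\theta^n}^2)$ and $a_k(\norm{\nabla\be_\hu^{n+1}}^2-\norm{\nabla\be_\hu^n}^2)$, together with positive $\norm{\Delta \delta^{l+1}e_\theta^{n+1}}^2$ and $\norm{\Delta\delta^{k+1}\be_\hu^{n+1}}^2$ contributions that will later absorb the right-hand side.

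The right-hand side splits into four groups. First, the consistency remainders $\Xi_{n+1}(\theta)$, $S_2^{n+1}$, and the $\zeta$-type terms produced by $S_1^{n+1}$ via Lemma~\ref{lem:unablu-estimate} are handled by Lemma~\ref{lem:time-derivative-and-extrapolation}, contributing at most $C\tau^3$ after summation. Second, the Lipschitz coupling $S_\star^{n+1}$ is split as $\hf_2(\theta_\star^{n+k})-\hf_2(\delta^{k+1}\theta_\star^n)+\hf_2(\delta^{k+1}\theta_\star^n)-\hf_2(\delta^{k+1}\theta^n)$, which yields a $\zeta$-term plus $\alpha\norm{\delta^{k+1} e_\theta^n}$. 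Third, the coupling $R_\star^{n+1}$ from~\eqref{eq:error-Rstar} is treated term by term with Lemma~\ref{lem:estimate-advection-term}: the crucial contributions $\delta^{l+1}\he_\hu^n\cdot\nabla\delta^{l+1}\theta^n$ and $\delta^{l+1}\hu_\star^n\cdot\nabla\delta^{l+1}e_\theta^n$ are bounded using $\norm{\nabla\theta^n}\leq C_\theta$ (Lemma~\ref{lem:stability-heat}) and $\norm{\hu^n}\leq M$ (Lemma~\ref{lem:weak-stability}), after converting $\he_\hu$ errors to $\be_\hu$ errors by Corollary~\ref{cor:e-ebar} (this produces the $\tau^4 C_0^2$ constants). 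Fourth, the nonlinear velocity term $S_1^{n+1}$ is estimated directly by Lemma~\ref{lem:unablu-estimate}, yielding $C(1+\norm{\Delta\delta^{k+1}\hu^n}^2)(\norm{\nabla\delta^{k+1}\be_\hu^n}^2+\tau^4 C_0^2 C_\hu^2)$, where $\tau\sum_n \norm{\Delta\delta^{k+1}\hu^n}^2$ is bounded by the remark following Lemma~\ref{lem:stability-stokes}.

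The delicate step, and the one that dictates the condition $k\geq 3$, is the pressure contribution: testing~\eqref{eq:error-u} with $(-\Delta)\delta^{k+1}\be_\hu^{n+1}$ produces $\langle\nabla\delta^{k+1}e_p^n,\Delta\delta^{k+1}\be_\hu^{n+1}\rangle$, and Lemma~\ref{lem:error-estimate-pressure} controls its telescoped sum by $\nu\sqrt{(1/2+\epsilon)/(1-\epsilon)}\sum\norm{\Delta\delta^{k+1}\be_\hu^{n+1}}^2$ plus acceptable remainders. Combining the time-derivative coefficient $\hat a_k$ from~\eqref{eq:factorizationextr2} with this pressure coefficient yields a positive absorbing constant $\hat\epsilon_S=\hat a_k-\sqrt{1/2+\epsilon}/\sqrt{1-\epsilon}-O(\epsilon)$; as in Lemma~\ref{lem:stability-stokes}, $\hat a_k\approx 0.712$ for $k=3$ beats $1/\sqrt 2\approx 0.707$, so $\hat\epsilon_S>0$ for $\epsilon$ sufficiently small. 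This is where the hypothesis $k\geq 3$ is essential; for smaller $k$ the pressure term cannot be absorbed.

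Once all terms are collected, I sum from $n=1$ to $m-1$ and invoke Assumption~\ref{ass:bootstrapping} to bound the initial contributions $\norm{\nabla\be_\hu^1}^2$, $\norm{\Delta\be_\hu^1}^2$, $\norm{\nabla e_\theta^1}^2$, $\norm{\Delta e_\theta^1}^2$ by $C\tau^4$ and $C\tau^3$, respectively. This places the inequality into the format required by Lemma~\ref{lem:Gronwall-reformulated}, with $(u^n,v^n)=(\be_\hu^n,e_\theta^n)$ in the gradient norm; the weights $a_n,b_n,c_n,d_n$ are made up of the bounded quantities $\norm{\nabla\hu^n}^2$, $1+\norm{\Delta\delta^{k+1}\hu^n}^2$, and $\norm{\nabla\delta^{l+1}\theta^n}^2$, whose $\tau$-weighted sums are uniformly bounded by the stability Lemmas~\ref{lem:weak-stability}, \ref{lem:stability-heat}, and \ref{lem:stability-stokes}. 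The Gr\"onwall conclusion then produces the claimed bound $C_e^2\tau^4$, with the $C_0$-dependence entering only through the $\tau^4 C_0^2$ remainders, giving $C_e^2=\tilde C_e^2(1+C_0^2)$. The main technical obstacle is tracking the pressure coefficient tightly enough to absorb it into $\hat a_k$ while keeping all $\delta^{k+1}\be_\hu^n$ terms under control through the Gr\"onwall weights; once this balancing is done, the rest follows by a standard, albeit bookkeeping-heavy, combination of the lemmas collected in this section.
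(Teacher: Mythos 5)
Your proposal follows essentially the same route as the paper's proof: test \eqref{eq:error-T} with $(-\Delta)\delta^{l+1}e_\theta^{n+1}$ and \eqref{eq:error-u} with $(-\Delta)\delta^{k+1}\be_\hu^{n+1}$, telescope via Lemma~\ref{lem:abb-equation-2}, estimate $R_\star$, $S_\star$, $S_1$, $S_2$ and the pressure exactly with the cited lemmas (including the absorption constant $\hat\epsilon_S=\hat a_k-3\epsilon-\sqrt{(\nicefrac{1}{2}+\epsilon)/(1-\epsilon)}$ that forces $k\geq3$), and close with a single application of Gr\"onwall using the stability bounds as summable weights. The only cosmetic difference is that you invoke Lemma~\ref{lem:estimate-advection-term} for $R_\star$ where the paper redoes the Ladyzhenskaya-type interpolation inline, which is equivalent.
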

\begin{proof}
    We divide the proof into two steps:
    In Step 1, we will derive an estimate in which the heat equation is nearly in the correct shape for applying the Gr\"onwall Lemma~\ref{lem:Gronwall-reformulated}. However, the direct use is prohibited by the presence of a velocity term.
    In Step 2, we will repeat the same for the Stokes equations. Here, temperature terms will prevent the application of the Gr\"onwall Lemma.
    Finally, in Step 3, we will add both terms, apply Gr\"onwall, and get the desired bounds.

        {\bf Step 1: The heat equation:}
    We test the error equation~\eqref{eq:error-T} of the heat equation with $(-\Delta) \delta^{l+1} e_\theta^{n+1}$ and obtain
    \begin{align}
        &
        \inner{\nabla \tD^l e_\theta^{n+1}\!\!, \nabla \delta^{l+1} e_\theta^{n+1}}
        \! -  \!
        2 \tau \inner{R^{n+1}_\star\!\!, \Delta \delta^{l+1} e_\theta^{n+1}}
        \! +  \!
        2 \tau \kappa \inner{\Delta \delta^{l} e_\theta^{n+1}\!\!, \Delta \delta^{l+1} e_\theta^{n+1}}
        \nonumber
        \\&\quad
        =
        \inner{\Xi_{n+1}(\theta), \Delta \delta^{l+1} e_\theta^{n+1}} .
        \label{eq:iib-tested-error-heat}
    \end{align}
    To estimate the second term, we first apply Young's inequality:
    \begin{align*}
        \abs{\inner{R^{n+1}_\star, (-\Delta) \delta^{l+1} e_\theta^{n+1}}}
        \leq
        \kappa \epsilon \norm{ \Delta \delta^{l+1} e_\theta^{n+1} }^2
        +
        \frac{1}{4\kappa \epsilon }
        \norm{R^{n+1}_\star}^2.
    \end{align*}
    Next, applying the triangle inequality to $R^{n+1}_\star$ from Eq.~\eqref{eq:error-Rstar} gives
    \begin{align*}
        \norm{ R^{n+1}_\star }
        &
        \leq     
        \norm{\delta^{l+1} \he^n_\hu \cdot \nabla \delta^{l+1} \theta^n}
        + \norm{\delta^{l+1} \hu^n_\star \cdot \nabla \delta^{l+1} e_\theta^n}
        + \norm{\zeta_{n,l+1}(\hu)\cdot \nabla \theta^{n+l}_\star}
        + \norm{\delta^{l+1} \hu^n_\star \cdot \zeta_{n,l+1}(\nabla \theta)}
        \\
        &
        \lesssim 
        \norm{\nabla \delta^{l+1} \he^n_\hu}
        \norm{ \Delta \delta^{l+1} \theta^n}
        +
        \norm{\nabla \delta^{l+1} \hu^n_\star}
        \norm{ \nabla \delta^{l+1} e_\theta^n}^{1/2}
        \norm{ \Delta \delta^{l+1} e_\theta^n}^{1/2}
        \\ & \quad
        + \norm{\zeta_{n,l+1}(\nabla \hu)} \norm{ \Delta \theta^{n+l}_\star}
        + \norm{\delta^{l+1} \hu^n_\star}_2 \norm{\zeta_{n,l+1}(\nabla \theta)}
        ,
    \end{align*}
    where we used Cauchy-Schwarz in combination with $H^1(\Omega)\hookrightarrow L^4(\Omega) $, Lemma~\ref{lem:vectorial-ladyzhenskaya-inequality} and $H^2(\Omega)\hookrightarrow C(\Omega)$ in our estimates.
    Assumption~\ref{ass:solution} guarantees $\norm{\nabla \delta^{l+1} \hu^n_\star}, \norm{\Delta \theta^{n+l}_\star}, \norm{\delta^{l+1} \hu^n_\star}_2 \lesssim 1$ and thus
    \begin{align*}
        \norm{ R^{n+1}_\star }^2
        &
        \lesssim 
        \norm{\nabla \delta^{l+1} \he^n_\hu}^2
        \norm{ \Delta \delta^{l+1} \theta^n}^2
        +
        \norm{ \nabla \delta^{l+1} e_\theta^n}
        \norm{ \Delta \delta^{l+1} e_\theta^n}
                + \norm{\zeta_{n,l+1}(\nabla \hu)}^2 + \norm{\zeta_{n,l+1}(\nabla \theta)}^2.
        \intertext{
            Setting $k=l+1$ in Corollary~\ref{cor:e-ebar} and multiplying \eqref{eq:e-bar} with $\norm{\Delta\delta^{l+1}\theta^n}^2$ yield in combination with Young's inequality the bound
        }
        \norm{ R^{n+1}_\star }^2
        &
        \leq
        C
        \norm{\nabla \delta^{l+1} \be^n_\hu}^2
        \norm{ \Delta \delta^{l+1} \theta^n}^2
        +
        C
        C_0^2
        C_\hu^2
        \tau^4
        \norm{\Delta \delta^{l+1}\theta^n}^2
        +
        \frac{C}{\kappa^2 \epsilon^2}
        \norm{ \nabla \delta^{l+1} e_\theta^n}^2
        \\&\quad
        + 4 \kappa^2 \epsilon^2 \norm{ \Delta \delta^{l+1} e_\theta^n}^2
        + C (\norm{\zeta_{n,l+1}(\nabla \hu)}^2 + \norm{\zeta_{n,l+1}(\nabla \theta)}^2)
        .
    \end{align*}
    For the fourth term in \eqref{eq:iib-tested-error-heat}, we have
    \begin{align*}
        \abs{\inner{\Xi_{n+1}(\theta),  \Delta \delta^{l+1} e_\theta^{n+1}}}
        \leq
        2 \tau \epsilon \kappa \norm{ \Delta \delta^{l+1} e_\theta^{n+1}}^2
        + \frac{1}{8\tau \epsilon\kappa} \norm{\Xi_{n+1}(\theta)}^2
        ,
    \end{align*}
    where $\epsilon > 0$ arbitrary will be chosen later.
    Applying Eq.~\eqref{eq:factorizationD} and \eqref{eq:factorizationextr2} of Lemma~\ref{eq:factorizationextr2} to Eq.~\eqref{eq:iib-tested-error-heat}, and collecting all the previous estimates, yield
    \begin{align*}
         &
        a_l (
        \norm{\nabla e^{n+1}_\theta}^2
        -\norm{\nabla e^{n}_\theta}^2
        )
        +
        \norm{ b_l \nabla e^{n+1}_\theta - c_l \nabla e^{n}_\theta }^2
        -
        \norm{ b_l \nabla e^{n}_\theta - c_l \nabla e^{n-1}_\theta }^2
        + 2 \tau \kappa (\hat{a}_l - \epsilon) \norm{\Delta \delta^{l+1} e^{n+1}_\theta }^2
        \\
         &
        - 2 \tau \kappa \epsilon \norm{\Delta \delta^{l+1} e^{n}_\theta }^2
        + 2 \tau \kappa
        (
        (\hat{d}_l + \hat{f}_l) \norm{ \Delta e^{n+1}_\theta }^2
        -
        \hat{d}_l \norm{ \Delta e^{n}_\theta }^2
        )
        \leq
        \frac{2 \tau 2C}{\kappa \epsilon }
        \norm{\nabla \delta^{l+1} \be^n_\hu}^2
        \norm{ \Delta \delta^{l+1} \theta^n}^2
        \\ &
        \!+\!\frac{2\tau C}{\kappa^3 \epsilon^3} \norm{ \nabla \delta^{l+1} e_\theta^n}^2
        \!\!+\! \frac{2\tau C}{\kappa \epsilon }
        \norm{\zeta_{n,l+1}(\nabla \hu)}^2
        \!\!+\!\frac{\tau}{2\kappa \epsilon }
        \norm{\zeta_{n,l+1}(\nabla \theta)}^2
        \!\!+\! \frac{C}{\tau} \norm{\Xi_{n+1}(\theta)}^2
        \!\!+\! C C_0^2 C_\hu^2 \tau^5 \norm{\Delta \delta^{l+1}\theta^n}^2
        .
    \end{align*}
    Summing up from $n = 1$ to $m-1$,  using Lemma~\ref{lem:time-derivative-and-extrapolation},
    omitting some positive terms,
    using
    $a_l\norm{\nabla e^{1}_\theta}^2 \leq C \tau^4$,
    $\norm{ b_l \nabla e^{1}_\theta}^2 \leq C \tau^4$,
    and
    $\hat{d}_l \norm{ \Delta e^{1}_\theta }^2 \leq C \tau^3$,
    as well as the relation
    \begin{align*}
        C C_0^2 C_\hu^2 \tau^5 \sum_{n=1}^{m-1} \norm{\Delta \delta^{l+1}\theta^n}^2 \leq C C_0^2 C_\hu^2 C_\theta^2 \tau^4 \leq C C_0^2 \tau^4
    \end{align*}
    by Lemma~\ref{lem:stability-heat}, yields
    \begin{align}
         &
        a
        \norm{\nabla e^{m}_\theta}^2
                + 2 \tau \kappa
        \hat{\epsilon}_\theta
        \sum_{n=1}^{m-1}
        \norm{\Delta \delta^{l+1} e^{n+1}_\theta }^2
        + 2 \tau \kappa
        \hat{f}
        \sum_{n=1}^{m-1}
        \norm{ \Delta e^{n+1}_\theta }^2
                                                                                                \nonumber
        \\ & \leq
        \frac{2\tau C}{\kappa^3 \epsilon^3} \sum_{n=1}^{m-1} \norm{ \nabla  e_\theta^n}^2
        +
        \frac{2 \tau C}{\kappa \epsilon } \sum_{n=1}^{m-1} \norm{ \Delta \delta^{l+1} \theta^n}^2 \norm{\nabla \delta^{l+1} \be^n_\hu}^2
        +
        C(1+C_0^2) \tau^4
        ,
        \label{eq:error-result-T}
    \end{align}
    where we defined $ \hat{\epsilon}_\theta := \hat{a} - 2\epsilon $, which is positive for $\epsilon$ small enough, and estimated 
    $ \norm{ \nabla \delta^{l+1} e_\theta^n}^2\allowbreak\lesssim~\norm{ \nabla e_\theta^n}^2 \norm{ \nabla e_\theta^{n-1}}^2$.
    
        {\bf Step 2: The Stokes Equation:}
    We test Eq.~\eqref{eq:error-u} with $ (-\Delta)\delta^{k+1} \be^{n+1}_\hu$, and obtain
    \begin{align}
         &
        \inner{\nabla \tD^k \be^{n+1}_\hu, \nabla\delta^{k+1} \be^{n+1}_\hu} + 2 \tau \nu \inner{\Delta \delta^k \be^{n+1}_\hu, \Delta\delta^{k+1}  \be^{n+1}_\hu}
        + 2 \tau \inner{\nabla \delta^{k+1} e^n_p, (-\Delta)\delta^{k+1} \be^{n+1}_\hu}
        \label{eq:iib-tested-error-stokes}
        \\&\quad
        + 2\tau\inner{S^{n+1}_\star+S^{n+1}_1+S^{n+1}_2,(-\Delta) \delta^{k+1} \be^{n+1}_\hu}
        = 0
        .
        \nonumber
    \end{align}
    Firstly, we bound the coupling term $S^{n+1}_\star$ from Eq.~\eqref{eq:error-Sstar} and get
    \begin{align}
        \norm{
            S^{n+1}_\star
        }^2
        &
        =    
        \norm{
            \hf_2(\theta^{n+k}_\star) - \hf_2(\delta^{k+1} \theta^{n})
        }^2
        \leq
        2
        \norm{
            \hf_2(\theta^{n+k}_\star)
            -\hf_2(\delta^{k+1}\theta^n_\star)
        }^2
        +
        2
        \norm{
            \hf_2(\delta^{k+1}\theta^n_\star)
            - \hf_2(\delta^{k+1} \theta^{n})
        }^2.
        \nonumber
                \intertext{Using that $\hf_2$ is Lipschitz further yields}
        &
        \norm{
            S^{n+1}_\star
        }^2
        \leq 
        2
        \alpha^2
        \norm{
            \theta^{n+k}_\star
            -\delta^{k+1}\theta^n_\star
        }^2
        +
        2
        \alpha^2
        \norm{
            \delta^{k+1}e_\theta^n
        }^2
        \leq
        2
        \alpha^2
        \norm{
            \zeta_{n,k+1}(\theta)
        }^2
        +
        2
        \alpha^2
        \norm{
            \delta^{k+1}e_\theta^n
        }^2
        ,
        \label{eq:iib-Sstar}
    \end{align}
    and thus
    \begin{align}
        \abs{ \inner{S^{n+1}_\star, (-\Delta) \delta^{k+1} \be^{n+1}_\hu } }
        \leq
        \frac{
            \alpha^2
        }{
            2 \epsilon \nu
        }
        \norm{
            \zeta_{n,k+1}(\theta)
        }^2
        +
        \frac{
            \alpha^2
        }{
            2 \epsilon \nu
        }
        \norm{
            \delta^{k+1}e_\theta^n
        }^2
        +
        \epsilon \nu \norm{\Delta\delta^{k+1} \be_\hu^{n+1}}^2
        .
        \label{eq:error-res-Star}
    \end{align}
    Secondly, the term $S^{n+1}_1$ from Eq.~\eqref{eq:error-S1} can be bounded by
    \begin{align}
        | \langle S^{n+1}_1,& (-\Delta)\delta^{k+1} \be^{n+1}_\hu\rangle |
        \leq
        \nu \epsilon
        \norm{\Delta \delta^{k+1} \be^{n+1}_\hu}^2
        +
        C \norm{ S^{n+1}_1}^2
        .
        \nonumber
        \intertext{
            for all $\epsilon > 0$, with a now $\epsilon$-dependent $C$.
            Now, applying Lemma~\ref{lem:unablu-estimate} on the term $\norm{S^{n+1}_2}^2$ yields
        }
        &
        \leq
        \nu \epsilon \norm{\Delta \delta^{k+1} \be_\hu^{n+1} }^2
        +
        C
        \norm{
            \zeta_{n,k+1}( \nabla \hu )
        }^2
        +
        C
        (1+\norm{\Delta \delta^{k+1} {\hu}^{n} }^2)
        (
        \norm{ \nabla \delta^{k+1} \be^n_\hu }^2
        + \tau^4 C_0^2 C_\hu^2
        ).
                \label{eq:tpf-error-s1-s2}
    \end{align}

    Thirdly, the source term $S^{n+1}_2$ from Eq.~\eqref{eq:error-S2} gives directly from Young's inequality
    \begin{align}
        |
        \langle
        S_2^{n+1},
        &
        (-\Delta)\delta^{k+1}\be^{n+1}_\hu
        \rangle
        |
        \leq 
        \nu \epsilon
        \norm{\Delta\delta^{k+1}\be^{n+1}_\hu}^2
        +
        C
        \norm{S_2^{n+1}}^2
        \\
        & \leq 
        \nu \epsilon
        \norm{\Delta\delta^{k+1}\be^{n+1}_\hu}^2
        + C \norm{\zeta_{n, k+1}(\nabla p)}^2
        + \frac{C}{\tau^2} \norm{\Xi^{n+1}(\hu)}^2
        + C \norm{\zeta_{n+1,k}(\Delta \hu)}^2
        .
        \label{eq:tpf-error-s3}
    \end{align}

    Applying Lemma~\ref{lem:abb-equation-2} to Eq.~\eqref{eq:iib-tested-error-stokes}, collecting all the previous estimates in Eqs.~\eqref{eq:error-res-Star}, \eqref{eq:tpf-error-s1-s2}, and \eqref{eq:tpf-error-s3}, omitting some positive terms, simplifying constants, summing from $n=1$ to $m-1$, and finally applying Lemma~\ref{lem:error-estimate-pressure} yield
    \begin{align}
         &
        a_k \norm{\nabla \be_\hu^{m}}^2
                        +  2 \tau \nu   \sum_{n=1}^{m-1} \hat{f}_k \norm{  \Delta \be_\hu^{n+1}  }^2
                +2 \tau \nu
        \hat{\epsilon}_S
        \sum_{n=1}^{m-1}
        \norm{ \Delta \delta^{k+1} \be_\hu^{n+1} }^2
        \nonumber
        \\ &
        \leq
        2\tau C  \sum_{n=1}^{m-1} (1 + \norm{\Delta\delta^{k+1}{\hu}^n}^2 ) \norm{\nabla\delta^{k+1}\be^n_\hu}^2
        +
        2\tau C \sum_{n=1}^{m-1} \norm{\nabla e_\theta^n }^2
        +
        2 C (1+C_0^2) \tau^4
        ,\label{eq:error-result-u}
    \end{align}
    where we used 
    estimating $\norm{\delta^{k+1} e^n_\theta} \leq C \norm{ \nabla\delta^{k+1} e^n_\theta} \leq C {(\norm{\nabla e^n_\theta} + \norm{\nabla e^{n-1}_\theta})}$ by Poincar\'e, using Lemma~\ref{lem:time-derivative-and-extrapolation},
    as well as
    $a_k \norm{\nabla \be_\hu^{1}}^2 \leq C \tau^4$,
    $\norm{b_k \nabla \be_\hu^{1}}^2 \leq C \tau^4$,
    $ 2 \tau \nu \hat{d}_k \norm{  \Delta \be_\hu^{1}  }^2 \leq C \tau^4$,
    and defined the parameter
    \begin{align*}
    \hat\epsilon_S :=
    \hat{a}_k - 3 \epsilon - \sqrt{\frac{\nicefrac{1}{2} + \epsilon}{1 -\epsilon}}
    .
    \end{align*}
    Since for $k \geq 3$ we have $\hat{a}_k \geq 0.712 \geq \nicefrac{1}{\sqrt{2}}$, we can find $\epsilon > 0$, such that $\epsilon_S > 0$.

    {\bf Step 3: The fully-coupled System:}
    Adding up the estimates of the two previous systems
    in Eqs.~\eqref{eq:error-result-T} and \eqref{eq:error-result-u} yield
    \begin{align*}
         &
        a_{\min\{k,l\}} 
        (\norm{\nabla \be_\hu^{m}}^2 + \norm{\nabla e^{m}_\theta}^2)
        + 2 \tau \kappa \hat{\epsilon}_\theta\sum_{n=1}^{m-1} \norm{\delta^{k+1} \Delta e^{n+1}_\theta }^2
        +2 \tau \nu  \hat{\epsilon}_S \sum_{n=1}^{m-1} \norm{ \Delta \delta^{k+1} \be_\hu^{n+1} }^2
        \\ &
        +  2 \tau \nu  \hat{f}_k  \sum_{n=1}^{m-1} \norm{  \Delta \be_\hu^{n+1}  }^2
        + 2 \tau \kappa \hat{f}_l \sum_{n=1}^{m-1} \norm{ \Delta e^{n+1}_\theta }^2
        \leq
        \tau C \sum_{n=1}^{m-1} \norm{\nabla e_\theta^n }^2
        + C (1+C_0^2) \tau^4
        \\ &
        +
        \tau C  \sum_{n=1}^{m-1} (1 + \norm{\Delta\delta^{k+1}{\hu}^n}^2 ) \norm{\nabla\delta^{k+1}\be^n_\hu}^2
        +
        \tau C  \sum_{n=1}^{m-1}  \norm{ \Delta \delta^{l+1} \theta^n}^2 \norm{\nabla\delta^{l+1}\be^n_\hu}^2
        .
    \end{align*}
    By dividing by $a_{\min\{k,l\}}$ and using Lemma~\ref{lem:Gronwall-reformulated}, we obtain the claim
    with constants
    \begin{align*}
        \tau \sum_{n=1}^{m-1} c_n
         & =
        \tau
        \frac{C}{a_{\min\{k,l\}}}
        \sum_{n=1}^{m-1} \norm{\Delta \delta^{l+1}\theta^{n}}^2
        \leq
        C
        C_\theta^2
        =:  M_c
        ,
    \end{align*}
    \begin{align*}
        \tau \sum_{n=1}^{m-1} d_n
         & =
        \tau
        \frac{C}{a_{\min\{k,l\}}}
        \sum_{n=1}^{m-1} ( 1 + \norm{\Delta \delta^{k+1}{\hu}^{n}}^2)
        \leq
        C
        (T +
        C_\hu^2
        )
        =:  M_d
        ,
    \end{align*}
    and $M_a = 0$, $M_b = 2 C T/a_{\min\{k,l\}}$, which completes the proof.
\end{proof}

Next, we will prove first-order convergence for the auxiliary variable $r$, under the same conditions as in Lemma~\ref{lem:error-boussinesq}.
\begin{lemma}[Error for the auxiliary variable]
    \label{lem:error-aux}
    Given Assumptions~\ref{ass:domain}, \ref{ass:constants}, \ref{ass:solution}, and \ref{ass:bootstrapping}, and in addition the bounds $c_\eta, c_\xi, C_0 > 0$, such that
        $\xi^{n} \geq c_\xi,\, \eta^{n} \geq c_\eta$,
        and 
        $\abs{1 - \eta^n} \leq \tau^2 C_0$,
        for all
        $0 \leq n \leq m-1$ holds,
    where $0\leq m \leq N$, we have
                                                    \begin{align*}
        \abs{
            e^m_r
        }
        \leq
        \tau \, C_r (1 + \tau \sqrt{1 + C_0^2})
        ,
    \end{align*}
    where the constant $C_r > 0 $ depends on $c_\xi$, and $c_\eta$, but is independent of $\tau$ and $C_0$.
\end{lemma}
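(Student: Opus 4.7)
The approach is to set up a one-step error recursion for $e_r^{n+1}$ by comparing the discrete exponential integrator~\eqref{eq:scheme-r} with the continuous formula~\eqref{eq:r-time-integration}, and then apply a discrete Gronwall argument. Let $H_d^{n+1} := \nicefrac{d\mcE}{dt}(\theta^{n+1}, \bu^{n+1}) / (\mcE(\theta^{n+1}, \bu^{n+1}) + \bar{C})$ denote the discrete integrand and $H_c(s) := \nicefrac{d\mcE}{dt}(\theta(s), \hu(s)) / (\mcE(\theta(s), \hu(s)) + \bar{C})$ its continuous analogue. Writing $r^n = r^n_\star + e_r^n$ and subtracting the two explicit recursions yields
\[
e_r^{n+1} = e_r^n \exp(\tau H_d^{n+1}) + r^n_\star \Bigl[\exp(\tau H_d^{n+1}) - \exp\Bigl(\int_{t^n}^{t^{n+1}} H_c(s)\,\dif s\Bigr)\Bigr].
\]
The weak-stability Lemma~\ref{lem:weak-stability} bounds $|r^n_\star|$ uniformly, and the computations in its proof show that both arguments of the exponentials are bounded above by $\tau/2$. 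Hence the local Lipschitz estimate $|e^a - e^b| \leq e^{\max(a,b)}|a-b|$ reduces the bracket to the difference of its arguments, up to a harmless uniform factor.

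I would then split this argument difference into (i) a quadrature consistency error $\tau H_c(t^{n+1}) - \int_{t^n}^{t^{n+1}} H_c(s)\,\dif s$, and (ii) a field-evaluation error $\tau(H_d^{n+1} - H_c(t^{n+1}))$. For (i), a Taylor expansion of $H_c$ around $t^{n+1}$ gives the standard rectangle-rule remainder of size $O(\tau^2)$ per step; Assumption~\ref{ass:solution} supplies the integrability of $\partial_t H_c$ required to make this rigorous, and summation over $n$ yields a global contribution of order $\tau$. For (ii), I would expand $\mcE$ and $\nicefrac{d\mcE}{dt}$ term-by-term and use the difference-of-squares identity, e.g.
\[
\norm{\nabla \bu^{n+1}}^2 - \norm{\nabla \hu_\star^{n+1}}^2 = \inner{\nabla(\bu^{n+1}+\hu_\star^{n+1}), \nabla \be_\hu^{n+1}},
\]
and the analogous expressions for the $\theta$-terms and the inner-product source terms (invoking the Lipschitz continuity of $\hf_2$ on the latter). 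The stability Lemmas~\ref{lem:stability-heat} and~\ref{lem:stability-stokes} bound the factors $\norm{\nabla \bu}, \norm{\nabla \theta}$, etc., while the error Lemma~\ref{lem:error-boussinesq} supplies $\norm{\nabla \be_\hu^{n+1}} + \norm{\nabla e_\theta^{n+1}} \lesssim C_e \tau^2$ with $C_e \lesssim \sqrt{1+C_0^2}$. The denominators stay bounded below by $\bar{C}>0$, and the hypotheses $\eta^n \geq c_\eta$, $\xi^n \geq c_\xi$ enter only through $C_e$ and, via Corollary~\ref{cor:e-ebar}, whenever one has to swap between $\bu$ and $\hu$. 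This yields $|H_d^{n+1} - H_c(t^{n+1})| \lesssim \tau^2 \sqrt{1+C_0^2}$, so the per-step error from (ii) is of order $\tau^3 \sqrt{1+C_0^2}$.

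Combining (i) and (ii), the local consistency error is bounded by $C\tau^2(1 + \tau\sqrt{1+C_0^2})$. A discrete Gronwall applied to $|e_r^{n+1}| \leq (1 + C\tau)|e_r^n| + C\tau^2(1 + \tau\sqrt{1+C_0^2})$ together with $e_r^0 = 0$ produces
\[
|e_r^m| \leq e^{CT}\,C\,T\,\tau\,(1 + \tau\sqrt{1+C_0^2}),
\]
which is the required estimate, with $C_r$ depending on $T$, on the Lipschitz and boundedness constants of $\hf$ and $g$, on the stability constants $M, C_\theta, C_\hu$, and through these on $c_\xi, c_\eta$. The main obstacle will be the careful term-by-term Lipschitz comparison in step (ii): each kinetic contribution to $\nicefrac{d\mcE}{dt}$ and each source inner product has to be rewritten so that the difference factor involves only $\nabla \be_\hu^{n+1}$ or $\nabla e_\theta^{n+1}$, and one additionally has to verify that the denominator $\mcE(\theta^{n+1}, \bu^{n+1}) + \bar{C}$ deviates from its continuous counterpart by at most $O(\tau^2\sqrt{1+C_0^2})$, so that forming the difference of the two quotients does not inflate the bound. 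The rest is routine bookkeeping.
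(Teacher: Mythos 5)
Your proposal is correct and follows essentially the same route as the paper: both compare the discrete exponential integrator with the exact formula \eqref{eq:r-time-integration}, apply the Lipschitz estimate $|e^x-e^y|\leq \max(e^x,e^y)|x-y|$ with the $\nicefrac{1}{2}$-bounds on the integrand from the weak-stability proof, and split the resulting difference into a one-point quadrature error of order $\tau$ plus a field-evaluation error controlled by Lemma~\ref{lem:error-boussinesq} via difference-of-squares identities for numerator and denominator. The only cosmetic difference is that the paper telescopes the exponentials globally and bounds the accumulated sums in one shot, whereas you run a one-step recursion and close with discrete Gr\"onwall; both yield the stated bound with the same constant dependencies.
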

\begin{proof}
Subtracting $r^{m+1}_\star$ from $r^{m+1}$ in Eq.~\eqref{eq:weak-stability-explicit-r}, yields for the error
    \begin{align*}
    \abs{
    e^{m+1}_r
    }
    =
    \left|
    \exp\left(
    \tau
    \sum_{j=0}^m
        \frac{\nicefrac{d\mcE}{dt}(\theta^{j+1}, \bu^{j+1})}{\mcE(\theta^{j+1}, \bu^{j+1}) + \bar{C}}
    \right)
    - 
    \exp\left(
    \int_0^{t^{m+1}}
        \frac{\nicefrac{d\mcE}{dt}(\theta, \hu)}{\mcE(\theta, \hu) + \bar{C}}
    \textmd{d}t
    \right)
    \right|
    r^0.
    \end{align*}
    To estimate further, 
    we use the 
    elementary estimate
    $|e^x - e^y| \leq \max(e^x, e^y) |x-y|$
    and bound $\max(e^x, e^y)$ 
    with $e^{T/2}$ by using the  
    estimates in Eq.~\eqref{eq:weak-stability-estimate-f}, and \eqref{eq:weak-stability-estimate-g} also on 
    the terms containing
    $\theta^{j+1}_\star$, and
    $\hu^{j+1}_\star$.
    Further, we add and subtract the term $ \tau \sum_{j=0}^m \frac{\nicefrac{d\mcE}{dt}(\theta^{j+1}_\star, \hu^{j+1}_\star)}{\mcE(\theta^{j+1}_\star, \hu^{j+1}_\star) + \bar{C}} $ on the right-hand-side and get
    \begin{align*}
    \abs{
    e^{m+1}_r
    }
    \leq
    e^{T/2} r^0
    \left(
    \left|
    \tau
    \sum_{j=0}^m
        \frac{\nicefrac{d\mcE}{dt}(\theta^{j+1}, \bu^{j+1})}{\mcE(\theta^{j+1}, \bu^{j+1}) + \bar{C}}
        -
        \tau \sum_{j=0}^m \frac{\nicefrac{d\mcE}{dt}(\theta^{j+1}_\star, \hu^{j+1}_\star)}{\mcE(\theta^{j+1}_\star, \hu^{j+1}_\star) + \bar{C}}
    \right|
    \right.
    \\
    + 
    \left.
    \left|
        \tau \sum_{j=0}^m \frac{\nicefrac{d\mcE}{dt}(\theta^{j+1}_\star, \hu^{j+1}_\star)}{\mcE(\theta^{j+1}_\star, \hu^{j+1}_\star) + \bar{C}}
    -
    \int_0^{t^{m+1}}
        \frac{\nicefrac{d\mcE}{dt}(\theta, \hu)}{\mcE(\theta, \hu) + \bar{C}}
    \textmd{d}t
    \right|
    \right)
    .
    \end{align*}
    The second term is the error of a one-point quadrature rule. Since $\mathcal{E}$ and $\nicefrac{d\mathcal{E}}{dt}$ are smooth and due to Assumption~\ref{ass:solution} on our solution, the integrand is at least once differentiable in time, and hence bounded by the term $\tau C$ with a solution-dependent $C$, due to the standard a priori error estimates for quadrature formulas.
    Hence, it only remains to estimate the first term.
    We add and subtract the term $ \frac{\nicefrac{d\mcE}{dt}(\theta^{j+1}_\star, \hu^{j+1}_\star) }{\mcE(\theta^{j+1}, \bu^{j+1}) + \bar{C}}$ to obtain
    \begin{align*}
    \abs{
    e^{m+1}_r
    }
    \leq
    \tau
    e^{T/2} r^0
    &
    \left(
    \sum_{j=0}^m
        \left|
        \frac{
        \nicefrac{d\mcE}{dt}(\theta^{j+1}, \bu^{j+1})
        -
        \nicefrac{d\mcE}{dt}(\theta^{j+1}_\star, \hu^{j+1}_\star)
        }{\mcE(\theta^{j+1}, \bu^{j+1}) + \bar{C}}
        \right|
        \right.
        \\
        &
        +
        \left.
        \sum_{j=0}^m 
        \left|
        \frac{d\mcE}{dt}(\theta^{j+1}_\star, \hu^{j+1}_\star)
        \right|
        \left|
        \frac{1}{\mcE(\theta^{j+1}_\star, \hu^{j+1}_\star) + \bar{C}}
        -
        \frac{1}{\mcE(\theta^{j+1}, \hu^{j+1}) + \bar{C}}
        \right|
        +
        C
        \right)
    \end{align*}
    For estimating the first term, we observe for the velocity
    \begin{align*}
        \nu
        \abs{
            \norm{\nabla \bu^{n+1}}^2
            - \norm{\nabla \hu^{n+1}_\star}^2
        }
        &
        \leq 
        \nu
        (
        \norm{\nabla \bu^{n+1}}
        +
        \norm{\nabla \hu^{n+1}_\star}
        )
        \norm{\nabla \be^{n+1}_\hu}
        \leq
        \nu
        C (1 + C_\hu)
        \norm{\nabla \be^{n+1}_\hu}
        \\
        &
        \leq 
        \nu
        C (1 + C_\hu) C_e
        \tau^2
        ,
    \end{align*}
    and similarly for the temperature
    \begin{align*}
        \abs{
            \norm{\nabla \theta^{n+1}}^2
            - \norm{\nabla \theta^{n+1}_\star}^2
        }
        \leq C (1 + C_\theta)
        \norm{\nabla \be^{n+1}_\theta}
        \leq C (1 + C_\theta) C_e
        \tau^2
        .
    \end{align*}
    Further,
    \begin{align*}
        \abs{
            \inner{\hf(\theta^{n+1}), \bu^{n+1}}
            -\inner{\hf(\theta^{n+1}_\star), \bu^{n+1}_\star}
        }
        &
        \leq 
        \abs{
            \inner{\hf(\theta^{n+1}) - \hf(\theta^{n+1}_\star), \bu^{n+1}}
        }
        +
        \abs{
            \inner{\hf(\theta^{n+1}_\star), \be^{n+1}_\hu}
        }
        \\
        &
        \leq 
        \alpha
        \norm{\bu^{n+1}}
        \norm{e^{n+1}_\theta}
        +
        (
        C_{{\bf f}_1}
        +
        \alpha 
        \norm{\theta^{n+1}_\star}
        )
        \norm{\be^{n+1}_\hu}
        \\
        &
        \leq 
        C (C_\hu + 1) C_e \tau^2
        ,
    \end{align*}
    and similarly,
    \begin{align*}
        \alpha^2
        \abs{
            \inner{g(t^{n+1}), \theta^{n+1}}
            -
            \inner{g(t^{n+1}), \theta^{n+1}_\star}
        }
        &
        \leq 
        \alpha^2
        C_g
        \norm{e_\theta^{n+1}}
        \leq
        C
        \tau^2
        .
    \end{align*}
    Combining the last four estimates and using Assumption~\ref{ass:constants} give
    \begin{align*}
        \frac{1}{
            \mathcal{E}(\bu^{n+1}, \theta^{n+1}) + \bar{C}
        }
        \cdot
        \abs{
            \frac{d\mathcal{E}}{dt}( \bu^{n+1}, \theta^{n+1}) -\frac{d\mathcal{E}}{dt}(\hu^{n+1}_\star, \theta^{n+1}_\star)
        }
        \leq
                        C (1+C_e) \tau^2
        .
    \end{align*}
    For the second right-hand side term, we have
    \begin{align*}
        \left|
        \frac{1}{\mcE(\theta^{j+1}_\star, \hu^{j+1}_\star) + \bar{C}}
        -
        \frac{1}{\mcE(\theta^{j+1}, \hu^{j+1}) + \bar{C}}
        \right|
        \leq
        C
        \left|
        \mcE(\theta^{j+1}, \hu^{j+1})
        -
        \mcE(\theta^{j+1}_\star, \hu^{j+1}_\star)
        \right|
        .
    \end{align*}
    We can estimate the denominator by $\bar{C}^{-2} \leq 1$. For the numerator, we combine the estimates 
    \begin{align*}
    \nicefrac{1}{2}
        \abs{
            \norm{\bu^{n+1}}^2
            - \norm{\hu^{n+1}_\star}^2
        }
                \leq 
        C (1 + C_\hu) C_e
        \tau^2
        ,
        \textmd{ and }
            \nicefrac{\bar{\alpha}^2}{2}
        \abs{
            \norm{\theta^{n+1}}^2
            - \norm{\theta^{n+1}_\star}^2
        }
                \leq 
        C (1 + C_\theta) C_e
        \tau^2
        ,
    \end{align*}
    with
    $
        \frac{d\mathcal{E}}{dt}
        (\hu^{n+1}_\star, \theta^{n+1}_\star)
        \leq C
    $, and obtain after simplifying constants
    \begin{align*}
    |e^{m+1}_r|
    \leq \tau C ( 1 + \tau (1 + C_e)  )
    \leq \tau C ( 1 + \tau \sqrt{1+C_0^2})
    .
    \end{align*}
\end{proof}
Finally, we show that all the assumptions on $\eta^n$ and $\xi^n$ are automatically satisfied for $\tau$ small enough.
Hence, the final error estimate of Theorem~\ref{thm:main-error-result} will turn out to be a simple Corollary.
\begin{lemma}[Stability of $\eta$ and $\xi$]\label{lem:boussinesq_etastability}
    Under Assumptions~\ref{ass:domain}, \ref{ass:constants}, \ref{ass:solution}, and \ref{ass:bootstrapping}, we can find a constant $\tau_\star > 0$, such that for all $\tau\leq\tau_\star$ we have $\tau$-independent bounds $c_\eta, c_\xi \geq 0$ such that
    \[
        c_\eta \leq \eta^n
        \quad\textmd{and}\quad
        c_\xi \leq \xi^n,
        \quad
        \textmd{ for all } n \leq N 
        .
    \]
    In addition, we can find a $\tau$-independent $C_0 > 0$, such that
    \[
        \abs{1-\eta^n} \leq C_0 \tau^2,
        \quad
        \textmd{ for all } n \leq N 
        ,
    \]
    where $C_0 = C_0 (T, C_\hu, C_\theta, \tilde{C}_e)$.
\end{lemma}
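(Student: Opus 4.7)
The plan is to close an induction over $n$ that breaks the circularity between Lemmas~\ref{lem:error-boussinesq}, \ref{lem:error-aux} (which assume the bounds on $\eta^n$, $\xi^n$, and $1-\eta^n$) and the present statement (which produces them). I fix candidate values $c_\xi = c_\eta = 1/2$ up front; the constant $C_0$ will be pinned down only after the induction step is analyzed.

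For the base case, the initialization in Scheme~\ref{scheme:fem-spatially-discrete} sets $\bu^i = \hu^i$, $\eta^i = 1$, and $r^i = \mathcal{E}(\theta^i,\bu^i) + \bar{C}$ for $i = 0,1$, so that $\xi^i = 1$ and all three bounds are trivial at $n \in \{0,1\}$. For the inductive step, suppose the three bounds hold for every $k \leq m-1$; then Lemma~\ref{lem:error-boussinesq} supplies $\|\nabla \be^m_\hu\|^2 + \|\nabla e^m_\theta\|^2 \leq \tilde{C}_e^2(1+C_0^2)\tau^4$ and Lemma~\ref{lem:error-aux} supplies $|e^m_r| \leq \tau\, C_r\bigl(1 + \tau\sqrt{1+C_0^2}\bigr)$, where crucially $\tilde{C}_e$ and $C_r$ are independent of $C_0$.

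The decisive identity is
\[
1 - \xi^m = \frac{\mathcal{E}(\theta^m,\bu^m) + \bar{C} - r^m}{\mathcal{E}(\theta^m,\bu^m) + \bar{C}} = \frac{[\mathcal{E}(\theta^m,\bu^m) - \mathcal{E}(\theta^m_\star,\hu^m_\star)] - e^m_r}{\mathcal{E}(\theta^m,\bu^m) + \bar{C}}.
\]
The energy difference in the numerator is controlled, via Cauchy--Schwarz, Poincar\'e, and the uniform $L^2$ bounds of Lemmas~\ref{lem:weak-stability} and~\ref{lem:stability-heat}, by $C(\|\be^m_\hu\| + \|e^m_\theta\|) \leq C \tilde C_e \sqrt{1+C_0^2}\,\tau^2$; the denominator is bounded below by $\bar{C}$. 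Therefore
\[
|1 - \xi^m| \leq \frac{C_r}{\bar{C}}\,\tau + C(C_0)\,\tau^2,
\]
and squaring yields $|1 - \eta^m| = (1-\xi^m)^2 \leq (C_r/\bar{C})^2\tau^2 + C(C_0)\tau^3$, where the $O(\tau^3)$ constant depends on $C_0$ only polynomially.

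Thus I pick $C_0 := 2(C_r/\bar{C})^2$, which is determined purely by the data since $C_r$ is $C_0$-independent, and then shrink $\tau_\star$ so that the $O(\tau^3)$ remainder is dominated by $(C_r/\bar{C})^2\tau^2$; this closes the bound $|1-\eta^m|\leq C_0\tau^2$ at step $m$. A further reduction of $\tau_\star$ makes $|1-\xi^m|\leq 1/2$, hence $\xi^m \geq c_\xi = 1/2$ and $\eta^m = 1-(1-\xi^m)^2 \geq 3/4 \geq c_\eta$. The main obstacle is precisely the feedback loop through $C_0$ inside $C_e$ and inside the error of $r$; the argument succeeds because the leading $O(\tau)$ coefficient in $|1-\xi^m|$ is $C_r/\bar{C}$, which is $C_0$-free, so a single up-front choice of $C_0$ absorbs the nonlinear feedback provided $\tau$ is small enough. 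Once the bounds are established for all $n\leq N$, Theorem~\ref{thm:main-error-result} follows by combining Lemmas~\ref{lem:weak-stability}, \ref{lem:error-boussinesq}, and~\ref{lem:error-aux}.
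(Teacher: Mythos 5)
Your proof is correct and follows essentially the same route as the paper's: an induction that feeds the hypothesized bounds into Lemmas~\ref{lem:error-boussinesq} and~\ref{lem:error-aux}, decomposes $1-\xi^m$ into the energy mismatch plus $e_r^m$, and closes because the leading $O(\tau)$ coefficient is $C_0$-independent. The only cosmetic difference is that the paper phrases the induction hypothesis as $|1-\xi^n|\leq\sqrt{C_0}\,\tau$ (equivalent to your $|1-\eta^n|\leq C_0\tau^2$ since $1-\eta^n=(1-\xi^n)^2$) and chooses $C_0=(2\hat{C})^2$ with $\hat{C}$ absorbing both the $O(\tau)$ and $O(\tau^2)$ coefficients, rather than isolating the leading term as you do.
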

\begin{proof}
        As in the original derivation~\cite{huang2023stability} for the proof of Navier--Stokes, we proceed with a proof by induction, for which we assume
    \[
        \abs{1 - \xi^n} \leq \sqrt{C_0} \tau, \quad\textmd{holds for all}\quad n \leq m-1
    \]
    and show the hypothesis that
    \[
        \abs{1 - \xi^{m}} \leq \sqrt{C_0} \tau
    \]
    for some constant $C_0 > 1$.
    Due to our initial conditions, we have
    \[ \abs{1 - \xi^0} = 0 \leq \sqrt{C_0} \tau \]
    and can immediately start the induction process.

    We choose our time step size
    \[
        \tau \leq \min\left\{ \frac{1}{2C_0}, 1\right\},
    \]
    and obtain for 
    \[
    \xi^n = 1 + (\xi^n-1) \leq 1 + \abs{1-\xi^n} \leq 1 + \sqrt{C_0} \tau
    \quad
    \textmd{ and }
    \quad
    \xi^n \geq 1 - \abs{1-\xi^n} \geq 1 - \sqrt{C_0} \tau
    \]
    the bounds from the induction hypothesis
    \[
        c_\xi :=  \frac{1}{2} \leq 1 -  \frac{1}{2\sqrt{C_0}} < \abs{\xi^n} < 1 +  \frac{1}{2\sqrt{C_0}} \leq \frac{3}{2} =: C_\xi,
        \quad \forall n \leq m-1,
    \]
    where the first and last inequality are due to $C_0 > 1$.
    Similarly for 
    \[\abs{1 - \eta^n} = \abs{1 - (1 - (1 - \xi^n)^2)}= \abs{1 - \xi^n}^2 \leq C_0 \tau^2 \leq \tau / 2\] we obtain
    \begin{align*}
        c_\eta :=
        \frac{1}{2}
        \leq
        1 - \frac{\tau}{2}
        \leq
        \abs{\eta^n}
        \leq
        1 + \frac{\tau}{2}
        \leq
        \frac{3}{2}
        =: C_\eta
        ,
        \quad \forall n \leq m-1,
    \end{align*}
    where we used the bound on $\tau$.
    Note that $c_\xi$ and $c_\eta$ are all independent of $C_0$. We therefore fulfill the assumptions of Lemma~\ref{lem:stability-heat}, Lemma~\ref{lem:stability-stokes}, Lemma~\ref{lem:error-boussinesq} and Lemma~\ref{lem:error-aux} and get constants $C_\hu$, $C_\theta$, and $\tilde{C}_e$ independent of $C_0$.
    It follows that
    \begin{align*}
        \abs{1-\xi^m}
        &
        =    
        \abs{1-\frac{r^m}{\mathcal{E}(\bu^{m}, \theta^{m}) + \bar{C}}}
        \\
        &
        \leq 
        \abs{\frac{r^m_\star}{\mathcal{E}(\hu^{m}_\star, \theta^{m}_\star) + \bar{C}}-\frac{r^m}{\mathcal{E}(\hu^{m}_\star, \theta^{m}_\star) + \bar{C}}}
        +
        \abs{\frac{r^m}{\mathcal{E}(\hu^{m}_\star, \theta^{m}_\star) + \bar{C}}-\frac{r^m}{\mathcal{E}(\bu^{m}, \theta^{m}) + \bar{C}}}
        \\
        &
        \leq 
        \abs{e_r^m}
        +
        M \abs{\mathcal{E}(\hu^{m}_\star, \theta^{m}_\star) - \mathcal{E}(\bu^{m}, \bar{\theta}^m)}
        ,
    \end{align*}
    where we used $\mathcal{E}(u,\theta) +\bar{C} \geq 1$, and the boundedness of $r^{m}$ due to the weak stability result (Lemma~\ref{lem:weak-stability}).
    \begin{align*}
        \abs{
            \frac{1}{2} \norm{ \bu^{m} }^2
            -\frac{1}{2} \norm{ \hu^{m}_\star }^2
        }
        \leq C (1 + C_\hu) \norm{\be_\hu^m}
        \leq C (1 + C_\hu) \tilde{C}_e \sqrt{1 + C_0^2} \tau^2,
    \end{align*}
    and similarly
    \begin{align*}
        \abs{
            \frac{1}{2} \norm{ \theta^{m} }^2
            -\frac{1}{2} \norm{ \theta^{m}_\star }^2
        }
        \leq C (1 + C_\theta) \norm{e_\theta^m}
        \leq C (1 + C_\theta) \tilde{C}_e \sqrt{1 + C_0^2} \tau^2,
    \end{align*}
    we have
    \begin{align*}
        \abs{1-\xi^m}
        &
        \leq 
        \tau (C_r + \tau M C(1 + C_\hu + C_\theta) \tilde{C}_e \sqrt{1 + C_0^2} )
                        \leq 
        \tau \hat{C} \left(1 + \tau\sqrt{1 + C_0^2} \right)
        .
    \end{align*}
    Note that by construction, $\hat{C}$ is still independent of the choice of $C_0$,
    and without loss of generality, we can assume $\hat{C} > 1$ by enlarging it when necessary.
    We now set $C_0 := (2 \hat{C})^2$ and assume that $\tau \leq \frac{1}{\sqrt{1+C_0^2}} = \frac{1}{\sqrt{1+16\hat{C}^4}}$.

    Thus, for $\tau_\star = \min\{1/(4\hat{C})^2, (1+(2\hat{C})^4)^{-1/2}, 1\}$ we get by induction $\abs{1-\xi^m} \leq 2 \hat{C} \tau = \sqrt{C_0} \tau$ for all $m \leq N$ and have $c_\eta = :1/2 \leq  {\eta^n} \leq C_\eta =: 3/2$ and $C_0 := 2 \hat{C}$ by the previous discussion.
\end{proof}
\section{Numerical results}\label{sec:numerical}
In this subsection, numerical examples illustrate the order of convergence in time, the capability of the algorithm to handle turbulences the scaling of the implementation, and the limitations of our method. All the examples are implemented both in FEniCS~\cite{logg2012automated} to allow an easy modification by the reader, as well as in the high-performance library HyTeG~\cite{kohl2019hyteg}, to enable efficient 3D simulations on fine resolutions.

As a spatial discretization, we use, if not stated otherwise, for the velocity and pressure the $P^2$-$P^1$ Taylor--Hood pairing consisting of continuous piecewise quadratic and piecewise linear polynomials.
For the temperature, we use $H^1$-conforming quadratic finite elements $P^2$.

\subsection{2D Benchmarks}

\paragraph{Convergence rates}
\begin{table}[htb!]
    \centering
    \begin{tabular}{c|cc|cc|cc|cc}
    & \multicolumn{2}{c|}{$\bu$}
    & \multicolumn{2}{c|}{${\hu}$}
    & \multicolumn{2}{c|}{${p}$}
    & \multicolumn{2}{c}{${\theta}$}
    \\
    \hline
    i & 
    error  & rate
    & error  & rate
    & error  & rate
    & error  & rate
    \\
    \hline
    \multicolumn{9}{l}{$P^2$-$P^1$}
    \\
    \hline
    0&      \num[round-mode=figures,round-precision=2]{4.225370e-02}&       -&      \num[round-mode=figures,round-precision=2]{1.123110e-01}&       -&      \num[round-mode=figures,round-precision=2]{7.974690e-02}&  -&      \num[round-mode=figures,round-precision=2]{3.385850e-03}&       -\\ 
    1&      \num[round-mode=figures,round-precision=2]{1.132130e-02}&       \num[round-mode=figures,round-precision=2]{3.73222}&    \num[round-mode=figures,round-precision=2]{2.047520e-02}&       \num[round-mode=figures,round-precision=2]{5.4852}&        \num[round-mode=figures,round-precision=2]{2.099740e-02}&       \num[round-mode=figures,round-precision=2]{3.79794}&    \num[round-mode=figures,round-precision=2]{6.682540e-04}&  \num[round-mode=figures,round-precision=2]{5.06671}\\ 
    2&      \num[round-mode=figures,round-precision=2]{2.885370e-03}&       \num[round-mode=figures,round-precision=2]{3.92369}&    \num[round-mode=figures,round-precision=2]{4.536860e-03}&       \num[round-mode=figures,round-precision=2]{4.51309}&       \num[round-mode=figures,round-precision=2]{5.211920e-03}&       \num[round-mode=figures,round-precision=2]{4.02874}&    \num[round-mode=figures,round-precision=2]{1.586000e-04}&  \num[round-mode=figures,round-precision=2]{4.21345}\\ 
    3&      \num[round-mode=figures,round-precision=2]{7.249760e-04}&       \num[round-mode=figures,round-precision=2]{3.97995}&    \num[round-mode=figures,round-precision=2]{1.074750e-03}&       \num[round-mode=figures,round-precision=2]{4.2213}&        \num[round-mode=figures,round-precision=2]{1.299390e-03}&       \num[round-mode=figures,round-precision=2]{4.01104}&    \num[round-mode=figures,round-precision=2]{3.878350e-05}&  \num[round-mode=figures,round-precision=2]{4.08938}\\ 
    4&      \num[round-mode=figures,round-precision=2]{1.814690e-04}&       \num[round-mode=figures,round-precision=2]{3.99504}&    \num[round-mode=figures,round-precision=2]{2.619140e-04}&       \num[round-mode=figures,round-precision=2]{4.10346}&       \num[round-mode=figures,round-precision=2]{3.251540e-04}&       \num[round-mode=figures,round-precision=2]{3.99623}&    \num[round-mode=figures,round-precision=2]{9.594760e-06}&  \num[round-mode=figures,round-precision=2]{4.04215}\\ 
    5&      \num[round-mode=figures,round-precision=2]{4.538050e-05}&       \num[round-mode=figures,round-precision=2]{3.99884}&    \num[round-mode=figures,round-precision=2]{6.466910e-05}&       \num[round-mode=figures,round-precision=2]{4.05006}&       \num[round-mode=figures,round-precision=2]{8.597810e-05}&       \num[round-mode=figures,round-precision=2]{3.78183}&    \num[round-mode=figures,round-precision=2]{2.386480e-06}&  \num[round-mode=figures,round-precision=2]{4.02047}
    \\
    \hline
    \multicolumn{9}{l}{$P^2$-$P^2$} \\
    \hline
    0&      \num[round-mode=figures,round-precision=2]{4.225370e-02}&       -&      \num[round-mode=figures,round-precision=2]{1.123110e-01}&       -&      \num[round-mode=figures,round-precision=2]{7.975200e-02}&  -&      \num[round-mode=figures,round-precision=2]{3.385850e-03}&       -\\ 
    1&      \num[round-mode=figures,round-precision=2]{1.132130e-02}&       \num[round-mode=figures,round-precision=2]{3.73223}&    \num[round-mode=figures,round-precision=2]{2.047520e-02}&       \num[round-mode=figures,round-precision=2]{5.4852}&        \num[round-mode=figures,round-precision=2]{2.099940e-02}&       \num[round-mode=figures,round-precision=2]{3.79782}&    \num[round-mode=figures,round-precision=2]{6.682540e-04}&  \num[round-mode=figures,round-precision=2]{5.06672}\\ 
    2&      \num[round-mode=figures,round-precision=2]{2.885370e-03}&       \num[round-mode=figures,round-precision=2]{3.9237}&     \num[round-mode=figures,round-precision=2]{4.536850e-03}&       \num[round-mode=figures,round-precision=2]{4.51309}&       \num[round-mode=figures,round-precision=2]{5.213760e-03}&       \num[round-mode=figures,round-precision=2]{4.02769}&    \num[round-mode=figures,round-precision=2]{1.586000e-04}&  \num[round-mode=figures,round-precision=2]{4.21345}
    \end{tabular}
    \caption{
    Errors and rates with the norm in $L^2\big((0,T); L^2(\Omega)\big)$ on a sufficiently refined grid ($256\times 256$), with time step widths of $\tau = T / 2^{i+4}$. {\bf Top:} $P^2$-$P^1$ Taylor--Hood pairing for the velocity and pressure. {\bf Bottom:} $P^2$-$P^2$ pairing for velocity and pressure.
    }
    \label{tab:convergence}
\end{table}
To verify the proven convergence rates, 
we consider the Boussinesq equation \eqref{eq:gen:heat} and \eqref{eq:gen:momentum} in the region $\Omega=(0,1)^2$ and the end time $T=\pi$ with the manufactured solution
\begin{align*}
    \hu(t,x,y) & =
    \sin(t)
    \,
    \left(
    \sin^2(2\pi x)\sin(2\pi y)\cos(2\pi y),\;
    \sin(2\pi x)\cos(2\pi x)\sin^2(2\pi y)
    \right)^T, 
    \\
    p(t,x,y)   & =\sin(t)\sin(2\pi x)\sin(2\pi y), \quad\textmd{and}     \\
    \theta(t,x,y)   & =\sin(t)\sin(2\pi x)\sin(2\pi y).
\end{align*}
Note that $\nabla\cdot \hu=0$, and that $u$ and $\theta$ are zero on $\partial\Omega$. 
We choose $\hf_2(\theta)=(\theta, 0)^T$, and $\hf_1$ and $g$ such that we obtain the manufactured solution.

In Table~\ref{tab:convergence}, the error for $\bu$, $\hu$, $p$ and $\theta$ for the $L^2\big((0,T);L^2(\Omega)\big)$-norm, and the resulting convergence rates are given for a time discretization with $l=1$ and $k=3$.
All errors approach the theoretically proven second-order convergence rate, $\hu$ and $\theta$ from above, while $\bu$ and $p$ approach it from below.
As expected, the error for the unscaled field $\bu$ is slightly better than for the by $\eta$ rescaled velocity field $\hu$.
The slight drop in the rate of $p$ is due to the spatial discretization since, at this point, the spatial error starts to dominate the pressure $p$, which is only approximated by continuous piecewise linear polynomials. 
Changing this space to $P^2$ yields similar convergence rates as before. This suggests that, in practice, inf-sup stability is not necessary for the scheme to work, however, the scheme does not benefit from a higher best approximation order in $p$.

\begin{figure}[htb]
    \centering
    \includegraphics[width=\textwidth]{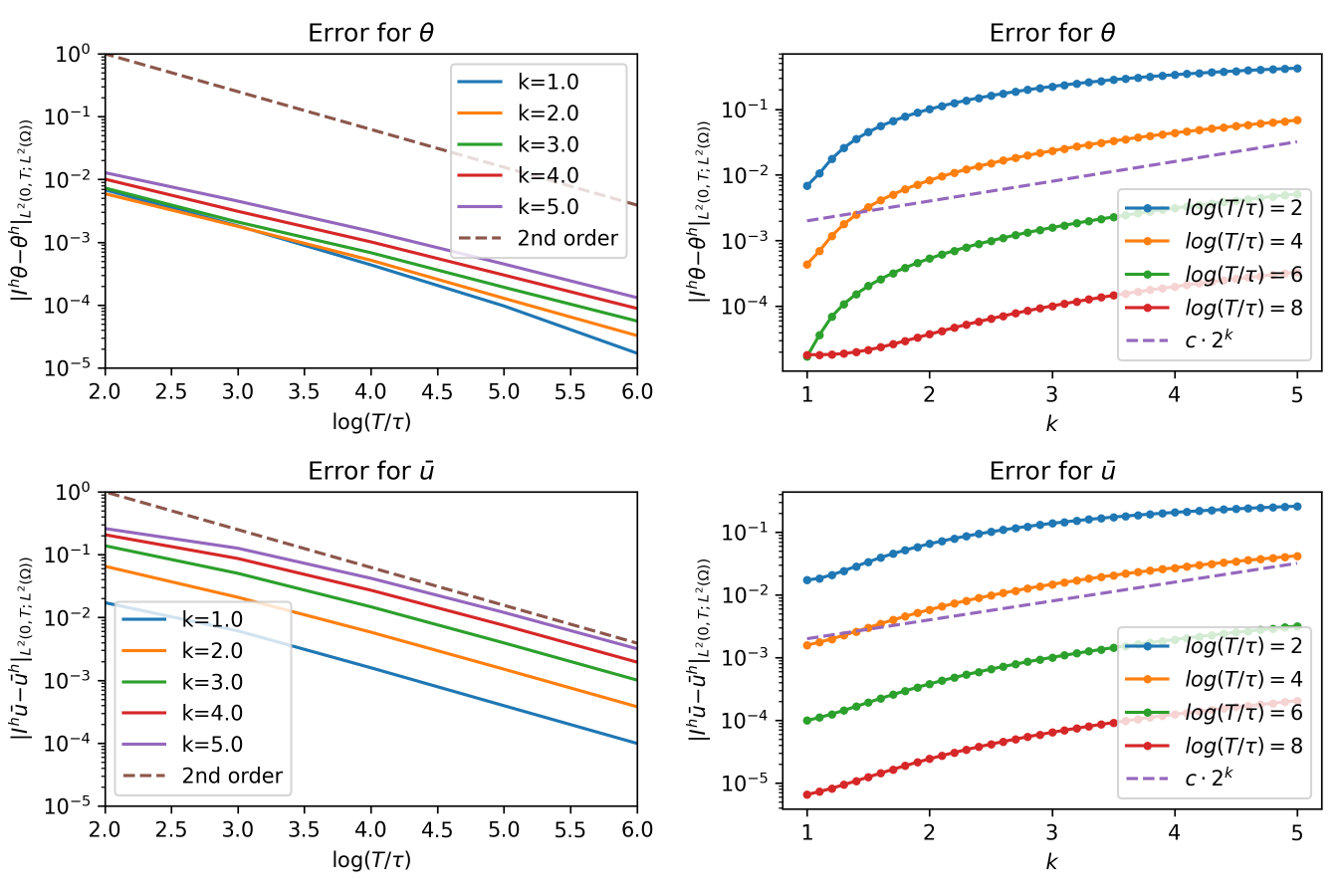}
    \caption{
    The $L^2\big((0,T);L^2(\Omega)\big)$-error for real-valued $k$ and different time step sizes, and a spatial grid of $256\times 256$.
    }
    \label{fig:combi-pic}
\end{figure}
In Figure~\ref{fig:combi-pic}, we present a more detailed convergence study using different values for $k$ and time step widths.
On the left, we see that a small value in $k$ consistently leads to smaller errors for all fields $\bu$, and $\theta$ with a difference in the error up to one order of magnitude.
Hence, we should choose $k$ as small as possible from a theoretical point of view to obtain the smallest errors, i.e., setting $k=3$ and $l=1$.
On the right, we have a real-valued $k$.
Especially the error of the velocity $\hu$ reacts strongly to small values for $k$, while the effect is less prominent for large $k$.
The temperature $\theta$ is not affected as strongly by smaller by $k$.

\paragraph{2D-Benchmark}
As a less synthetic example, we use a simple Marsigli flow benchmark, which shows that the algorithm can handle turbulence and allows comparisons with other time discretizations~\cite{jiang2023unconditionally}.
For this, consider the Boussinesq equation \eqref{eq:boussinesq:heat}, \eqref{eq:boussinesq:momentum} and \eqref{eq:boussinesq:mass} on $\Omega=(0,8)\times(0,1)$ with $T=10$, $\text{Re}=5000$, $\text{Ri}=4$ and $\text{Pr}=1$. As boundary conditions, we use $\hu|_{\partial\Omega}=\mathbf{0}$ and $\nicefrac{\partial \theta}{\partial \vec{n}}|_{\partial\Omega}=0$, and as initial conditions 
\[
\hu(0,x)=\mathbf{0},\quad p(0,x)=0,\quad \textmd{and} \quad
\theta(0,x)=\begin{cases}\nicefrac{3}{2}&\textmd{for } x_1<4\\1&\text{otherwise}\end{cases},
\quad \textmd{for} \quad x \in \Omega.
\]
Since the velocity and pressure are initially zero, they only change over time through the right-hand side of the velocity equation. The variation in the initial temperature leads to a larger force upwards on the left side of the domain, pushing the higher temperature to the right side of the domain. Mass balance leads to the lower temperature being pushed onto the lower part of the domain to the left side of the domain. The high Reynolds number creates turbulences. The result with a spatial discretization of $h=1/64$ and a temporal discretization of $\tau=T/2^{17}$ can be seen in Figure~\ref{fig:marsigli} at the time points $t=2$ and $8$. Streamlines to the corresponding time points are visualized in the lower part of this figure.

\begin{figure}
    \centering
    \includegraphics[width=\textwidth]{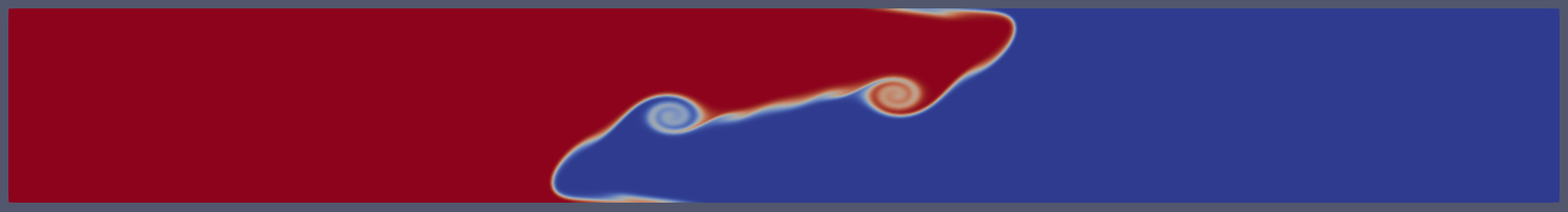}\\
            \includegraphics[width=\textwidth]{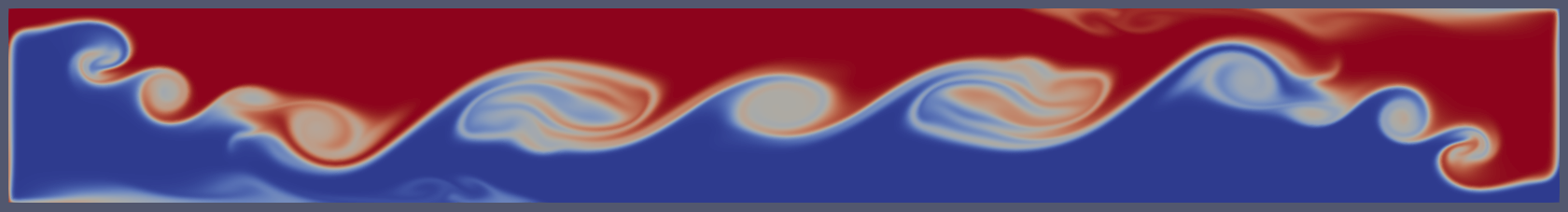}\\
    \includegraphics[width=\textwidth]{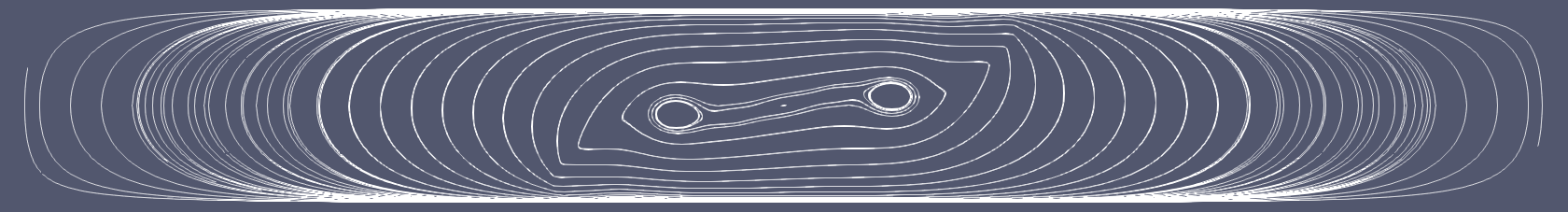}\\
            \includegraphics[width=\textwidth]{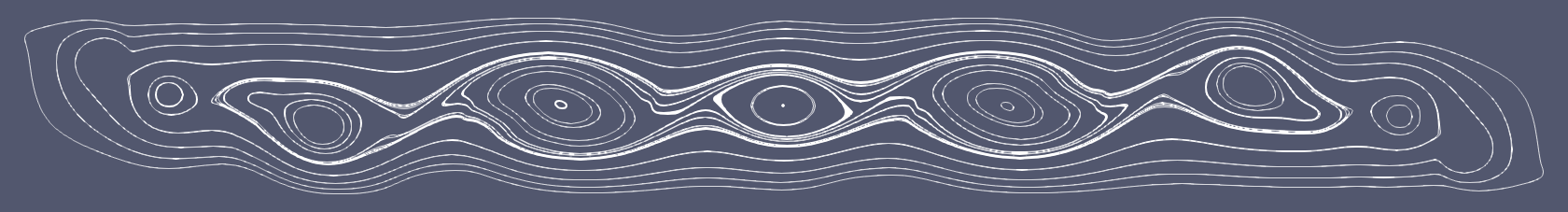}
    \caption{The temperature (top) and the velocity (bottom) for the Marsigli flow at $t=2,8$ with $h=\frac{1}{64}$ and $\tau=\frac{T}{2^{17}}$ illustrated via the color scale red $=1.5$ and blue $=1.0$ and particle streamlines.}
    \label{fig:marsigli}
\end{figure}

\paragraph{Stabilization in space}
While we have shown stability for a semi-discretization in time, which is spatially continuous, we cannot expect stability for a completely explicit advection term in a discretized space.
Hence, we introduce the following spatial stabilization terms mimicking the first derivative in time, scaled with $h$, to retain the convergence order in time.
As possible candidates we test the additional terms $S_a = 2 \tau c_s h \cdot (-\Delta) D^k(\bu^{n+1}, {\hu}^n, {\hu}^{n-1})$ and $S_b =2 \tau c_s h \cdot (-\Delta) (\bu^{n+1} - \hu^{n-1})$, with local mesh width $h$ and problem dependent scaling parameter $c_s = \nicefrac{1}{2}$.
As a benchmark problem, we use the second example in~\cite{huang2023stability}, i.e., $\Omega = (-1,1)^2$ with initial condition 
\[
\hu(0,x) = \begin{pmatrix}(1+x_1)(1-x_2) \tanh (\rho( \alpha(x_2) x_2+0.5)) \\ \delta \sin(\pi x_1)\end{pmatrix}, \textmd{ where } \alpha(x_2) = \begin{cases}
    +1, & x_2 \leq 0 \\ -1 & x_2 > 0
\end{cases}
,
\]
for the velocity where we set $\rho=100$, and  $\delta=0.5$.
The initial pressure $p(0,x)=0$ and temperature $\theta(0, x) = 0$ are set to zero.
As physical problem parameters, we set $\mathbf{f} = 0$, $\nu = 0.005$, and deactivate the temperature with $\mathbf{g} = 0$.
\begin{figure}[htb]
    \centering
    \includegraphics[width=0.85\linewidth]{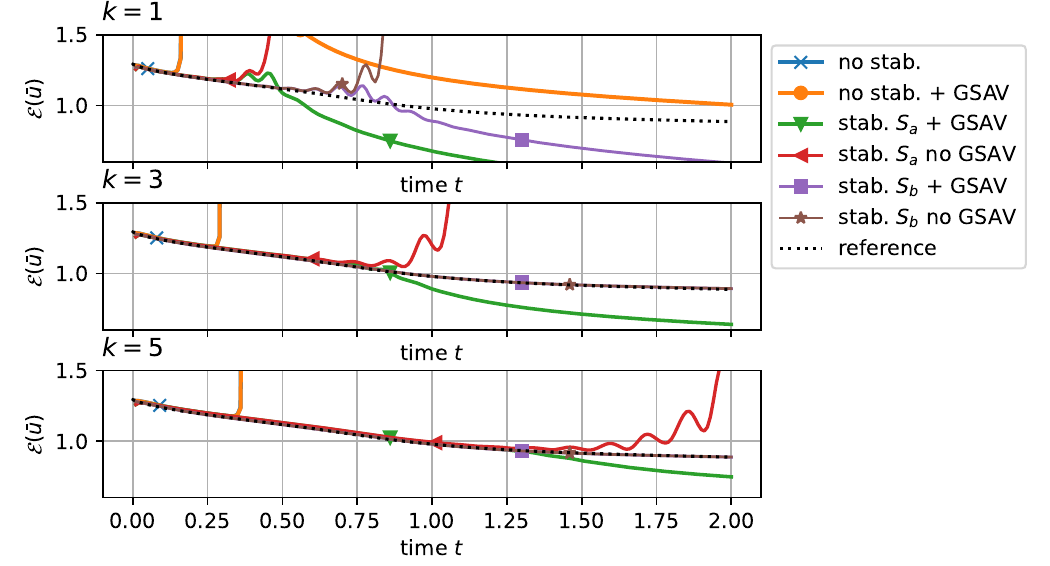}
    \caption{Energy over time for different $k$ and $\tau=10^{-2}$.}
    \label{fig:gsav-spatial-stabilization}
\end{figure}
The energy of the resulting simulations for $k=1,3$ and $5$ on a $64\times64$ grid is plotted against time in Figure~\ref{fig:gsav-spatial-stabilization} for a much larger time step width of $\tau=10^{-2}$, compared to the $\tau = \num{2e-3}$ in~\cite{huang2023stability}.
As expected, increasing $k$ always leads to more stable schemes independent of the stabilizations used.
Using no stabilization at all leads to schemes where the energy $\mathcal{E}$ explodes, which happens earlier when $k$ is smaller.
Adding the GSAV stabilization in time limits the growth of $\bu$ by deactivating the advective term.
Using the stabilization term $S_a$ without and with GSAV leads to either an exploding or strongly damped energy.
The term $S_b$ sufficiently stabilizes the scheme for $k=3$ and $5$, while it is not stable for $ k=1$.

All in all, the GSAV approach alone is insufficient to achieve stability using finite elements. However, classical in-space stabilization techniques can recover stability in the space-time domain.

\subsection{3D Benchmark}
The third example shows that the implemented code scales well and can handle more complex scenarios in 3D. For this, consider the Boussinesq equation \eqref{eq:boussinesq:heat}, \eqref{eq:boussinesq:momentum} and \eqref{eq:boussinesq:mass} with $\text{Re}=5000$, $\text{Ri}=4$ and $\text{Pr}=1$ in the domain $\Omega=(0,1)^3$ and $T=2.5$. As initial condition, we use $\hu=\mathbf{0}$, $p=0$ and
\[
\theta(0,x)=\frac{5}{4}+\frac{1}{4}\cdot\tanh\Big(3-(x_1^2+x_2^2)+2e^{-50\big((x_1-\frac12)^2+(x_2-\frac12)^2\big)}-10x_3\Big).
\]
This is a smooth version of the step function that is $1$ if $z<f(x,y)$ and $\nicefrac{3}{2}$ else, where $\hf$ is a constant at $0.3$ with a quadratic decay in $x$- and $y$-direction as well as a Gaussian bump in the center of $(0,1)^2$. The contour plot,  for $\theta(0,x,y,z)=\nicefrac{5}{4}$, is depicted on the left of Fig.~\ref{fig:dd}.

\begin{figure}[htb]
\centering
\includegraphics[width=\textwidth]{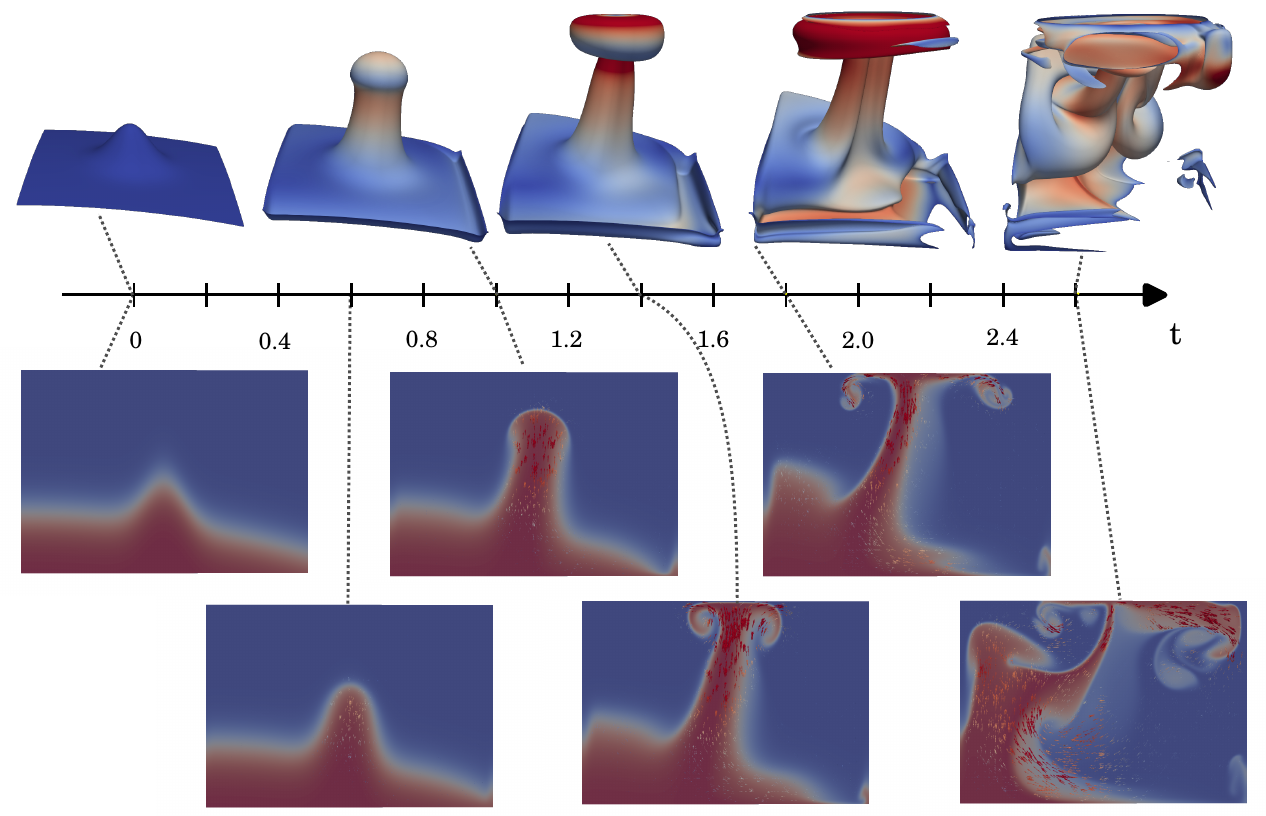}
\caption{
Simulation results for the temperature with the times, going from left to right.
Above the time-axis: contour plots with $\theta = 1.25$.
Below the time-axis: Slice in the $x=y$ plane with $\theta$ as background color and the $\hu$ indicated by small vectors.
}
\label{fig:dd}
\end{figure}
The grid size is given by $\tau=\nicefrac{T}{2^{14}}\approx1.5\cdot10^{-4}$ and $h=2^{-9}$ resulting in approximately $2.2\cdot10^9$ spatial unknowns for each of the $2^{14}$ time steps (consisting of $5.4\cdot10^8$ unknowns for each component of $\hu$ and for $\theta$ as well as $6.8\cdot10^7$ unknowns for $p$). We use a geometric multi-grid solver with Gauss-Seidel as smoother and the CG method as a coarse grid solver. The simulation is run on 256 cores with $3.1$GHz as the maximal CPU frequency. 

The results of this simulation are shown in two different ways: Above the time-axis, the contour plot of the temperature with $\theta=1.25$ is shown with a coloring of the magnitude of the velocity; and below the time-axis, the plane $x=y$ is shown with the temperature (in the range of $1.0$ to $1.5$) with arrows indicating in both, size and color, the velocity $\hu$. 

With the force on the velocity being linear in the temperature in the $z$-direction, the Gaussian bump creates the most upward force, resulting in a plume-like structure forming between $t=0.8$ and $t=1.0$. As soon as the plume hits the upper boundary with imposed no-slip boundary conditions, the plume spreads in the $xy$-direction, resulting in faster (and thinner) twirls ($t=1.6$), which lead to even thinner twirls ($t=2.2$ until $t=2.6$). Meanwhile, the quadratic decay of the initial state in both the $x$- and $y$-direction leads to an imbalance in upward force, resulting in the cooler temperature being pushed down near $(x,y)=(1,1)$. Note that the boundary condition for $\hu$ is a no-slip condition everywhere with the obvious results in the simulation.

\section{Conclusion}\label{sec:outlook}
We have introduced a time-stepping scheme for the Boussinesq equation based on the discretization in~\cite{huang2023stability} for the Stokes equation using BDF(k) and GSAV to stabilize 
the algorithm. 
We have reformulated it to be suitable for a finite element discretization and presented error estimates for the time-stepping scheme showing second-order convergence in time for all BDF($k$) schemes with $k\geq 3$.

Further, we have verified these rates numerically and demonstrated the scheme's applicability to the Marsigli flow in 2D and 3D.
Even though the asymptotic behavior is independent of $k$, small values in $k$ lead to smaller error constants.
Finally, we demonstrated that GSAV alone is not enough for a truly robust algorithm, though spatial stabilizations can improve the algorithm's stability. 
Enlarging $k$ hereby yields additional stability.
Hence, there is a trade-off between choosing $k$ as large as possible for maximum stability and making it as small as possible for a minimal error constant.

\appendix
\section{Appendix}
\subsection{Proofs}\label{sec:app:proofs}
\begin{delayedproof}{lem:time-derivative-and-extrapolation}
    With a Taylor expansion around $t_{n+k}$, we immediately obtain the integral remainder terms
    \begin{align*}
        \zeta_{n+1,k}(v)
        :=                  &
        k \int^{t^{n+1}}_{t^{n+k}} \partial^2_t{v}(s) (t^{n+1}-s)ds
        -
        (k-1) \int^{t^{n}}_{t^{n+k}} \partial^2_t{v}(s) (t^{n}-s)ds
        \\
        \Xi_{n+1}(v)
        :=                  &
        \frac{1}{2}
        \left(
        (2k+1) \int^{t^{n+1}}_{t^{n+k}}
        \partial^3_t v(s)(t^{n+1}-s)^2 ds
                                        - 4k \int^{t^{n}}_{t^{n+k}}
        \partial^3_t v(s)(t^{n}-s)^2 ds
        \right.
        \\ &
        \qquad
        \left.
        +(2k-1) \int^{t^{n-1}}_{t^{n+k}}
        \partial^3_t v(s)(t^{n-1}-s)^2 ds,
        \right)
        \quad\textmd{and}
        \\
        \Lambda_{n+1}(r) := & \int^{t^{n}}_{t^{n+1}} \partial^2_t{r}(s)(t_n - s) ds
        .
    \end{align*}
    For the integral estimate, we start with the first term $\zeta_{n+1,k}(v)$ and get
    \begin{align*}
    \zeta_{n+1,k}(v) = 
    \int^{t^{n+1}}_{t^{n+k}}
    \partial^2_{t}{v}(s) (t^{n+k} - s) ds
    -
    (k-1)
    \int^{t^{n}}_{t^{n+1}}
    \partial^2_{t}{v}(s) (t^{n} - s) ds.
    \end{align*}
    Hence, applying the norm, using Cauchy-Schwarz and Young, on both terms yields 
    \begin{align*}
    \norm{ \zeta_{n+1,k}(v) }^2_V
    \leq
    \frac{2}{3}
    \tau^3 (k-1)^3
    \norm{\partial^2_t v}^2_{L^2((t^{n+1}, t^{n+k}), V)}
    +
    \frac{2}{3}
    (k-1)^2
    \tau^3
    \norm{\partial^2_t v}^2_{L^2((t^{n}, t^{n+1}), V)}
    .
    \end{align*}
    Further, summing up over n and using that the integrals of the first term overlap at most $k-1$ times yields
    \begin{align*}
    \sum_{n=0}^{m-1}
    \norm{ \zeta_{n+1,k}(v) }^2_V
    \leq
    \frac{2}{3}
    \tau^3 
    (k-1)^2 
    \max( (k-1) k , 2 )
    \norm{\partial^2_t v}^2_{L^2((0,T^*), V)}
    ,
    \end{align*}
    where we further used $ ((k-1)^2 + 1) \leq (k-1)k$ for $k \geq 2$.
    The proof for $\Xi_{n+1}(v)$  works analogously.
We note that smaller upper bounds can be obtained by not independently bounding the terms. However, we omit this as the constants are of minor importance.
\end{delayedproof}

\begin{delayedproof}{lem:abb-equation-2}
    We first show the equality
    \begin{align*}
        (\tD^k v^{n+1}, \delta^{k+1} v^{n+1})_V
        = &
        a(\norm{v^{n+1}}^2_V - \norm{v^n}^2_V)
        +
        (\norm{b v^{n+1} - c v^n}^2_V
        -\norm{b v^n - c v^{n-1}}^2_V)
        +
        \nonumber
        \\
          & \quad
        + d \norm{v^{n+1} - 2 v^n + v^{n-1}}^2_V
        + 2 \norm{v^{n+1} - v^n}^2_V,
    \end{align*}
    where the constants $a, b, c$ and $d$ are given by
    \begin{align*}
        a = \frac{3}{2 (k+1)}, \quad
        c = \sqrt{k^{2} + \frac{k}{2} - \frac{1}{2}}, \quad
                b = c^{-1}\left(k^{2} + \frac{3 k}{2} - 1\right),
        \quad\textmd{and}\quad
        d = c^2
        .
    \end{align*}
    Expanding and collecting the terms of the left-hand side, we get
    \begin{align*}
        4 k^{2} \norm{ v^{n}}^2_V
        + \left(- 2 k^{2} + k\right) \inner{ v^{n-1},  v^{n}}_V
        + \left(2 k^{2} + k - 1\right) \inner{ v^{n-1},  v^{n+1}}_V
        + \\
        + \left(- 6 k^{2} - 5 k\right) \inner{ v^{n},  v^{n+1}}_V
        + \left(2 k^{2} + 3 k + 1\right) \norm{ v^{n+1}}^{2}_V
        ,
    \end{align*}
    while the right-hand-side becomes
    \begin{align*}
        \left(- c^{2} + d\right) \norm{ v^{n-1}}^{2}_V
        + \left(- a - b^{2} + c^{2} + 4 d + 2\right) \norm{ v^{n}}^{2}_V
        + \left(2 b c - 4 d\right) \inner{ v^{n-1},  v^{n}}_V
        + \\
        + 2 d \inner{ v^{n-1},  v^{n+1}}_V
        + \left(- 2 b c - 4 d - 4\right) \inner{ v^{n},  v^{n+1}}_V
        + \left(a + b^{2} + d + 2\right) \norm{ v^{n+1}}^{2}_V
        .
    \end{align*}
    By comparing the individual terms, a straightforward matching shows
    \begin{align*}
        \inner{ \delta^k v^{n+1}, \delta^{k+1} v^{n+1} }_V
        =
        \hat{a}
        \norm{\delta^{k+1}v^{n+1}}^2_V
        + \hat{b} \norm{v^{n+1} + v^{n}}^{2}_V
        +  \left(\hat{d} + \hat{f}\right) \norm{v^{n+1}}^{2}_V
        - \hat{d} \norm{v^{n}}^{2}_V
        ,
    \end{align*}
    where
    \begin{align*}
        \hat{a}= \frac{4 k^{2} - 1 - \epsilon}{4 k^{2} + 4 k + 1},
        \quad
        \hat{b}= 
        \frac{k+1/2+k(1-\epsilon)}{(2k+1)^2},
        \quad
        \hat{d} = 
        \frac{k + \frac{1}{2} - \epsilon k}{2k+1},
        \quad
        \hat{f} = \epsilon.
    \end{align*}
    Expanding the left-hand side terms yields
    \begin{align*}
        \inner{\delta^k v^{n+1}, \delta^{k+1} v^{n+1}}_V
        =
        \norm{v^{n+1}}^{2}_V k \left(k + 1\right) + \inner{v^{n+1}, v^{n}}_V \left(1 - 2 k^{2}\right) + \norm{v^{n}}^{2}_V k \left(k - 1\right)
        ,
    \end{align*}
    while the right-hand side gives
    \begin{align*}
        \norm{v^{n+1}}^{2}_V \left(\hat{a} k^{2} + 2 \hat{a} k + \hat{a} + \hat{b} + \hat{d} + \hat{f}\right) + \inner{v^{n+1}, v^{n}}_V \left(- 2 \hat{a} k^{2} - 2 \hat{a} k + 2 \hat{b}\right) + \norm{v^{n}}^{2}_V \left(\hat{a} k^{2} + \hat{b} - \hat{d}\right)
        .
    \end{align*}
    Again, the result follows from a one-to-one comparison of the coefficients.
\end{delayedproof}

\begin{acknowledgements}
    The work of the second and third authors was supported by the Federal Ministry of
    Education and Research (BMBF) as part of the “Multi-physics
    simulations for Geodynamics on heterogeneous Exascale Systems” (CoMPS)
    project (FKZ 16ME0651) inside the federal research program on “High-Performance and
    Supercomputing for the Digital Age 2021-2024 – Research and
    Investments in High-Performance Computing”.
\end{acknowledgements}

\section*{Declaration}

\paragraph{Competing interests} The authors declare that they have no competing interests.

\bibliographystyle{spmpsci}      \bibliography{literature}   
\end{document}